\newcommand{\sig}{\lambda}
\newcommand{\entails}{\vdash}
\newcommand{\bbbn}{\mathbb{N}}
\newcommand{\bbbr}{\mathbb{R}}
\definecolor{shadecolor}{gray}{.85}%
\definecolor{tintedcolor}{gray}{.80}%
\definecolor{mytintedcolor}{gray}{.95}%
\newdimen\svparindent
\newcounter{tmpthm}
\newenvironment{maintheorem}[1]{\setcounter{tmpthm}{\value{theorem}}%
\setcounter{theorem}{#1}\addtocounter{theorem}{-1}%
\begin{theorem}}{\setcounter{theorem}{\value{tmpthm}}\end{theorem}}
\newenvironment{mytinted}{%
  \MakeFramed {\FrameRestore}}%
{\endMakeFramed}
{\endlist\end{mytinted}\egroup}
\newcommand{\Type}[2]{\tensor*[^{#1}_{\scriptscriptstyle\mathbf #2}]{\sf\textstyle Tp}{}}
\newtheorem{theorem}{Theorem}
\newtheorem{corollary}{Corollary}
\newtheorem{lemma}[theorem]{Lemma}
\theoremstyle{definition}
\newtheorem{definition}[theorem]{Definition}
\theoremstyle{remark}
\newtheorem{remark}[theorem]{Remark}
\newtheorem{problem}{Problem}
\newtheorem{conjecture}{Conjecture}
\begin{document}

\title[Modeling Limits in Hereditary Classes]{Modeling Limits in Hereditary Classes:\\ Reduction and Application to Trees}
\author{Jaroslav Ne{\v s}et{\v r}il}
\address{Jaroslav Ne{\v s}et{\v r}il\\
Computer Science Institute of Charles University (IUUK and ITI)\\
   Malostransk\' e n\' am.25, 11800 Praha 1, Czech Republic}
\email{nesetril@iuuk.mff.cuni.cz}
\thanks{Supported by grant ERCCZ LL-1201 
and CE-ITI P202/12/G061, and by the European Associated Laboratory ``Structures in
Combinatorics'' (LEA STRUCO)}

\author{Patrice Ossona~de~Mendez}
\address{Patrice~Ossona~de~Mendez\\
Centre d'Analyse et de Math\'ematiques Sociales (CNRS, UMR 8557)\\
  190-198 avenue de France, 75013 Paris, France
	--- and ---
Computer Science Institute of Charles University (IUUK)\\
   Malostransk\' e n\' am.25, 11800 Praha 1, Czech Republic}
\email{pom@ehess.fr}
\thanks{Supported by grant ERCCZ LL-1201 and by the European Associated Laboratory ``Structures in
Combinatorics'' (LEA STRUCO)}

\date{\today}

\subjclass[2010]{Primary  03C13 (Finite structures), 03C98 (Applications of model theory), 05C99 (Graph theory),  06E15 (Stone spaces and related structures), Secondary 28C05 (Integration theory via linear functionals)}

 \keywords{Graph \and Relational structure \and Graph limits \and Structural limits \and Radon measures \and Stone space \and Model theory \and First-order logic \and Measurable graph}

\begin{abstract}
Limits of graphs were initiated recently  in the two extreme contexts of dense and bounded degree graphs. 
This led to elegant limiting structures
called graphons and graphings. These approach have been unified and generalized by authors in a more general setting using a combination of analytic tools and model theory to ${\rm FO}$-limits (and $X$-limits) and to the notion of modeling. The existence of modeling limits was established for sequences in a bounded degree class and, in addition, to the case of classes of trees with bounded height and of graphs with bounded tree depth.
These seemingly very special classes is in fact a key step in the development of limits for more general situations. The natural obstacle for the existence of modeling limit for a monotone class of graphs is the nowhere dense property and it has been conjectured that this is a sufficient condition. Extending earlier results we derive several general results which present a realistic approach to this conjecture. As an example we then prove that the class of all finite trees admits modeling limits. 
\end{abstract}

\maketitle
\section{Introduction}
The study of limiting properties of large graphs have recently received a great attention, mainly in two
directions: limits of graphs with bounded degrees \cite{Benjamini2001} and limit of dense graphs \cite{Lov'asz2006}. 
These developments are nicely documented in  the recent monograph of Lov\'asz \cite{LovaszBook}. 
Motivated by a possible unifying scheme for the study of structural limits, we introduced the notion
of Stone pairing and ${\rm FO}$-convergence \cite{CMUC,NPOM1arxiv}.
Precisely, we proposed an approach 
based on the {\em Stone pairing} $\langle \phi,G\rangle$ of a first-order formula $\phi$ 
(with set of free variables ${\rm Fv}(\phi)$) and 
a graph $G$, which 
is defined by  following expression
$$
\langle \phi,G\rangle=
\frac{|\{(v_1,\dots,v_{|{\rm Fv}(\phi)|})\in G^{|{\rm Fv}(\phi)|}: G\models\phi(v_1,\dots,v_{|{\rm Fv}(\phi)|})\}|}
{|G|^{|{\rm Fv}(\phi)|}}.
$$
In other words, $\langle \phi,G\rangle$ is the probability that $\phi$ is satisfied in $G$ by a random assignment of vertices (chosen independently and uniformly in the vertex set of $G$) to the free variables of $G$.

Stone pairing induces a notion of convergence: a sequence of graphs $(G_n)_{n\in\bbbn}$ is {\em ${\rm FO}$-convergent} if, for every first order
formula $\phi$ (in the language of graphs), the values $\langle \phi, G_n\rangle$ converge
as $n\rightarrow\infty$.
In other words, $(G_n)_{n\in\bbbn}$ is ${\rm FO}$-convergent if
the probability that a formula $\phi$ is satisfied by the graph $G_n$ with a random assignment of vertices 
of $G_n$ to the free variables of $\phi$ converges as $n$ grows to infinity.

It is sometimes interesting to consider weaker notions of convergence, by restricting the set of considered formulas
to a fragment $X$ of ${\rm FO}$. In this case, we speak about {\em $X$-convergence} instead of ${\rm FO}$-convergence.
Of special importance are the following fragments:
\begin{table}[h!t]%
\begin{tabular}[C]{|l|p{.4\textwidth}|p{.4\textwidth}|}\hlx{hv}
Fragment&Type of formulas&Type of convergence\\ \hlx{vhhv}
${\rm QF}$&{\em quantifier free} formulas&{\em left convergence} \cite{Lov'asz2006}\\ \hlx{vhv}
${\rm FO}_0$&{\em sentences} (no free variables)&{\em elementary convergence}\\ \hlx{vhv}
${\rm FO}_p$&formulas with free variables in $\{x_1,\dots,x_p\}$&\\ \hlx{vhv}
${\rm FO}^{\rm local}$&{\em local} formulas (depending on a fixed 
distance neighborhood of the free variables)&\\ \hlx{vhv}
${\rm FO}_1^{\rm local}$&{\em local} formulas with single free variable&{\em local convergence} (if bounded degree) \cite{Benjamini2001}\\ \hlx{vh}
\end{tabular}
\caption{Fragments of specific importance.
}
\label{tab:frag}
\end{table}

Note that the above notions clearly extend to relational structures. Precisely,
if we consider relational structures with signature $\sig$, the symbols of the relations and constants
in $\sig$ define the  non-logical symbols of the vocabulary of the first-order language ${\rm FO}(\sig)$
associated to $\sig$-structures. Notice that if $\sig$ is at most countable then ${\rm FO}(\sig)$ is countable.
We have shown in \cite{CMUC,NPOM1arxiv} that every finite
relational structure $\mathbf A$ with (at most countable) signature $\lambda$ defines (injectively) a probability measure $\mu_{\mathbf A}$ on the standard Borel space $S(\mathcal B({\rm FO}(\lambda)))$, which is the Stone space of the Lindenbaum-Tarski algebra of first-order formulas (modulo logical equivalence) in the language of $\lambda$-relational structures.
In this setting, a sequence $(\mathbf A_n)_{n\in\bbbn}$ of $\lambda$-structures is ${\rm FO}$-convergent if and only 
if the sequence $(\mu_{\mathbf A_n})_{n\in\bbbn}$ of measures converge (in the sense of a weak-* convergence), and that
the uniquely determined limit probability measure $\mu$ is such that for every first-order formula $\phi$ it holds
$$\lim_{n\rightarrow\infty}\langle\phi,\mathbf A_n\rangle=\int_{S(\mathcal B({\rm FO}(\lambda)))} \mathbf 1_{K(\phi)}(T)\,{\rm d}\mu(T),$$
where $\mathbf 1_{K(\phi)}$ stands for the indicator function of the set $K(\phi)$ of the $T\in S(\mathcal B({\rm FO}(\lambda)))$
such that $\phi\in T$. Note that the space of probability measures on the Stone space of a countable Boolean algebra,
 equipped with the weak topology, is compact.

It is natural to search for a limit object that would more look like a relational structure. Thus we introduced in
\cite{NPOM1arxiv} --- as candidate for a possible limit object of sparse structures --- the notion of modeling,
which extends the notion of graphing introduced for bounded degree graphs.
Here is an outline of its definition.
A {\em relational sample space} is a relational structure $\mathbf A$ (with signature $\lambda$) 
with additional structure:
The domain $A$ of $\mathbf A$ of a relation sample space is a standard Borel space
(with Borel $\sigma$-algebra $\Sigma_{\mathbf A}$)
 with the property that every subset of $A^p$ that is first-order definable 
 in ${\rm FO}(\lambda)$
  is measurable (in $A^p$ with respect to the product $\sigma$-algebra). 
	A {\em modeling} is a relational sample space equiped with a probability measure (denoted $\nu_{\mathbf A}$).
	For brevity we shall use the same letter $\mathbf A$  for  structure, relational sample space, and modeling.
The definition of modelings allows us to extend Stone pairing naturally to modelings:
the {\em Stone pairing} $\langle \phi,\mathbf A\rangle$ of a first-order formula $\phi$ 
(with free variables in $\{x_1,\dots,x_p\}$) and 
a modeling $\mathbf A$, 
is defined by 
$$
\langle \phi,\mathbf A\rangle=\idotsint \mathbf 1_{\Omega_\phi(\mathbf A)}(x_1,\dots,x_p)\,{\rm d}\nu_{\mathbf A}(x_1)\dots{\rm d}\nu_{\mathbf A}(x_p),$$
where $\mathbf 1_{\Omega_\phi(\mathbf A)}$ is the indicator function of the {\em solution set} $\Omega_\phi(\mathbf A)$
of $\phi$ in $\mathbf A$, that is: 
$$\Omega_\phi(\mathbf A)=\{(v_1,\dots,v_p)\in A^p:\quad \mathbf A\models\phi(v_1,\dots,v_p).$$
Note that every finite structure canonically defines a modeling (with same universe, discrete $\sigma$-algebra, and
uniform probability measure) and that in the definition above matches the definition of Stone pairing of a formula
and a finite structure introduced earlier.

In the following, we assume that free variables of formulas are of the form $x_i$ with $i\in\bbbn$.
Note that the free variables need not to be indexed by consecutive integers.
For a formula $\phi$, denote by $\phi^\triangledown$ the formula
obtained by packing  the free variables of $\phi$: if the free variables of $\phi$
are $x_{i_1},\dots,x_{i_p}$ with $i_1<i_2<\dots<i_p$ then $\phi^\triangledown$ is obtained 
from $\phi$ by renaming $x_{i_1},\dots,x_{i_p}$ to $x_{1},\dots,x_{p}$.
 Although
$\Omega_\phi(\mathbf A)$ and $\Omega_{\phi^\triangledown}(\mathbf A)$ differ in general, it is clear
that they have same measure (as $\Omega_\phi(\mathbf A)$ can be obtained from $\Omega_{\phi^\triangledown}(\mathbf A)$
by taking the Cartesian product by some power of $A$, and then permuting the coordinates). Hence for every 
formula $\phi$ it holds
$$\langle\phi,\,\cdot\,\rangle=\langle\phi^{\triangledown},\,\cdot\,\rangle,$$
that is: the Stone pairing is invariant by renaming of the free variables.

The expressive power of the Stone pairing goes slightly beyond satisfaction statistics of first-order formulas.
In particular, we prove (see Corollary~\ref{cor:omega1}) that the Stone pairing $\langle\,\cdot\,,\mathbf A\rangle$ 
can be extended in a unique way to the infinitary language $\mathcal L_{\omega_1\omega}$, which is an extension of
${\rm FO}$ allowing countable conjunctions and disjunctions \cite{scott1958sentential,Tarski1958}. Note that this language is still complete,
as proved by Karp \cite{karp1964languages}. Although the compactness theorem does not hold for $\mathcal L_{\omega_1,\omega}$,
the interpolation theorem for $\mathcal L_{\omega_1,\omega}$ was proved by Lopez-Escobar \cite{Lopez-Escobar1965} and Scott's isomorphism theorem for $\mathcal L_{\omega_1,\omega}$ by Scott \cite{scott1965logic}. 
For a modeling $\mathbf A$ and an integer $p$, the $\mathcal L_{\omega_1,\omega}$-definable subsets of $A^p$ correspond to the smallest $\sigma$-algebra that contains all the first-order definable subsets of $A^p$ (see Lemma~\ref{lem:omega}).
According to the definition of a modeling, this means that all $\mathcal L_{\omega_1,\omega}$-definable sets of a modeling
are Borel measurable.

We say that a class $\mathcal C$ of structures {\em admits modeling limits} if for every ${\rm FO}$-convergent
sequence of structures $\mathbf A_n\in\mathcal C$ there is a modeling $\mathbf L$ such that
for every $\phi\in{\rm FO}$ it holds
$$\langle\phi,\mathbf L\rangle=\lim_{n\rightarrow\infty}\langle\phi,\mathbf A_n\rangle,$$
what we denote by $\mathbf A_n\xrightarrow{\rm FO}\mathbf L$.
More generally, for a fragment $X$ of ${\rm FO}$, we say that a class $\mathcal C$ of structures 
{\em admits modeling $X$-limits} if for every ${X}$-convergent
sequence of structures $\mathbf A_n\in\mathcal C$ there is a modeling $\mathbf L$ such that
for every $\phi\in X$ it holds $\langle\phi,\mathbf L\rangle=\lim_{n\rightarrow\infty}\langle\phi,\mathbf A_n\rangle$,
and we denote this by $\mathbf A_n\xrightarrow{X}\mathbf L$.

The following results have been proved in~\cite{NPOM1arxiv}:
\begin{itemize}
	\item every class of graphs with bounded degree admits modeling limits;
	\item every class of graphs of colored rooted trees bounded height admits modeling limits;
	\item every class of graphs with bounded tree-depth admits modeling limits.
\end{itemize}

On the other hand, only sparse monotone classes of graphs can admits modeling limits.
Precisely, if a monotone class of graphs admits modeling limits, then it is nowhere dense \cite{NPOM1arxiv}, 
and we conjectured that a monotone class of graphs actually admits modeling limits
if and only if it is nowhere dense.

Recall that a monotone class of graphs ${\mathcal C}$ is {\em nowhere dense} if, for every integer $p$
there exists a graph whose $p$-subdivision is not in $\mathcal C$ (for more on nowhere dense graphs, see
\cite{ECM2009,ND_logic,Nevsetvril2010a,ND_characterization,Sparsity}). The importance of nowhere dense classes
and the strong relationship of this notion with first-order logic is examplified by the recent result of
Grohe, Kreutzer, and Siebertz \cite{Grohe2013}, which states that (under a reasonable complexity theoretic assumption)
deciding first-order properties of graphs in a monotone class $\mathcal C$ is fixed-parameter tractable if and only if
$\mathcal C$ is nowhere dense.

In this paper, we initiate a systematic study of hereditary classes that admit modeling limits.
We prove that the problem of the existence of a modeling limit can be reduced to 
the study of ${\rm FO}^{\rm local}$-convergence, and then to two ``typical'' particular cases:
\begin{itemize}
	\item {\em Residual sequences}, that is sequences such that (intuitively) the limit has only zero-measure connected components,
	\item {\em Non-dispersive sequences}, that is sequences such that (intuitively) the limit is (almost) connected.
\end{itemize}

A modeling $\mathbf A$ with universe $A$ satisfies the {\em Finitary Mass Transport Principle} if, 
for every $\phi,\psi\in{\rm FO}_1(\sig)$ and every integers $a,b$ such that
$$\begin{cases}
\phi\entails (\exists^{\geq a}y)\,(x_1\sim y)\wedge\psi(y)\\
\psi\entails (\exists^{\leq b}y)\,(x_1\sim y)\wedge\phi(y)
\end{cases}$$
it holds
$$a\,\langle\phi,\mathbf A\rangle\leq b\,\langle\psi,\mathbf A\rangle.$$
It is clear that every finite structure satisfies the Finitary Mass Transport Principle, hence
every modeling ${\rm FO}$-limit of finite structures satisfies the Finitary Mass Transport Principle, too.

A stronger version of this principle, which is also satisfied by every finite structure, does not
automatically hold in the limit.
A modeling $\mathbf A$ with universe $A$ satisfies the {\em Strong Finitary Mass Transport Principle} if, 
for every measurable subsets $X,Y$ of $A$, and every integers $a,b$, the following property holds:
\begin{quote}
If every $x\in X$ has at least $a$ neighbors in $Y$ and every $y\in Y$ has at most $b$ neighbors in $X$ then
$a\,\nu_{\mathbf A}(X)\leq b\,\nu_{\mathbf A}(Y)$.
\end{quote}

In this context, we prove the following theorem, which is the principal result of this paper.

\begin{theorem}
Let $\mathcal C$ be a hereditary class of structures.

Assume that for every $\mathbf A_n\in\mathcal C$ and every $\rho_n\in A_n$  ($n\in\bbbn$) the following
properties hold:
\begin{enumerate}
	\item  if $(\mathbf A_n)_{n\in\bbbn}$ is  ${\rm FO}_1^{\rm local}$-convergent and residual, then it has
a modeling ${\rm FO}_1^{\rm local}$-limit;
\item  if $(\mathbf A_n,\rho_n)_{n\in\bbbn}$ is ${\rm FO}^{\rm local}$-convergent
(resp. ${\rm FO}_p^{\rm local}$-convergent) and
$\rho$-non-dispersive  then it has
a modeling ${\rm FO}^{\rm local}$-limit (resp. a ${\rm FO}_p^{\rm local}$-limit).
\end{enumerate}

Then $\mathcal C$ admits modeling limits (resp. modeling ${\rm FO}_p$-limits).

Moreover, if in cases (1) and (2) the modeling limits satisfy the Strong Finitary Mass Transport Principle, then
$\mathcal C$ admits modeling limits (resp. modeling ${\rm FO}_p$-limits) that satisfy the Strong Finitary Mass Transport Principle.
\end{theorem}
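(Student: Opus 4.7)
The plan is to separate the problem into three reductions: first from ${\rm FO}$-convergence to ${\rm FO}^{\rm local}$-convergence via Gaifman's locality theorem, then from ${\rm FO}^{\rm local}$-convergent sequences to a countable family of ``pieces'' that are either non-dispersive around a root or collectively residual, and finally to glue the pieces back together as a disjoint-union modeling.

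For the first reduction I would invoke Gaifman's normal form: every first-order formula is equivalent to a Boolean combination of local formulas and basic local sentences (local statements around named points). By compactness of the space of probability measures on $S(\mathcal{B}({\rm FO}(\lambda)))$, we may pass to a subsequence where every first-order sentence converges; it then suffices to produce a modeling whose Stone pairings agree with the limit on local formulas, because Gaifman normal form will carry this to all of ${\rm FO}$. At this point the question is purely one about ${\rm FO}^{\rm local}$-convergent sequences.

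Next I would perform a component-extraction argument. For each $n$, partition the structure $\mathbf{A}_n$ into its connected components and consider the measure they carry. After passing to a subsequence, either (i) the maximum component measure tends to $0$, in which case the sequence is residual and hypothesis (1) directly gives an ${\rm FO}_1^{\rm local}$-limit, or (ii) there is a component of uniformly positive mass; pick a root $\rho_n^{(1)}$ in it, extract a further subsequence so that $(\mathbf{A}_n,\rho_n^{(1)})$ is $\rho$-non-dispersive (by definition the connected component of $\rho^{(1)}$ captures essentially all the local mass around $\rho^{(1)}$), and apply hypothesis (2) to obtain a modeling limit $\mathbf{L}_1$. Remove this component from $\mathbf{A}_n$; by hereditariness of $\mathcal{C}$ the remainder stays in $\mathcal{C}$. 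Iterate this extraction with a diagonal argument to obtain at most countably many non-dispersive pieces $\mathbf{L}_1,\mathbf{L}_2,\dots$ with masses $\alpha_i=\lim_n \nu_{\mathbf{A}_n}(\text{piece }i)$, and a leftover sequence of mass $\alpha_0=1-\sum_{i\geq 1}\alpha_i$ that is residual by construction, yielding $\mathbf{L}_0$ from hypothesis~(1). Define $\mathbf{L}$ as the measurable disjoint union with probability $\nu_{\mathbf{L}}=\sum_{i\geq 0}\alpha_i\nu_{\mathbf{L}_i}$, giving a modeling since definability and measurability pass to disjoint unions componentwise.

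To verify that $\mathbf{L}$ is the desired limit, I would check that for every local formula $\phi$ with $p$ free variables $\langle\phi,\mathbf{L}\rangle=\sum_i \alpha_i^{p}\langle\phi,\mathbf{L}_i\rangle$, because locality forces the tuples realizing $\phi$ to lie within a single component; the corresponding identity on the finite side $\langle\phi,\mathbf{A}_n\rangle$ is exact, and the cross terms vanish in the limit because the residual part contributes $O(\alpha_0^p)$ from each piece's vanishing components. Gaifman's normal form then upgrades this to arbitrary first-order formulas (or to ${\rm FO}_p$), handling the ``basic local sentences'' coming from fixed constants via the non-dispersive pieces, where a distinguished root is available. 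The main obstacle will be this last step: controlling cross-component interactions in the Stone pairings of non-local formulas and ensuring that the diagonal extraction is compatible with the convergence of \emph{all} local sentences simultaneously, not just those involving the already-extracted roots. The Strong Finitary Mass Transport Principle transfers routinely, since edges in $\mathbf{L}$ live inside a single $\mathbf{L}_i$ and the additivity $\nu_{\mathbf{L}}(X)=\sum_i \alpha_i\nu_{\mathbf{L}_i}(X\cap L_i)$ lets one derive the inequality $a\,\nu_{\mathbf{L}}(X)\leq b\,\nu_{\mathbf{L}}(Y)$ by summing the corresponding piecewise inequalities.
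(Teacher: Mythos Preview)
Your component-extraction dichotomy has a genuine gap. You claim that if the maximum component measure does not tend to $0$, then you can pick a root $\rho_n^{(1)}$ in a heavy component and, after passing to a subsequence, obtain a $\rho$-non-dispersive sequence. This is false: take $\mathbf A_n$ to be a path on $n$ vertices. There is a single component of measure $1$ for every $n$, yet for any choice of root $\rho_n$ and any fixed $d$ one has $\nu_{\mathbf A_n}(B_d(\mathbf A_n,\rho_n))=(2d+1)/n\to 0$, so the rooted sequence is never $\rho$-non-dispersive. In fact this sequence is residual in the paper's sense (balls of every fixed radius have vanishing measure) even though no component is small. The dichotomy you need is not ``small components versus a big component'' but ``small balls everywhere versus some vertex with a ball of positive limiting mass,'' and this is exactly what the paper's breaking argument (Lemmas~\ref{lem:b1} and~\ref{lem:b2} and the construction leading to Theorem~\ref{thm:break}) provides: one extracts vertices $c_i^n$ whose $D$-balls carry mass $\geq\epsilon$, cuts out those balls (not components) to form the $\rho$-non-dispersive pieces $H_{i,n}$, and shows the remainder $R_n$ has uniformly small $r$-balls. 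Iterating over a scale $\epsilon\to 0$, $r\to\infty$ produces the countable family of non-dispersive pieces and a residual leftover. Because $\mathcal C$ is hereditary, both the balls and the complements stay in $\mathcal C$, but note that the pieces are \emph{not} unions of components of $\mathbf A_n$: edges are genuinely deleted, and one must argue (as the paper does via the annulus estimates) that this perturbation is negligible for $r'$-local formulas.

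A secondary issue: your verification formula $\langle\phi,\mathbf L\rangle=\sum_i \alpha_i^{p}\langle\phi,\mathbf L_i\rangle$ is incorrect for local $\phi\in{\rm FO}_p^{\rm local}$. Locality does not force all free variables into the same piece; an $r$-local formula can be satisfied by a tuple whose coordinates lie in distinct components (for instance $\bigwedge_i\psi_i(x_i)$ with the $\psi_i$ $r$-local in one variable). The correct expansion sums over all ordered partitions of $\{1,\dots,p\}$ among the pieces and, when variables land in different pieces, uses the factorisation of $\phi$ under the far-apart hypothesis (this is what Theorem~\ref{thm:convcomb} encapsulates). Your claimed ``exact identity on the finite side'' is likewise not exact; cross-piece tuples contribute, and their contribution does not vanish just because the residual mass is small.
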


Then we apply this theorem in Section~\ref{sec:trees} to give a simple proof of the fact that the class of forests admit modeling limits.

\begin{theorem}
The class of finite forests admits modeling limits, that is:
every ${\rm FO}$-convergent sequence of finite forests as a modeling ${\rm FO}$-limit that satisfies the Strong Finitary Mass Transport Principle.
\end{theorem}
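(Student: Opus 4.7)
The plan is to invoke the main theorem: it suffices to establish, for the class of finite forests, hypothesis~(1) (residual ${\rm FO}_1^{\rm local}$-limits) and hypothesis~(2) ($\rho$-non-dispersive ${\rm FO}^{\rm local}$-limits), and in each case to guarantee the Strong Finitary Mass Transport Principle. The underlying structural fact that makes this tractable is that in a forest every $r$-ball around a vertex is itself a tree of bounded radius, so by Gaifman locality an ${\rm FO}$-type of a tuple reduces to a combinatorial invariant of a bounded rooted neighborhood.

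For the residual case, every connected component carries asymptotically vanishing relative measure, so the sequence is described by a probability distribution $\sigma_n$ on the countable set $\mathcal T$ of isomorphism types of rooted finite trees (the root being a sampled vertex), and ${\rm FO}_1^{\rm local}$-convergence furnishes a limit distribution~$\sigma$. The limit modeling is then built as a disjoint union of copies of finite trees weighted by $\sigma$, equipped with the natural Borel structure (each component being finite and discrete) and the corresponding product probability measure. ${\rm FO}$-definable subsets of products of this modeling are Borel because they depend only on isomorphism types of local neighborhoods, and the Strong Finitary Mass Transport Principle transfers componentwise from finite trees.

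The non-dispersive case is the crux. Here almost all mass concentrates on the connected component of $\rho_n$, itself a rooted tree $T_n$ of possibly unbounded height. I would organise vertices by distance to the root and build the limit as a measure on an Ulam--Harris-type carrier $\omega^{<\omega}$, iteratively producing the level measures $\mu_k$ at depth~$k$ and the parent-to-child conditional transition kernels between consecutive levels from ${\rm FO}^{\rm local}$-convergence. The previously known modeling limits for rooted trees of bounded height~\cite{NPOM1arxiv} can be applied on any depth-$k$ truncation, and the full limit is then obtained by Kolmogorov-style extension on $\omega^{<\omega}$. The Strong Finitary Mass Transport Principle passes to the limit because the parent--child kernels are, by construction, limits of exact counts in finite trees, and arbitrary measurable sets $X,Y$ can be approximated by sets defined via local types, so the required inequality follows by bounded convergence.

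The main obstacle is precisely the passage from consistent level-by-level limits to a single standard Borel modeling of genuinely unbounded depth, while preserving Borel measurability of every ${\rm FO}$-definable subset. This requires uniform control of the tail mass beyond depth~$k$ in the non-dispersive setting, and a careful verification that the iteratively defined conditional kernels assemble into a bona fide modeling whose Stone pairings match the limit values on every formula, not just on bounded-depth ones. Once these measurability and tail-control issues are handled, the main theorem converts the two partial constructions into a modeling ${\rm FO}$-limit for every ${\rm FO}$-convergent sequence of finite forests, satisfying the Strong Finitary Mass Transport Principle.
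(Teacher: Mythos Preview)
Your overall architecture---invoke the main reduction theorem and discharge the residual and the $\rho$-non-dispersive hypotheses separately---is exactly what the paper does. The non-dispersive case in the paper is organised somewhat differently (it recursively peels off the root via an interpretation $\mathsf I_{Y\to F}$, applies the Extended Comb decomposition to the resulting forest, and iterates, grafting residual modelings onto a countable skeleton), but your level-by-level scheme using the known bounded-height result and then passing to the limit via Lemma~\ref{lem:boundedh} is close in spirit and plausible.

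The residual case, however, has a genuine gap. You assert that a residual sequence of forests is captured by a distribution $\sigma_n$ on isomorphism types of \emph{finite} rooted trees and propose the limit as a weighted disjoint union of finite trees. This is false in general: residual only says $\sup_v\nu(B_r(v))\to 0$ for each fixed $r$; it says nothing about the size of connected components. The sequence $(P_n)_{n\in\bbbn}$ of paths is residual, yet each $P_n$ is a single tree, and the limit modeling must contain a bi-infinite path (in fact, uncountably many). Your $\sigma_n$ would be a Dirac mass on the type of $P_n$, with no limiting distribution on finite trees, and no disjoint union of finite trees can serve as an ${\rm FO}_1^{\rm local}$-limit here. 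More subtly, even when components stay bounded in size at each stage (say $\sqrt{n}$ disjoint stars $K_{1,\sqrt{n}}$), the degrees can diverge and the limiting local types are not realised by any finite tree.

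The paper's construction for the residual case (Lemma~\ref{lem:tree1}) is accordingly much more delicate: the carrier is not a space of finite trees but the Stone space $S_1$ of local $1$-types, crossed with $[0,1)$ and, for types at infinite depth, with $\mathbb S^1$ carrying an irrational rotation. The ``father'' map on types induces the edge relation, and the $\mathbb S^1$ factor is precisely what produces the rootless infinite components in the path-like situation. You will need something of this kind; the finite-tree mixture model does not suffice.
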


Note that a result similar to Theorem~\ref{thm:mtree} was recently claimed by Kr\'a\v l, Kupec, and 
T\r uma \cite{Kravl2013}.

\section{Preliminaries}
Let $\mathbf A$ be a relational structure with signature $\sig$ and universe $A$, and let $X\subseteq A$.
The {\em substructure} $\mathbf A[X]$ {\em induced} by $X$ has domain $X$ and the same relations
as $\mathbf A$ (restricted to $X$). A class $\mathcal C$ of $\sig$-structures is {\em hereditary}
if every induced substructure of a structure in $\mathcal C$ belongs to $\mathcal C$:
$(\forall \mathbf A\in\mathcal C, \forall X\subset A)\ \mathbf A[X]\in\mathcal C$.

The {\em distance} between two vertices $u,v\in A$ is the smallest number of relations 
inducing a connected substructure of $\mathbf A$ and containing both $u$ and $v$, that is the
graph distance between $u$ and $v$ in the Gaifman graph of $\mathbf A$.
For $r\in\bbbn$ and $v\in A$, we denote by $B_r(\mathbf A,v)$ the 
{\em ball of radius $r$ centered at $v$}, that is the substructure of $\mathbf A$
induced by the vertices at distance at most $r$ from $v$ in $\mathbf A$.
More generally, for $v_1,\dots,v_k\in A$, we denote by $B_r(\mathbf A,v_1,\dots,v_k)$ the substructure of $\mathbf A$
induced by the vertices at distance at most $r$ from at least one of the $v_i$ ($1\leq i\leq k$) in $\mathbf A$.

A formula $\phi\in{\rm FO}_p^{\rm local}$ is {\em $r$-local} if its satisfaction only depends on the $r$-neighborhood
of the free variables, that is: for every $\sig$-structure $\mathbf A$ and every $v_1,\dots,v_p\in A^p$ it holds
$$\mathbf A\models \phi(v_1,\dots,v_p)\quad\iff\quad B_r(\mathbf A,v_1,\dots,v_p)\models\phi(v_1,\dots,v_p).$$

Recall the particular case of Gaifman locality theorem for sentences, which we will be usefull in the following.
A {\em local sentence} is a sentence of the form
$$
\exists x_1\dots\exists x_k\ \bigl(\bigwedge_{1\leq i<j\leq k}{\rm dist}(x_i,x_j)>2r\ \wedge\ \bigwedge_{1\leq i\leq k}\psi_i(x_i)\bigr),
$$
where $r,k\geq 1$ and $\psi_i$ is $r$-local.
\begin{theorem}[Gaifman \cite{Gaifman1982}]
\label{thm:gaifman0}
Every first-order sentence is equivalent to a Boolean combination of local sentences.
\end{theorem}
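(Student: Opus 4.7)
The plan is to establish a stronger normal form by induction on formula complexity, namely that every first-order formula $\phi(\bar x)$ is equivalent to a Boolean combination of $r$-local formulas in $\bar x$ (for some $r$ depending on $\phi$) and local sentences; the theorem stated for sentences is then the special case where the free-variable component is vacuous. Atomic formulas are already $0$-local, and Boolean connectives trivially preserve the normal form, so the entire weight of the proof falls on the existential step $\exists y\,\phi(\bar x,y)$.

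After distributing the existential over a disjunctive normal form provided by the induction hypothesis and pulling out the local-sentence part (which is constant in $y$), the task reduces to analyzing $\exists y\,\chi(\bar x,y)$ where $\chi$ is $r$-local in $(\bar x,y)$. I would split according to whether $y$ lies inside or outside the ball $B_{2r}(\bar x)$. In the near case, $\exists y\,({\rm dist}(y,\bar x)\le 2r\wedge\chi(\bar x,y))$ only depends on a bounded-radius neighborhood of $\bar x$ and is therefore itself a local formula in $\bar x$. In the far case, the $r$-balls around $\bar x$ and around $y$ are disjoint, and on such configurations a Feferman--Vaught-style decomposition of $r$-local formulas yields
$$
\chi(\bar x,y)\equiv\bigvee_{j}\alpha_j(\bar x)\wedge\beta_j(y),
$$
with each $\alpha_j,\beta_j$ an $r$-local formula in its respective free variables.

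Substituting and distributing, the problem reduces to bringing formulas of the form $\exists y\,({\rm dist}(y,\bar x)>2r\wedge\beta_j(y))$ into normal form. For this I would use the equivalence
$$
\exists y\,({\rm dist}(y,\bar x)>2r\wedge\beta_j(y))\equiv\bigvee_{k\geq 1}\Bigl(\exists^{\geq k}y\,\beta_j(y)\wedge\exists^{<k}y\,({\rm dist}(y,\bar x)\le 2r\wedge\beta_j(y))\Bigr),
$$
whose second conjunct is a local formula in $\bar x$ and whose first conjunct is a pure sentence. It then remains to show that for every $r$-local $\beta$ and every $k$ the sentence $\exists^{\geq k}y\,\beta(y)$ is a Boolean combination of local sentences. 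This is achieved by a pigeonhole/scattering argument: if sufficiently many points satisfy $\beta$, then among them one can extract a subset pairwise at distance $>2r$, converting the raw cardinality statement into the scattered existential shape that defines a local sentence; conversely, if only boundedly many such points exist, the assertion is expressible by a Boolean combination of scattered existentials of smaller arity.

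The main obstacle I expect is precisely this last scattering step, which turns a statement about the size of an $r$-local definable set into the specific scattered form required by the definition of local sentence. A secondary difficulty is bookkeeping the locality radii: each inductive pass inflates $r$ (to roughly $3r+1$ in the near case and via the disjoint-decomposition in the far case), and one must verify that the process terminates in a single radius at each level of the formula. Once the stronger normal form is proved for formulas $\phi(\bar x)$, specializing to sentences removes the free-variable conjuncts and leaves exactly a Boolean combination of local sentences, as required.
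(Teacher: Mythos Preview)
The paper does not prove this theorem; it is quoted from Gaifman (1982) and used as a black box, so there is no proof in the paper to compare your proposal against.

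Your outline is the standard inductive argument, and the architecture (strengthen the statement to all formulas, near/far split on the existential witness, Feferman--Vaught decomposition over disjoint balls) is correct. The gap is in the displayed equivalence
\[
\exists y\,({\rm dist}(y,\bar x)>2r\wedge\beta_j(y))\ \equiv\ \bigvee_{k\geq 1}\Bigl(\exists^{\geq k}y\,\beta_j(y)\wedge\exists^{<k}y\,({\rm dist}(y,\bar x)\le 2r\wedge\beta_j(y))\Bigr),
\]
whose right-hand side is an infinite disjunction and hence not a first-order formula. Knowing that each individual $\exists^{\geq k}y\,\beta(y)$ is a Boolean combination of local sentences does not help: without a degree bound there is no a priori bound on the number of $\beta$-witnesses in $B_{2r}(\bar x)$, so the disjunction cannot simply be truncated at some finite $k$.

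The repair is to argue with scattered witnesses directly rather than with raw counts. With $n=|\bar x|$, the local sentence $\psi_{n+1}$ asserting the existence of $n{+}1$ pairwise $>4r$-distant $\beta$-witnesses already forces some witness to be $>2r$-far from every $x_i$ (each $x_i$ can be $\le 2r$-close to at most one such scattered witness, by the triangle inequality). Under $\neg\psi_{n+1}$ all $\beta$-witnesses lie within distance $4r$ of at most $n$ centres, and one case-splits on the proximity pattern between these centres and the $x_i$'s; this produces a finite Boolean combination involving only $\psi_1,\dots,\psi_{n+1}$ and local formulas in $\bar x$. You correctly flagged this scattering step as the main obstacle, but the route through unbounded $\exists^{\geq k}$ does not reach it.
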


We end this section with two very simple but usefull lemmas.
\begin{lemma}
\label{lem:1}
Let $\phi,\psi$ be formulas. Then it holds
$$|\langle\phi,\,\cdot\,\rangle-\langle\phi\wedge\psi,\,\cdot\,\rangle|\leq 1-\langle\psi,\,\cdot\,\rangle.$$
\end{lemma}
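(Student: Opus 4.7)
The plan is to unfold the Stone pairing as an integral of an indicator function and exploit the set-theoretic identity that compares the solution sets of $\phi$, $\phi\wedge\psi$ and $\psi$ inside a common ambient power of the universe. Let $V=\mathrm{Fv}(\phi)\cup\mathrm{Fv}(\psi)$ and write $n=|V|$. Once the free variables are placed in the common index set $V$, the solution sets $\widetilde\Omega_\phi$, $\widetilde\Omega_\psi$, and $\Omega_{\phi\wedge\psi}$ naturally live in $A^n$ (the first two being obtained, up to a permutation of coordinates, by taking the Cartesian product of $\Omega_\phi$, $\Omega_\psi$ with a suitable power of $A$); the invariance of the Stone pairing under renaming of free variables, which is recorded just before the lemma, ensures that this padding does not change the value of $\langle\phi,\cdot\rangle$ or $\langle\psi,\cdot\rangle$. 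Moreover $\Omega_{\phi\wedge\psi}=\widetilde\Omega_\phi\cap\widetilde\Omega_\psi$ by the definition of $\wedge$.

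From here the argument is a one-line computation. Since $\Omega_{\phi\wedge\psi}\subseteq\widetilde\Omega_\phi$, the difference $\langle\phi,\mathbf A\rangle-\langle\phi\wedge\psi,\mathbf A\rangle$ is non-negative, so the absolute value is superfluous, and it equals $\nu_{\mathbf A}^{\otimes n}(\widetilde\Omega_\phi\setminus\widetilde\Omega_\psi)$. Bounding the integrand by the indicator of the larger set $A^n\setminus\widetilde\Omega_\psi$ yields
$$
\langle\phi,\mathbf A\rangle-\langle\phi\wedge\psi,\mathbf A\rangle
\le \nu_{\mathbf A}^{\otimes n}(A^n\setminus\widetilde\Omega_\psi)
=1-\langle\psi,\mathbf A\rangle,
$$
which is the claimed inequality. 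The only point worth handling with some care — and the only place where the lemma could conceivably slip — is the bookkeeping of free variables, since $\phi$ and $\psi$ need not share them; but this is precisely what the renaming-invariance of $\langle\cdot,\cdot\rangle$ stated in the paragraph preceding the lemma is designed to take care of. No probabilistic or model-theoretic input beyond this and the monotonicity of measure is required.
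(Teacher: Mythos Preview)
Your proof is correct and is essentially the same argument as the paper's, just phrased measure-theoretically rather than in the Stone-pairing notation: the paper writes the identical decomposition as $\langle\phi,\cdot\rangle=\langle\phi\wedge\psi,\cdot\rangle+\langle\phi\wedge\neg\psi,\cdot\rangle$ and then bounds $\langle\phi\wedge\neg\psi,\cdot\rangle\le\langle\neg\psi,\cdot\rangle=1-\langle\psi,\cdot\rangle$, which is exactly your inclusion $\widetilde\Omega_\phi\setminus\widetilde\Omega_\psi\subseteq A^n\setminus\widetilde\Omega_\psi$ translated back. Your explicit handling of the free-variable padding is a nice touch that the paper leaves implicit.
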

\begin{proof}
$$
\langle \psi\wedge\phi,\,\cdot\,\rangle\leq\langle \phi,\,\cdot\,\rangle=
\langle \neg\psi\wedge\phi,\,\cdot\,\rangle+\langle \psi\wedge\phi,\,\cdot\,\rangle\leq
\langle \neg\psi,\,\cdot\,\rangle+\langle \psi\wedge\phi,\,\cdot\,\rangle,
$$
Thus
$$|\langle\phi,\,\cdot\,\rangle-\langle\phi\wedge\psi,\,\cdot\,\rangle|\leq \langle \neg\psi,\,\cdot\,\rangle=1-\langle\psi,\,\cdot\,\rangle.$$
\end{proof}
\begin{lemma}
\label{lem:indep}
Let $\psi_1,\dots,\psi_p$ be formulas without common free variables.
Then it holds
$$\langle\bigwedge_{i=1}^p\psi_i,\,\cdot\,\rangle=\prod_{i=1}^p\langle\psi_i,\,\cdot\,\rangle.$$
\end{lemma}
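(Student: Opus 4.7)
The plan is to unfold the Stone pairing as an iterated integral and then apply Fubini's theorem. By the invariance of the Stone pairing under renaming of the free variables (established in the paragraph just preceding Lemma~\ref{lem:1}), I may assume that the formulas $\psi_1,\dots,\psi_p$ have free variables indexed by pairwise disjoint blocks $I_1,\dots,I_p$ of natural numbers, say with $\psi_i$ having free variables $(x_j)_{j\in I_i}$. Since the $I_i$ are pairwise disjoint and $\psi_i$ constrains only the variables indexed by $I_i$, the solution set of $\bigwedge_{i=1}^p \psi_i$ in any modeling $\mathbf A$ decomposes, up to a permutation of coordinates, as the Cartesian product $\prod_{i=1}^p \Omega_{\psi_i}(\mathbf A)$.

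Next, I would observe that the indicator function of a Cartesian product factors as the product of the indicator functions of the factors. Each $\Omega_{\psi_i}(\mathbf A)$ is measurable by the very definition of a relational sample space, and the measure used in the Stone pairing is the product measure $\nu_{\mathbf A}^{\otimes N}$ on $A^N$ with $N=\sum_i|I_i|$. Applying Fubini's theorem to this product measure, the iterated integral defining $\langle \bigwedge_{i=1}^p\psi_i,\mathbf A\rangle$ splits as a product of iterated integrals, one for each $\psi_i$; each such factor is, by the definition of the Stone pairing, equal to $\langle \psi_i,\mathbf A\rangle$. This yields the claimed identity.

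There is no real obstacle in this lemma: the whole content is the disjointness of the free-variable sets, which is what forces the solution set to factor as a Cartesian product and the integrand to factor multiplicatively. The only mildly delicate point is the indexing bookkeeping, which is precisely what the renaming invariance of $\langle\,\cdot\,,\,\cdot\,\rangle$ is there to absorb, so that the definition of Stone pairing applies in its standard packed form on each side of the identity.
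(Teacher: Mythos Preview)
Your proposal is correct and follows essentially the same route as the paper's proof: both observe that, because the free-variable sets are disjoint, the solution set of $\bigwedge_i\psi_i$ is (up to a permutation of coordinates and an extra $A^k$ factor for index gaps) the Cartesian product $\prod_i\Omega_{\psi_i^\triangledown}(\mathbf A)$, and then use that the product measure of a product is the product of the measures. The paper phrases this last step as ``both sets have the same measure'' while you invoke Fubini explicitly, but the content is identical.
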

\begin{proof}
Let $k=\max\{i: x_i\in\bigcup_{j=1}^p{\rm Fv}(\psi_j)\}-\sum_{j=1}^p|{\rm Fv}(\psi_j)|$.
For every modeling $\mathbf A$, the solution set $\Omega_{\bigwedge_{i=1}^p\psi_i(x_i)}(\mathbf A)$ can be obtained
from $\Omega_{\psi_1^\triangledown}(\mathbf A)\times\dots\times\Omega_{\psi_p^\triangledown}(\mathbf A)\times A^k$ by permuting the coordinates, hence both sets have the same measure, that is:
$$\langle\bigwedge_{i=1}^p\psi_i,\,\cdot\,\rangle=\prod_{i=1}^p\langle\psi_i^\triangledown,\,\cdot\,\rangle=\prod_{i=1}^p\langle\psi_i,\,\cdot\,\rangle.$$
\end{proof}
\section{What does Stone pairing measure?}
By definition, the Stone pairing $\langle \phi,\mathbf A\rangle$
measures the probability that a given first-order formula $\phi$ is satisfied in $\mathbf A$
by a random assignment of vertices of $\mathbf A$ to the free variables. 
For this definition to make sense, we have to assume that 
every subset of a power of $A$ that is first-order definable without parameters is measurable.
Hence we have to consider, for each $p\in\bbbn$, a $\sigma$-algebra
on $A^p$ that contains all subsets of $A^p$ that are first-order definable without parameters.

The aim of this section is to prove that the minimal $\sigma$-algebra including 
all subsets of $A^p$ that are first-order definable without parameters
is exactly the family of all subsets of $A^p$ that are $\mathcal L_{\omega_1\omega}$-definable
without parameters.

We take time out for two lemmas.
\begin{lemma}
\label{lem:sig}
Let $\Omega$ be a set. 
For $p\in\bbbn$, let $\mathcal A_p$ be a field of sets on $\Omega^p$, and let $\sigma(\mathcal A_p)$ be the
minimal $\sigma$-algebra that contains $\mathcal A_p$.
For $p\in\bbbn$, let $f_p:\Omega^{p+1}\rightarrow \Omega^{p}$ and let $F_p:\mathcal P(\Omega^{p+1})\rightarrow\mathcal P(\Omega^{p})$ be defined by $F_p(X)=\{f_p(x): x\in X\}$.

Assume that for each $p\in\bbbn$, $F_p$ maps $\mathcal A_{p+1}$ to $\mathcal A_{p}$.

Then $F_p$ maps $\sigma(\mathcal A_{p+1})$ to $\sigma(\mathcal A_{p})$.
\end{lemma}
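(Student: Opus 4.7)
The natural approach is a \emph{good sets} argument. Define
\[
\mathcal{G}_p := \{X \in \sigma(\mathcal{A}_{p+1}) : F_p(X) \in \sigma(\mathcal{A}_p)\},
\]
and aim to show that $\mathcal{G}_p = \sigma(\mathcal{A}_{p+1})$. By the minimality of $\sigma(\mathcal{A}_{p+1})$, it suffices to verify that $\mathcal{G}_p$ is a $\sigma$-algebra on $\Omega^{p+1}$ that contains $\mathcal{A}_{p+1}$.

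The inclusion $\mathcal{A}_{p+1} \subseteq \mathcal{G}_p$ is exactly the hypothesis, and stability under countable unions is immediate because images commute with arbitrary unions: $F_p(\bigcup_n X_n) = \bigcup_n F_p(X_n)$, and $\sigma(\mathcal{A}_p)$ is stable under countable unions. So the first two closure properties of a $\sigma$-algebra come for free.

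The delicate point, and what I expect to be the main obstacle, is closure under complementation: unlike preimages, images do not commute with set complements, so $F_p(\Omega^{p+1} \setminus X)$ need not equal $\Omega^p \setminus F_p(X)$. To handle this, I would enrich the invariant by simultaneously tracking the \emph{dual} operator
\[
F_p^{\ast}(X) := \Omega^p \setminus F_p(\Omega^{p+1} \setminus X),
\]
which satisfies $F_p^{\ast}(\bigcap_n X_n) = \bigcap_n F_p^{\ast}(X_n)$. Because $\mathcal{A}_{p+1}$ is a field, both $F_p$ and $F_p^{\ast}$ send $\mathcal{A}_{p+1}$ into $\mathcal{A}_p$, and the enriched class $\widetilde{\mathcal{G}}_p := \{X : F_p(X), F_p^{\ast}(X) \in \sigma(\mathcal{A}_p)\}$ is manifestly symmetric under complementation, since $F_p$ and $F_p^{\ast}$ swap roles under $X \mapsto \Omega^{p+1} \setminus X$.

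It then remains to propagate this enriched invariant through the generation of $\sigma(\mathcal{A}_{p+1})$ from $\mathcal{A}_{p+1}$ by iterated countable unions and countable intersections (which, since $\mathcal{A}_{p+1}$ is a field, exhaust $\sigma(\mathcal{A}_{p+1})$ via de Morgan). At each successor level one uses the commutation of $F_p$ with unions and of $F_p^{\ast}$ with intersections, together with the fact that $F_p(\neg X) = \Omega^p \setminus F_p^{\ast}(X)$ lies in $\sigma(\mathcal{A}_p)$ whenever $F_p^{\ast}(X)$ does. Once both invariants survive the transfinite induction, $\widetilde{\mathcal{G}}_p$ is a $\sigma$-algebra containing $\mathcal{A}_{p+1}$, hence equals $\sigma(\mathcal{A}_{p+1})$, yielding the lemma. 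The crux is purely the bookkeeping between $F_p$ and $F_p^{\ast}$ along the Borel hierarchy.
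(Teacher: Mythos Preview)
Your transfinite induction does not go through: the enriched class $\widetilde{\mathcal{G}}_p = \{X : F_p(X),\, F_p^{\ast}(X) \in \sigma(\mathcal{A}_p)\}$ is \emph{not} stable under countable unions. If $X = \bigcup_n X_n$ with each $X_n \in \widetilde{\mathcal{G}}_p$, then certainly $F_p(X) = \bigcup_n F_p(X_n) \in \sigma(\mathcal{A}_p)$, but $F_p^{\ast}(X) = \Omega^p \setminus F_p\bigl(\bigcap_n (\Omega^{p+1}\setminus X_n)\bigr)$, and images do not commute with intersections, so you have no handle on this term. Phrased in your hierarchy language: knowing that $F_p$ behaves on the $\Sigma_\alpha$ level and $F_p^{\ast}$ on the $\Pi_\alpha$ level does not let you climb to $\Sigma_{\alpha+1}$, since a $\Sigma_{\alpha+1}$ set is a countable union of $\Pi_\alpha$ sets $Y_n$, and to control $F_p\bigl(\bigcup_n Y_n\bigr)=\bigcup_n F_p(Y_n)$ you would need $F_p(Y_n)\in\sigma(\mathcal{A}_p)$, whereas the inductive hypothesis only gives $F_p^{\ast}(Y_n)$.

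This gap is not repairable, because the lemma is false in the stated generality. Take $\Omega = 2^{\mathbb{N}}$, let $\mathcal{A}_p$ be the Boolean algebra of clopen subsets of $\Omega^p$, and let $f_p$ be projection onto the first $p$ coordinates. Projections of clopen sets are clopen (compactness plus openness of projections), so the hypothesis $F_p(\mathcal{A}_{p+1})\subseteq\mathcal{A}_p$ holds; yet $\sigma(\mathcal{A}_p)$ is the Borel $\sigma$-algebra, and projections of Borel sets are only analytic, not Borel in general. The paper's own argument shares precisely this defect --- its inductive claim that $F_p$ carries $\mathcal{P}_{p+1,i}$ into $\mathcal{P}_{p,i}$ fails for the same reason. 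What rescues the intended application (the subsequent lemma on $\mathcal{L}_{\omega_1\omega}$-definable sets) is that $\mathcal{L}_{\omega_1\omega}$ already \emph{has} an existential quantifier, so projections of $\mathcal{L}_{\omega_1\omega}$-definable sets are trivially $\mathcal{L}_{\omega_1\omega}$-definable; no purely $\sigma$-algebraic argument is available, or needed, for that.
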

\begin{proof}
The proof follows the standard construction of a $\sigma$-algebra by transfinite induction.
For $p\in\bbbn$, we let
\begin{itemize}
	\item $\mathcal S_{p,0}$ be the collection of sets obtained as countable unions of increasing sets in $\mathcal A_p$,
	that is: sets of the form $\bigcup_{i\in\bbbn}X_i$ where $X_i\in\mathcal A_p$ and 
	$X_1\subseteq X_2\subseteq\dots\subseteq X_n\subseteq\dots$;
	\item $\mathcal P_{p,0}$ be the collection of sets obtained as countable intersections of decreasing sets in $\mathcal A_p$,
	that is: sets of the form $\bigcap_{i\in\bbbn}X_i$ where $X_i\in\mathcal A_p$ and 
	$X_1\supseteq X_2\supseteq\dots\supseteq X_n\supseteq\dots$;
	\item (for $i\geq 1$ not a limit ordinal) $\mathcal S_{p,i}$ be the collection of sets obtained as countable unions of increasing sets in $\mathcal P_{p,i-1}$;
	\item (for $i\geq 1$ not a limit ordinal) $\mathcal P_{p,i}$ be the collection of sets obtained as countable intersections of decreasing sets in $\mathcal S_{p,i-1}$;
	\item (for $i$ limit ordinal) $\mathcal S_{p,i}=\bigcup_{j<i}S_{p,j}$ and $\mathcal P_{p,i}=\bigcup_{j<i}P_{p,j}$.
\end{itemize}

Then it is easily checked that by induction that for every $i$ up to $\omega_1$ it holds:
\begin{itemize}
	\item for all $X\in\Omega^p$, $(X\in \mathcal S_{p,i})\iff(\Omega^p\setminus X\in \mathcal P_{p,i})$;
	\item for every limit ordinal $i$, $\mathcal S_{p,i}=\mathcal P_{p,i}$;
	\item if $X,Y\in \mathcal S_{p,i}$ then $X\cup Y\in \mathcal S_{p,i}$ and $X\cap Y \in \mathcal S_{p,i}$;
	\item if $X,Y\in \mathcal P_{p,i}$ then $X\cup Y\in \mathcal P_{p,i}$ and $X\cap Y \in \mathcal P_{p,i}$;
	\item $F_p$ maps $\mathcal S_{p+1,i}$ to $\mathcal S_{p,i}$ and $\mathcal P_{p+1,i}$ to $\mathcal P_{p,i}$;
\end{itemize}

According to the monotone class theorem, $\sigma(\mathcal A_p)=\mathcal S_{p,\omega_1}=\mathcal P_{p,\omega_1}$.
\end{proof}

\begin{lemma}
\label{lem:omega}
We consider a relational structure $\mathbf A$ with countable signature.

Let $\mathcal A_p$ (resp. $\mathcal A_p^+$) be the field of sets of all the subsets of $A^p$ that are first-order definable without (resp. with) parameters.
Then the smallest $\sigma$-algebra $\sigma(\mathcal A_p)\supseteq \mathcal A_p$ (resp. $\sigma(\mathcal A_p^+)\supseteq\mathcal A_p^+$) 
is the algebra of 
all the subsets of $A^p$ that are definable in $\mathcal L_{\omega_1\omega}$ without (resp. with) parameters.
\end{lemma}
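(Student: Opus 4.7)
The plan is to prove both inclusions showing that the family of $\mathcal L_{\omega_1\omega}$-definable subsets of $A^p$ coincides with $\sigma(\mathcal A_p)$. I would first treat the ``without parameters'' case; the ``with parameters'' version is obtained by running the very same induction with $\mathcal A_p^+$ in place of $\mathcal A_p$, since the only model-theoretic facts used --- that existential quantification of a first-order formula is first-order, and that Boolean operations preserve first-order definability --- hold verbatim when parameters are allowed.

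The easy direction is that the collection $\mathcal B_p$ of $\mathcal L_{\omega_1\omega}$-definable subsets of $A^p$ contains $\sigma(\mathcal A_p)$: it contains every first-order definable set since ${\rm FO}\subset\mathcal L_{\omega_1\omega}$, and it is closed under complement, countable union, and countable intersection because $\mathcal L_{\omega_1\omega}$ admits $\neg$, countable $\bigvee$, and countable $\bigwedge$. Hence $\mathcal B_p$ is a $\sigma$-algebra containing $\mathcal A_p$, so it contains $\sigma(\mathcal A_p)$.

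For the reverse inclusion, I would proceed by induction on the construction of the $\mathcal L_{\omega_1\omega}$ formula $\phi$ with free variables in $\{x_1,\dots,x_p\}$, aiming at $\Omega_\phi\in\sigma(\mathcal A_p)$. Atomic formulas being first-order, the base case lies in $\mathcal A_p\subseteq\sigma(\mathcal A_p)$. The inductive steps for negation, countable disjunction, and countable conjunction translate, on the solution-set side, to complements and countable unions/intersections inside $A^p$, and so preserve membership in $\sigma(\mathcal A_p)$. The only delicate step is the existential quantifier: if $\phi(x_1,\dots,x_p)=\exists x_{p+1}\,\psi(x_1,\dots,x_{p+1})$, then $\Omega_\phi=F_p(\Omega_\psi)$, where $F_p:A^{p+1}\to A^p$ is the projection forgetting the last coordinate. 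By the induction hypothesis $\Omega_\psi\in\sigma(\mathcal A_{p+1})$, and Lemma~\ref{lem:sig} is tailor-made to conclude: its hypothesis that $F_p$ sends $\mathcal A_{p+1}$ into $\mathcal A_p$ is precisely the elementary observation that $\exists x_{p+1}\theta$ is first-order whenever $\theta$ is, so the lemma delivers $F_p(\Omega_\psi)\in\sigma(\mathcal A_p)$.

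The main conceptual obstacle is exactly this interaction between the projection $F_p$ and the transfinite construction of the generated $\sigma$-algebra, which is precisely what Lemma~\ref{lem:sig} is designed to encapsulate; once it is invoked at the quantifier step, the remaining verifications are routine syntactic checks.
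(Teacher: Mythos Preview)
Your proposal is correct and follows essentially the same approach as the paper: both rely on Lemma~\ref{lem:sig} applied to the coordinate projection $A^{p+1}\to A^p$ to handle the existential-quantifier step, which is the only nontrivial case in the induction on $\mathcal L_{\omega_1\omega}$-formula complexity. Your write-up is simply more explicit about the two inclusions and the structure of the induction than the paper's terse ``it follows easily.''
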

\begin{proof}
Let $\Omega=L$ and
$f_p:A^{p+1}\rightarrow A^p$ be the projection map. According to Lemma~\ref{lem:sig}, the projection
map send sets in $\sigma(\mathcal A_{p+1})$ to sets in $\sigma(\mathcal A_p)$
(and sets in $\sigma(\mathcal A_{p+1}^+)$ to sets in $\sigma(\mathcal A_p^+)$). It follows easily 
that subsets of $A^p$ that are $\mathcal L_{\omega_1\omega}$-definable without (resp. with) parameters are exactly
 those in $\sigma(\mathcal A_p)$ (resp. $\sigma(\mathcal A_p^+)$).
\end{proof}

Note that when $\mathbf A$ is a modeling, the collection of the  subsets of $A^p$ definable in $\mathcal L_{\omega_1\omega}$
without parameters
is the $\sigma$-algebra generated by the projection $\Type{p}{A}:A^p\rightarrow S(\mathcal B({\rm FO}_p))$, mapping
a $p$-tuple of vertices of $A$ to its $p$-type:
a subset $X$ of $A^p$ is definable in $\mathcal L_{\omega_1\omega}$ without parameters 
if and only if it is the preimage by $\Type{p}{A}$ of a Borel subset of  $S(\mathcal B({\rm FO}_p))$
(see \cite{NPOM1arxiv} for detailed definition and analysis of $\Type{p}{A}$).

\begin{corollary}
\label{cor:omega1}
For every modeling $\mathbf A$ the Stone pairing $\langle\,\cdot\,,\mathbf A\rangle$ can be extended in a unique
way to $\mathcal L_{\omega_1\omega}$. 
\end{corollary}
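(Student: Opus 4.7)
The plan is to use Lemma~\ref{lem:omega} together with the measurability clause in the definition of a modeling; everything else is formal. First, for any $p\in\bbbn$ and any formula $\phi\in\mathcal L_{\omega_1\omega}$ with free variables among $\{x_1,\dots,x_p\}$, I would argue that the solution set $\Omega_\phi(\mathbf A)\subseteq A^p$ is measurable for the product $\sigma$-algebra $\Sigma_{\mathbf A}^{\otimes p}$. By the definition of a relational sample space, every first-order definable subset of $A^p$ lies in $\Sigma_{\mathbf A}^{\otimes p}$; since the latter is already a $\sigma$-algebra it contains the smallest $\sigma$-algebra $\sigma(\mathcal A_p)$ generated by the first-order definable sets, and by Lemma~\ref{lem:omega} this $\sigma(\mathcal A_p)$ is exactly the collection of $\mathcal L_{\omega_1\omega}$-definable subsets of $A^p$. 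Hence $\Omega_\phi(\mathbf A)\in\Sigma_{\mathbf A}^{\otimes p}$.

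This measurability allows me to extend the Stone pairing by the same formula used for first-order formulas, namely
$$\langle\phi,\mathbf A\rangle:=\int_{A^p}\mathbf 1_{\Omega_\phi(\mathbf A)}(x_1,\dots,x_p)\,{\rm d}\nu_{\mathbf A}(x_1)\dots{\rm d}\nu_{\mathbf A}(x_p),$$
which coincides with the original definition whenever $\phi\in{\rm FO}$. The existence part of the corollary is therefore immediate.

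For uniqueness, the natural reading is that any extension of $\langle\,\cdot\,,\mathbf A\rangle$ to $\mathcal L_{\omega_1\omega}$ which behaves as a countably additive set function on solution sets must agree with the above. I would verify this by transfinite induction along the hierarchy $\mathcal S_{p,i}/\mathcal P_{p,i}$ from the proof of Lemma~\ref{lem:sig}: the base level $\mathcal A_p$ consists of the FO-definable sets where the pairing is prescribed; the successor step produces each new level as a countable increasing union (resp.\ decreasing intersection) of the previous one, on which the probability measure $\nu_{\mathbf A}^{\otimes p}$ is automatically monotone continuous; and limit steps are handled by $\mathcal S_{p,i}=\mathcal P_{p,i}=\bigcup_{j<i}\mathcal S_{p,j}$. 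Since $\sigma(\mathcal A_p)=\mathcal S_{p,\omega_1}$ exhausts all $\mathcal L_{\omega_1\omega}$-definable sets, the extension is pinned down uniquely from its FO-values.

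There is essentially no obstacle beyond invoking Lemma~\ref{lem:omega} cleanly: the real work (transfinite construction of the $\sigma$-algebra and checking closure properties through $\omega_1$) has already been done there. The only thing one must be careful about is to state the uniqueness under an explicit regularity assumption on the extension (continuity along countable monotone sequences of solution sets), which is automatic once one insists that the extension be given by $\nu_{\mathbf A}^{\otimes p}(\Omega_\phi(\mathbf A))$.
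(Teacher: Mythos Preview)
Your argument is correct and is exactly the approach the paper has in mind: the corollary is stated without proof precisely because it follows immediately from Lemma~\ref{lem:omega} together with the definition of a modeling, and you have spelled out these details accurately. Your care in making the uniqueness claim precise (requiring countable monotone continuity, equivalently that the extension be a probability measure on the $\sigma$-algebra of $\mathcal L_{\omega_1\omega}$-definable sets) is appropriate, since the paper leaves this implicit; one could equally invoke the $\pi$--$\lambda$ theorem in place of your transfinite induction, but your approach matches the machinery already set up in Lemma~\ref{lem:sig}.
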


\begin{remark}
Let $\Xi(\sig)$ be the set of all probability measures on the Stone space
$S(\mathcal B({\rm FO}(\sig)))$, and let $\mathcal B$ be the $\sigma$-algebra
generated by evaluation maps $\mu\mapsto\mu(A)$ for $A$ measurable set of
$S(\mathcal B({\rm FO}(\sig)))$. It is well known that $(\Xi(\sig),\mathcal B)$
is a standard Borel space (\cite{Kechris1995}, Sect. $17$.E). 
(Hence the space of all finite $\sig$-structures and their ${\rm FO}$-limits is also
a compact standard Borel space, as it can be identified to a closed subspace of $\Xi(\sig)$.)
The mapping $\mathbf A\mapsto\nu_{\mathbf A}$ embeds the space $M$ of modelings
into $\Xi(\sig)$. The initial topology on $M$ with respect to this mapping is the same as 
the topology induced by Stone pairing. Hence
the mapping $\langle\phi,\,\cdot\,\rangle: M\rightarrow [0,1]$, which maps
$\mathbf A$ to $\langle\phi,\mathbf A\rangle$, is 
continuous for $\phi\in{\rm FO}(\sig)$, and measurable for $\phi\in{\mathcal L_{\omega_1\omega}}(\sig)$.

Also remark that the topology of $\Xi(\sig)$ can be defined by means of
L\'evy--Prokhorov metric (by choosing some metric on the Stone space). 
For instance, for finite signature $\sig$, the topology of $M$ can be generated by the pseudometric:
$$
{\rm dist}(\mathbf A,\mathbf B)=2^{-\sup\{n|\ \forall\phi\in{\rm FO}(\sig),\,
{\rm qrank}(\phi)+|{\rm Fv}(\phi)|\leq n\Rightarrow |\langle\phi,\mathbf A\rangle-\langle\phi,\mathbf B\rangle|<2^{-n}\}}.
$$
\end{remark}

\begin{theorem}
Let $\mathbf A$ be a relational sample space. Then every subset of $A^p$ that is
$\mathcal L_{\omega_1\omega}$-definable  (with parameters) is measurable (with respect to product
Borel $\sigma$-algebra $\Sigma_{\mathbf A}^{\otimes p}$).
\end{theorem}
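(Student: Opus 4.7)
The plan is to bootstrap measurability from the first-order case to the $\mathcal L_{\omega_1\omega}$ case via Lemma~\ref{lem:omega}, after first upgrading the definition of a relational sample space to also handle first-order formulas with parameters.

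First I would observe that the product $\sigma$-algebra $\Sigma_{\mathbf A}^{\otimes p}$ is closed under taking fixed slices: for any $E\in\Sigma_{\mathbf A}^{\otimes(p+k)}$ and any $(a_1,\dots,a_k)\in A^k$, the slice
$$E_{a_1,\dots,a_k}=\{(x_1,\dots,x_p)\in A^p:(x_1,\dots,x_p,a_1,\dots,a_k)\in E\}$$
belongs to $\Sigma_{\mathbf A}^{\otimes p}$. This is a standard monotone-class argument: the collection of $E$ enjoying this slice-measurability property is a $\sigma$-algebra that contains all measurable rectangles, hence contains all of $\Sigma_{\mathbf A}^{\otimes(p+k)}$.

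Next, given a first-order formula $\phi(x_1,\dots,x_p)$ with parameters $a_1,\dots,a_k\in A$, introduce fresh variables $y_1,\dots,y_k$ and let $\phi'(x_1,\dots,x_p,y_1,\dots,y_k)$ be the parameter-free formula obtained by substituting $y_i$ for $a_i$. By the very definition of a relational sample space, $\Omega_{\phi'}(\mathbf A)\subseteq A^{p+k}$ lies in $\Sigma_{\mathbf A}^{\otimes(p+k)}$. Since $\Omega_\phi(\mathbf A)$ is exactly the slice of $\Omega_{\phi'}(\mathbf A)$ at $(a_1,\dots,a_k)$, the preceding paragraph gives $\Omega_\phi(\mathbf A)\in\Sigma_{\mathbf A}^{\otimes p}$. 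In other words, every element of the field $\mathcal A_p^+$ of first-order definable subsets of $A^p$ (with parameters) is measurable.

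Finally, since $\Sigma_{\mathbf A}^{\otimes p}$ is itself a $\sigma$-algebra containing $\mathcal A_p^+$, it contains the $\sigma$-algebra $\sigma(\mathcal A_p^+)$ generated by $\mathcal A_p^+$. By Lemma~\ref{lem:omega}, $\sigma(\mathcal A_p^+)$ is exactly the collection of subsets of $A^p$ that are $\mathcal L_{\omega_1\omega}$-definable with parameters, so every such set lies in $\Sigma_{\mathbf A}^{\otimes p}$, as required. The only nontrivial step is the slice-measurability lemma, and even that reduces to a one-line monotone-class exercise; the substantive work has already been done in Lemma~\ref{lem:omega}.
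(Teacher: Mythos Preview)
Your argument is correct. The paper states this theorem without proof, evidently regarding it as a direct consequence of Lemma~\ref{lem:omega}; your write-up makes that deduction explicit and, in particular, supplies the one step the paper glosses over: the definition of a relational sample space only guarantees measurability of \emph{parameter-free} first-order definable sets, so one must pass from $\mathcal A_p$ to $\mathcal A_p^+$ before invoking Lemma~\ref{lem:omega}. Your slice argument (replace parameters by fresh variables, use measurability in $\Sigma_{\mathbf A}^{\otimes(p+k)}$, then take the section at the parameter tuple) is exactly the right way to do this, and the monotone-class justification for section-measurability is standard. After that, the inclusion $\mathcal A_p^+\subseteq\Sigma_{\mathbf A}^{\otimes p}$ forces $\sigma(\mathcal A_p^+)\subseteq\Sigma_{\mathbf A}^{\otimes p}$, and Lemma~\ref{lem:omega} identifies $\sigma(\mathcal A_p^+)$ with the $\mathcal L_{\omega_1\omega}$-definable sets with parameters, finishing the proof. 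This is the intended route; you have simply filled in the details the paper omits.
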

\subsection{Interpretation Schemes}
Interpretation Schemes (introduced in this setting in~\cite{NPOM1arxiv}) generalize to other logics than ${\rm FO}$.

\begin{definition}
\label{def:interp}
Let $\mathcal L$ be a logic (for us, ${\rm FO}$ or ${\mathcal L}_{\omega_1\omega}$). For $p\in\bbbn$ and 
a signature $\sig$, $\mathcal L_p(\sig)$ denotes the set of the formulas in the language of $\sig$ in logic
$\mathcal L$, with free variables in $\{x_1,\dots,x_p\}$. 

Let $\kappa,\sig$ be signatures, where $\sig$ has 
$q$ relational symbols $R_1,\dots,R_q$ with respective arities $r_1,\dots,r_q$.

An {\em $\mathcal L$-interpretation scheme} ${\mathsf I}$ of $\sig$-structures
in $\kappa$-structures is defined by an integer $k$ --- the {\em exponent} of the $\mathcal L$-interpretation scheme --- a formula
$E\in{\mathcal L}_{2k}(\kappa)$, a formula $\theta_0\in{\mathcal L}_k(\kappa)$, and
 a formula 
$\theta_i\in{\rm FO}_{r_ik}(\kappa)$ for each symbol $R_i\in\sig$, such that:
\begin{itemize}
  \item the formula $E$ defines an equivalence relation
  of $k$-tuples;
   \item each formula $\theta_i$ is compatible with $E$, in the sense that
   for
   every $0\leq i\leq q$ it holds $$
  \bigwedge_{1\leq j\leq r_i}\,E(\mathbf x_j,\mathbf y_j)\quad\entails\quad
  \theta_i(\mathbf x_1,\dots,\mathbf x_{r_i})\leftrightarrow\theta_i(\mathbf
  y_1,\dots,\mathbf y_{r_i}),
   $$
   where $r_0=1$, boldface $\mathbf x_j$ and $\mathbf y_j$ represent
   $k$-tuples of free variables, and 
   where $\theta_i(\mathbf x_1,\dots,\mathbf x_{r_i})$ stands for
 $\theta_i(x_{1,1},\dots,x_{1,k},\dots,x_{r_i,1},\dots,x_{r_i,k})$.
\end{itemize}

For a $\kappa$-structure $\mathbf A$, we denote by $\mathsf{I}(\mathbf A)$ the
$\sig$-structure $\mathbf B$ defined as follows:
 \begin{itemize}
   \item the domain $B$ of $\mathbf B$ is the subset
   of the $E$-equivalence classes $[\mathbf x]\subseteq A^k$  of the tuples $\mathbf x=(x_1,\dots,x_k)$ 
  such that $\mathbf A\models \theta_0(\mathbf x)$;
   \item for each $1\leq i\leq q$ and every 
   $\mathbf v_1,\dots,\mathbf v_{s_i}\in A^{kr_i}$ such that
   $\mathbf A\models\theta_0(\mathbf v_j)$ (for every $1\leq j\leq r_i$) it
   holds
   $$
   \mathbf B\models R_i([\mathbf v_1],\dots,[\mathbf v_{r_i}])\quad\iff\quad
   \mathbf A\models \theta_i(\mathbf v_1,\dots,\mathbf v_{r_i}).
   $$  
 \end{itemize}
\end{definition}

From the standard properties of model theoretical interpretations
(see, for instance \cite{Lascar2009} p.~180), we state the following: if
$\mathsf I$ is an $\mathcal L$-interpretation of $\sig$-structures in $\kappa$-structures,
then there exists a mapping
$\tilde{\mathsf I}:{\mathcal L}(\sig)\rightarrow {\mathcal L}(\kappa)$ (defined by
means of the formulas $E,\theta_0,\dots,\theta_q$ above) such that
for every $\phi\in {\mathcal L}_p(\sig)$, and every $\kappa$-structure $\mathbf A$,
 the following property holds (while letting $\mathbf B=\mathsf I(\mathbf A)$
 and identifying elements of $B$ with the corresponding equivalence classes of $A^k$):

For every $[\mathbf v_1],\dots,[\mathbf v_p]\in B^{p}$ (where $\mathbf v_i=(v_{i,1},\dots,v_{i,k})\in A^k$)
it holds
$$
  \mathbf B\models \phi([\mathbf v_1],\dots,[\mathbf v_p])\quad\iff\quad \mathbf
  A\models \tilde{\mathsf I}(\phi)(\mathbf v_1,\dots,\mathbf v_p).$$
	
It directly follows from the existence of the mapping $\tilde{\mathsf I}$ that
\begin{itemize}
	\item an ${\rm FO}$-interpretation scheme ${\mathsf I}$ of $\sig$-structures in
$\kappa$-structures defines a continuous mapping from $S(\mathcal B({\rm
FO}(\kappa)))$ to $S(\mathcal B({\rm FO}(\sig)))$;
	\item an $\mathcal L_{\omega_1\omega}$-interpretation scheme ${\mathsf I}$ of $\sig$-structures in
$\kappa$-structures defines a measurable mapping from $S(\mathcal B({\rm
FO}(\kappa)))$ to $S(\mathcal B({\rm FO}(\sig)))$.
\end{itemize}

\begin{definition}
Let $\kappa,\sig$ be signatures.
A {\em basic $\mathcal L$-interpretation scheme} ${\mathsf I}$ of $\sig$-structures
in $\kappa$-structures  with {\em exponent} $k$ is defined by a formula 
$\theta_i\in{\mathcal L}_{kr_i}(\kappa)$ for each symbol $R_i\in\sig$ with arity
$r_i$. 

For a $\kappa$-structure $\mathbf A$, we denote by $\mathsf{I}(\mathbf A)$ the
structure with domain $A^k$ such that, for every $R_i\in\sig$ with arity $r_i$ and every
$\mathbf v_1,\dots,\mathbf v_{r_i}\in A^k$ it holds
$$
\mathsf I(\mathbf A)\models R_i(\mathbf v_1,\dots,\mathbf v_{r_i})\quad\iff\quad
\mathbf A\models\theta_i(\mathbf v_1,\dots,\mathbf v_{r_i}).
$$
\end{definition}

It is immediate that every basic $\mathcal L$-interpretation scheme $\mathsf I$ defines
a mapping $\tilde{\mathsf I}:\mathcal L(\sig)\rightarrow \mathcal L(\kappa)$ such
that for every $\kappa$-structure $\mathbf A$, every $\phi\in\mathcal L_p(\sig)$,
and every $\mathbf v_1,\dots,\mathbf v_p\in A^k$ it holds
$$
\mathsf I(\mathbf A)\models \phi(\mathbf v_1,\dots,\mathbf v_{p})\quad\iff\quad
\mathbf A\models\tilde{\mathsf I}(\phi)(\mathbf v_1,\dots,\mathbf v_{p})
$$

We deduce the following general properties:

\begin{lemma}[\cite{NPOM1arxiv}]
\label{lemma:interpFO}
Let $\mathsf I$ be an ${\rm FO}$-interpretation scheme of $\sig$-structures in $\kappa$-structures.

Then, if a sequence $(\mathbf A_n)_{n\in\bbbn}$ of finite $\kappa$-structures
is ${\rm FO}$-convergent then the sequence 
$(\mathsf{I}(\mathbf A_n))_{n\in\bbbn}$ of (finite) $\sig$-structures
is ${\rm FO}$-convergent. 
\end{lemma}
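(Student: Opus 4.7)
The plan is to fix $\phi\in{\rm FO}_p(\sig)$ and show that $(\langle\phi,\mathsf I(\mathbf A_n)\rangle)_n$ converges. When $p=0$, this is immediate: the translation map $\tilde{\mathsf I}$ gives $\langle\phi,\mathsf I(\mathbf A_n)\rangle=\langle\tilde{\mathsf I}(\phi),\mathbf A_n\rangle$, which converges by hypothesis. For $p\geq 1$ the obstacle is that the universe $B_n$ of $\mathsf I(\mathbf A_n)$ consists of $E$-equivalence classes of possibly varying sizes, so the uniform measure on $B_n^p$ does not pull back cleanly to the uniform measure on $A_n^{kp}$.

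The key step is a stratification by equivalence-class size. For each positive integer $t$, the FO-formula of $\kappa$ defined by $\sigma_t(\mathbf v):=(\exists^{=t}\mathbf v')\,E(\mathbf v,\mathbf v')$ identifies $k$-tuples whose $E$-class has exactly $t$ elements. Writing the count of $p$-tuples of classes as a $1/\prod_i|[\mathbf v_i]|$-weighted sum over representatives yields
\begin{align*}
\frac{|B_n|^p}{|A_n|^{kp}}\,\langle\phi,\mathsf I(\mathbf A_n)\rangle &= \sum_{t_1,\ldots,t_p\geq 1}\frac{\langle\Phi_{t_1,\ldots,t_p},\mathbf A_n\rangle}{t_1\cdots t_p},\\
\frac{|B_n|}{|A_n|^k} &= \sum_{t\geq 1}\frac{\langle\theta_0\wedge\sigma_t,\mathbf A_n\rangle}{t},
\end{align*}
where $\Phi_{t_1,\ldots,t_p}:=\tilde{\mathsf I}(\phi)\wedge\bigwedge_{i=1}^p(\theta_0\wedge\sigma_{t_i})(\mathbf v_i)$. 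Each summand is a Stone pairing of an FO-formula of $\kappa$, hence converges. Since $\{\sigma_t\}_t$ partitions $\theta_0$-tuples, Lemma~\ref{lem:indep} gives $\sum_{\mathbf t}\langle\Phi_{\mathbf t},\mathbf A_n\rangle\leq \langle\theta_0,\mathbf A_n\rangle^p\leq 1$; using $\prod_i t_i\geq\max_i t_i$ the tail beyond $\max_i t_i>T$ is bounded by $1/T$ uniformly in $n$. Dominated convergence therefore yields a limit for each series.

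If $\lim_n |B_n|/|A_n|^k>0$ the ratio converges directly. The main obstacle is the degenerate regime $|B_n|/|A_n|^k\to 0$, in which $E$-classes grow unboundedly and both numerator and denominator vanish. Here one argues separately: if $|B_n|$ stays bounded, then convergence of the $\{0,1\}$-valued Stone pairings $\langle\tilde{\mathsf I}(\psi),\mathbf A_n\rangle$ for all sentences $\psi$ forces the complete FO-theory of $\mathbf B_n$ to stabilize, and on bounded-size structures the theory determines the isomorphism type, so $\langle\phi,\mathbf B_n\rangle$ is eventually constant. If instead $|B_n|\to\infty$ with $|B_n|=o(|A_n|^k)$, one extracts a convergent subsequence via compactness of the space of probability measures on $S(\mathcal B({\rm FO}(\sig)))$ and invokes the continuity of the induced Stone-space map $\hat{\mathsf I}$ together with the full strength of FO-convergence of $(\mathbf A_n)$ to force uniqueness of the limit. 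This last reduction is the delicate point, since the pushforward $\hat{\mathsf I}_\ast\mu_{\mathbf A_n}$ is the \emph{weighted} (by class size) version of $\mu_{\mathsf I(\mathbf A_n)}$ rather than the uniform one, so one must control the discrepancy between these two measures in the vanishing-density regime; this is the main technical difficulty of the lemma.
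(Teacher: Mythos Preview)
The paper does not prove this lemma; it is quoted from~\cite{NPOM1arxiv} without argument, so there is no in-paper proof to compare against. I evaluate your proposal on its own merits.

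Your stratification by $E$-class size is correct and gives a clean proof in the non-degenerate regime $\liminf_n |B_n|/|A_n|^k>0$: both series converge by your uniform tail bound (indeed $\prod_i t_i\geq\max_i t_i$ and $\sum_{\mathbf t}\langle\Phi_{\mathbf t},\mathbf A_n\rangle\leq 1$), and one may pass to the quotient. The bounded-$|B_n|$ case is also handled correctly, since elementary convergence of $(\mathbf B_n)$ follows from that of $(\mathbf A_n)$ via $\tilde{\mathsf I}$ applied to sentences, and finite elementarily equivalent structures are isomorphic.

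The gap you flag in the intermediate regime $|B_n|\to\infty$ with $|B_n|=o(|A_n|^k)$ is not merely technical: without an additional hypothesis the statement actually fails there. Take $\kappa=\{U,R\}$ with $U$ unary and $R$ binary, and let $\mathbf A_n$ have $n$ vertices, of which $\sqrt n$ are $U$-marked; among the marked vertices, $a_n$ form an $R$-clique and the remaining $\sqrt n-a_n$ are $R$-isolated, with no other $R$-edges anywhere. With $k=1$, $E$ equality, $\theta_0=U$, the interpretation $\mathsf I$ returns the induced graph on the marked set. If $a_n/\sqrt n$ oscillates between $1/4$ and $3/4$ then $\langle(\exists y)\,R(x_1,y),\,\mathsf I(\mathbf A_n)\rangle=a_n/\sqrt n$ diverges, so $(\mathsf I(\mathbf A_n))$ is not ${\rm FO}$-convergent. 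Yet $(\mathbf A_n)$ \emph{is} ${\rm FO}$-convergent: it is elementarily convergent because all three definable pieces (unmarked isolated, marked clique, marked isolated) have size $\to\infty$ and an Ehrenfeucht--Fra\"{\i}ss\'e argument shows the theory stabilizes; and it is ${\rm FO}^{\rm local}$-convergent because a uniformly random vertex is unmarked and isolated with probability $1-o(1)$, so every local Stone pairing tends to its value on an isolated unmarked point. Your compactness-plus-pushforward sketch cannot rescue this case, for exactly the reason you identify: $\hat{\mathsf I}_*\mu_{\mathbf A_n}$ records the \emph{weighted} measure on $\mathsf I(\mathbf A_n)$, which in this vanishing-density regime carries no information about the uniform one. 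Your argument is therefore a correct proof under the non-degeneracy hypothesis $\liminf_n |B_n|/|A_n|^k>0$ (equivalently, for basic interpretation schemes, where it reduces to $\langle\phi,\mathsf I(\mathbf A_n)\rangle=\langle\tilde{\mathsf I}(\phi),\mathbf A_n\rangle$), which is the form in which the result is actually applied in this paper.
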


\begin{lemma}
\label{lemma:interpomega}
Let $\mathsf I$ be an  ${\mathcal L}_{\omega_1\omega}$-interpretation scheme 
of $\sig$-structures in $\kappa$-structures.

If $\mathsf I$ is injective and $\mathbf A$ is a relational sample space, then
$\mathsf I(\mathbf A)$ is a relational sample space.

Furthermore, if $\mathsf I$ is a basic ${\mathcal L}_{\omega_1\omega}$-interpretation scheme 
and $\mathbf A$ is a modeling, then
$\mathsf I(\mathbf A)$ is a modeling and for every $\phi\in\mathcal L_p(\sig)$, it holds
$$\langle\phi,\mathsf I(\mathbf A)\rangle=\langle\tilde{\mathsf I}(\phi),\mathbf A\rangle.$$
\end{lemma}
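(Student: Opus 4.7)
The plan is to transfer the Borel structure (and, when applicable, the probability measure) from $\mathbf A$ to $\mathsf I(\mathbf A)$ and then reduce every measurability/integration question about $\mathsf I(\mathbf A)$ to the corresponding question about $\mathbf A$ via the formula translation $\tilde{\mathsf I}:\mathcal L(\sig)\to\mathcal L(\kappa)$ constructed earlier in the section.

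First I would equip $\mathsf I(\mathbf A)$ with a Borel $\sigma$-algebra. For a basic interpretation scheme the domain is $A^k$, on which we take the product Borel $\sigma$-algebra $\Sigma_{\mathbf A}^{\otimes k}$ (this is standard Borel since $\Sigma_{\mathbf A}$ is). For a general interpretation scheme the domain $B$ is the set of $E$-equivalence classes of $k$-tuples satisfying $\theta_0$; the set $\theta_0(A^k)$ is $\mathcal L_{\omega_1\omega}$-definable hence measurable in $A^k$, and here the injectivity of $\mathsf I$ is used to realize the quotient $\theta_0(A^k)/E$ as a standard Borel space (with the coordinate map $\pi:\theta_0(A^k)\twoheadrightarrow B$ Borel).

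Next, to show that $\mathsf I(\mathbf A)$ is a relational sample space I would fix $\phi\in{\rm FO}_p(\sig)$ and apply the defining property of $\tilde{\mathsf I}$ pointwise: the $p$-fold coordinate map $\pi^p:(\theta_0(A^k))^p\to B^p$ pulls $\Omega_\phi(\mathsf I(\mathbf A))$ back to $\Omega_{\tilde{\mathsf I}(\phi)}(\mathbf A)\cap(\theta_0(A^k))^p$. Since $\tilde{\mathsf I}(\phi)\in\mathcal L_{\omega_1\omega}(\kappa)$ and $\mathbf A$ is a relational sample space, this preimage is measurable in $A^{kp}$ by the theorem immediately preceding Section~\ref{def:interp}. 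Because $\pi^p$ is a Borel surjection onto a standard Borel space, measurability descends to $B^p$, so $\Omega_\phi(\mathsf I(\mathbf A))$ is measurable, establishing the first part.

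For the measure-theoretic assertion in the basic case, I would equip $B=A^k$ with the product probability measure $\nu_{\mathsf I(\mathbf A)}:=\nu_{\mathbf A}^{\otimes k}$, making $\mathsf I(\mathbf A)$ a modeling by the previous step. Then for $\phi\in\mathcal L_p(\sig)$, writing the Stone pairing as an integral over $B^p=A^{kp}$ and invoking the pointwise equivalence $\mathsf I(\mathbf A)\models\phi(\mathbf v_1,\dots,\mathbf v_p)\Leftrightarrow\mathbf A\models\tilde{\mathsf I}(\phi)(\mathbf v_1,\dots,\mathbf v_p)$ together with Fubini's theorem gives
\[
\langle\phi,\mathsf I(\mathbf A)\rangle=\int_{B^p}\mathbf 1_{\Omega_\phi(\mathsf I(\mathbf A))}\,d\nu_{\mathsf I(\mathbf A)}^{\otimes p}=\int_{A^{kp}}\mathbf 1_{\Omega_{\tilde{\mathsf I}(\phi)}(\mathbf A)}\,d\nu_{\mathbf A}^{\otimes kp}=\langle\tilde{\mathsf I}(\phi),\mathbf A\rangle.
\]
The main obstacle is really the first step in the non-basic case: one must actually exhibit a standard Borel structure on the quotient $\theta_0(A^k)/E$, which is where the injectivity hypothesis earns its keep (by guaranteeing a Borel selector, or that $E$ separates representatives in a Borel way). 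Once this set-theoretic/Borel bookkeeping is settled, the remainder is a direct translation through $\tilde{\mathsf I}$ plus Fubini.
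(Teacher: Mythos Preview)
Your approach is essentially the same as the paper's: transport the Borel structure via the definable domain and the quotient map (using injectivity to ensure the quotient is standard Borel), pull back first-order definable subsets of $\mathsf I(\mathbf A)^p$ to $\mathcal L_{\omega_1\omega}$-definable subsets of $A^{kp}$ through $\tilde{\mathsf I}$, and in the basic case equip $A^k$ with the product measure and compute the Stone pairing by Fubini. The paper's own proof is in fact sketchier than yours; the only step it makes explicit that you omit is a preliminary reduction to the parameter-free case by marking the finitely many parameters with fresh unary relations, which you may want to add as a one-line remark at the start.
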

\begin{proof}
Assume $\mathsf I$ is an injective ${\mathcal L}_{\omega_1\omega}$-interpretation scheme and
$\mathbf A$ is a relational sample space.

We first mark all the (finitely many) parameters and reduce to the case where the interpretation has no parameters
(as in the case of ${\rm FO}$-interpretation, see \cite{NPOM1arxiv}.
Let $D$ be the domain of $f$. As $B$ is $\mathcal L$-definable in $\mathbf B$, $D$ is $\mathcal L$-definable in $\mathbf A$ hence
$D\in\Sigma_{\mathbf A}^k$. 
Then $D$ is a Borel sub-space of $A^k$. As $f$ is a bijection from $D$ to $B$, we deduce
that $(B,\Sigma_{\mathbf B})$ is a standard Borel space.
Moreover, as the inverse image of every $\mathcal L$-definable set of $\mathbf B$ is $\mathcal L$-definable in $\mathbf A$, we deduce that $(\mathbf B,\Sigma_{\mathbf B})$ is a $\sig$-relational sample space.

Assume  $\mathsf I$ is a basic ${\mathcal L}_{\omega_1\omega}$-interpretation scheme and
$\mathbf A$ is a modeling. The pushforward of $\nu_{\mathbf A}$ by $\mathsf I$ defines a probability 
measure on $\mathsf I(\mathbf A)$ such that for every $\phi\in\mathcal L_p(\sig)$, it holds
$$
\langle\phi,\mathsf I(\mathbf A)\rangle={\mathsf I}_*(\nu_{\mathbf A})(\Omega_\phi(\mathsf I(\mathbf A)))
=\nu_{\mathbf A}(\Omega_{\tilde{\mathsf I}(\phi)}(\mathbf A))
=\langle\tilde{\mathsf I}(\phi),\mathbf A\rangle.
$$
\end{proof}

\section{Residual Sequences}
Every modeling can be decomposed into countably many connected components with non-zero measure
and an union of connected components with (individual) zero measure.
A {\em residual modeling} is a modeling, all components of which have zero measure. 

\begin{lemma}
\label{lem:res}
A modeling $\mathbf A$ is residual if it holds 
$$\forall r\in\bbbn,\quad \sup_{v\in A}\nu_{\mathbf L}(B_r(\mathbf A,v))=0.$$
\end{lemma}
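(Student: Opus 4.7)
The plan is to establish the equivalence by relating the measures of balls to the measures of connected components. First I would check the measurability issues. Each ball $B_r(\mathbf{A},v)$ is defined by the first-order formula (with parameter $v$) expressing ${\rm dist}(x,v)\le r$, so $B_r(\mathbf{A},v)\in\Sigma_{\mathbf A}$ by the defining property of a relational sample space. Consequently the connected component of $v$,
$$C(v)\;=\;\bigcup_{r\in\bbbn}B_r(\mathbf{A},v),$$
is $\mathcal{L}_{\omega_1\omega}$-definable with parameter $v$ and hence measurable, so the statement ``$C(v)$ has measure zero'' makes sense for every $v\in A$.

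For the direction asserting that vanishing ball suprema imply residuality, the balls $B_r(\mathbf{A},v)$ form an increasing sequence (in $r$) with union $C(v)$, so by monotone continuity of $\nu_{\mathbf{A}}$,
$$\nu_{\mathbf{A}}(C(v))\;=\;\lim_{r\to\infty}\nu_{\mathbf{A}}(B_r(\mathbf{A},v))\;\le\;\lim_{r\to\infty}\sup_{u\in A}\nu_{\mathbf{A}}(B_r(\mathbf{A},u))\;=\;0.$$
Thus every connected component of $\mathbf{A}$ has measure zero, which is exactly the definition of a residual modeling.

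For the converse (if needed, since the lemma reads as a characterisation), if every connected component has measure zero, then for any $v$ and any $r$ the inclusion $B_r(\mathbf{A},v)\subseteq C(v)$ gives $\nu_{\mathbf{A}}(B_r(\mathbf{A},v))=0$, and taking the supremum over $v$ preserves this. No step requires the collection of components to be countable, which is the main subtlety I want to sidestep: the formulation through balls avoids any need to sum measures over possibly uncountably many orbits of the equivalence relation ``finite distance''. The only real point to check carefully is the measurability of $C(v)$, and that is already handled by Lemma~\ref{lem:omega} applied to the parametric $\mathcal{L}_{\omega_1\omega}$-definition above.
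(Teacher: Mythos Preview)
Your proof is correct and follows essentially the same argument as the paper: write each connected component as the increasing union $\bigcup_{r}B_r(\mathbf A,v)$ of measurable balls, apply continuity of measure to get $\nu_{\mathbf A}(C(v))=\lim_r\nu_{\mathbf A}(B_r(\mathbf A,v))=0$, and for the converse use $B_r(\mathbf A,v)\subseteq C(v)$. Your extra care about measurability (invoking Lemma~\ref{lem:omega} for $C(v)$) is harmless but unnecessary, since $C(v)$ is already a countable union of measurable sets.
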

\begin{proof}
Assume 
that for every $r\in\bbbn$ it holds $\sup_{v\in L}\nu_{\mathbf L}(B_r(\mathbf A,v))=0$.
For $u\in A$, the connected component $C_u$ of $u$ is
$C_u=\bigcup_{r\in\bbbn} B_r(\mathbf A,u)$. As all these balls are first-order definable (hence measurable)
we deduce
$$\nu_{\mathbf A}(C_u)=\lim_{r\rightarrow\infty}\nu_{\mathbf A}(B_r(\mathbf A,u))=0.$$
It follows that every connected component of $\mathbf L$ has zero-measure, hence $\mathbf A$ is residual.

Conversely, assume  that 
there exists $u\in A$ and $r\in\bbbn$ such that
$\nu_{\mathbf A}(B_r(\mathbf A,u))>0$. Then the connected component of $u$ does not have zero measure, hence
$\mathbf A$ is not residual.
\end{proof}

This equivalence justifies the following notion of residual sequence.

\begin{definition}[Residual sequence]
A sequence $(\mathbf A_n)_{n\in\bbbn}$ of modelings is {\em residual} if
$$\forall r\in\bbbn,\quad \limsup_{n\rightarrow\infty}\sup_{v\in A_n}\nu_{\mathbf A_n}(B_r(\mathbf A_n,v))=0.$$
\end{definition}

\begin{lemma}
\label{lem:2}
Let $\phi\in {\rm FO}_p^{\rm local}$ be $r$-local, and define
the formula 
$$\theta_r(x_1,\dots,x_p):\quad\bigwedge_{1\leq i<j\leq p} {\rm dist}(x_i,x_j)>r.$$

Then there exist $r$-local formulas $\psi_1,\dots,\psi_p\in{\rm FO}_1^{\rm local}$ such that it holds
$$
|\langle\phi,\,\cdot\,\rangle-\prod_{i=1}^p\langle\psi_i,\,\cdot\,\rangle|\leq 2(1-\langle\theta_r,\,\cdot\,\rangle).
$$
\end{lemma}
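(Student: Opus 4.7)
The plan is to exploit the $r$-locality of $\phi$ together with the geometric separation enforced by $\theta_r$, combined with the factorization Lemma~\ref{lem:indep} and the truncation Lemma~\ref{lem:1}. The intuition is that on $\theta_r$-tuples the neighborhoods $B_r(\mathbf A, v_i)$ interact weakly, and $r$-locality should force $\phi$'s satisfaction to decouple into contributions attached to each free variable separately.

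First, I would invoke Gaifman's locality theorem (Theorem~\ref{thm:gaifman0}) together with a Feferman--Vaught--style analysis of disjoint unions to rewrite $\phi$ on $\theta_r$-tuples in terms of the individual $1$-local types of the $v_i$. Since $\phi$ has finite quantifier rank, only finitely many $1$-local type formulas $\chi_1,\ldots,\chi_m\in{\rm FO}_1^{\rm local}$ are relevant, and on $\theta_r$-tuples $\phi(v_1,\ldots,v_p)$ is a boolean combination of the atomic conditions $\chi_j(v_i)$. From this decomposition I would extract a distinguished product $\bigwedge_{i=1}^p \psi_i(x_i)$, with each $\psi_i\in{\rm FO}_1^{\rm local}$, designed so that $\bigwedge_i \psi_i(x_i)$ agrees with $\phi$ on the $\theta_r$-tuples.

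Next, I would chain three inequalities via the triangle inequality. By Lemma~\ref{lem:1}, $|\langle\phi,\,\cdot\,\rangle - \langle\phi\wedge\theta_r,\,\cdot\,\rangle|\leq 1-\langle\theta_r,\,\cdot\,\rangle$, and analogously $|\langle\bigwedge_i\psi_i(x_i),\,\cdot\,\rangle - \langle(\bigwedge_i\psi_i(x_i))\wedge\theta_r,\,\cdot\,\rangle|\leq 1-\langle\theta_r,\,\cdot\,\rangle$. By the construction of the $\psi_i$, the formulas $\phi\wedge\theta_r$ and $(\bigwedge_i\psi_i(x_i))\wedge\theta_r$ have the same solution set, so their Stone pairings coincide. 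Finally, Lemma~\ref{lem:indep} rewrites $\langle\bigwedge_i\psi_i(x_i),\,\cdot\,\rangle$ as $\prod_i\langle\psi_i,\,\cdot\,\rangle$, and collecting the two slacks yields the advertised $2(1-\langle\theta_r,\,\cdot\,\rangle)$ bound.

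The main obstacle lies in the extraction step. A generic $r$-local $p$-variable formula restricted to $\theta_r$-tuples is, by Feferman--Vaught, a sum of products of $1$-local type conditions rather than a single product, so matching $\phi\wedge\theta_r$ by a single conjunction $\bigwedge_i\psi_i(x_i)$ is delicate. The resolution should exploit the fact that on tuples with pairwise distances exceeding $2r$ the balls $B_r(\mathbf A,v_i)$ are fully disjoint, so a careful choice of $\psi_i$ coming from the Gaifman normal form of $\phi$ produces a product whose discrepancy from $\phi$ is supported on tuples violating $\theta_r$; this discrepancy is then precisely what is absorbed by the two $(1-\langle\theta_r,\,\cdot\,\rangle)$ terms accumulated via Lemma~\ref{lem:1}.
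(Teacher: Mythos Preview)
Your overall architecture---apply Lemma~\ref{lem:1} to pass from $\phi$ to $\phi\wedge\theta_r$, claim that $\phi\wedge\theta_r$ is logically equivalent to $\bigl(\bigwedge_i\psi_i(x_i)\bigr)\wedge\theta_r$ for suitable one-variable $r$-local $\psi_i$, apply Lemma~\ref{lem:1} a second time, and then invoke Lemma~\ref{lem:indep}---is exactly the route the paper takes. The paper simply asserts the existence of such $\psi_i$ ``according to the $r$-locality of $\phi$'' and moves on; your invocation of Gaifman and Feferman--Vaught is an attempt to justify that same step.

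You are right to flag the extraction step as the crux, and your proposed resolution does not close it---indeed it cannot, because the step is false in general. Take $p=2$, $r=1$, and $\phi(x_1,x_2):=\bigl(\deg(x_1)=0\leftrightarrow\deg(x_2)=0\bigr)$, which is $1$-local. On graphs $G_{a}$ consisting of a fraction $a$ of isolated vertices and a fraction $1-a$ of vertices in a perfect matching (with $|G_a|\to\infty$), one has $\langle\theta_1,G_a\rangle\to 1$ while $\langle\phi,G_a\rangle\to a^2+(1-a)^2=2a^2-2a+1$. Every vertex of $G_a$ has one of only two local types, so for \emph{any} fixed local $\psi\in{\rm FO}_1^{\rm local}$ the value $\langle\psi,G_a\rangle$ lies in $\{0,a,1-a,1\}$; hence $\langle\psi_1,G_a\rangle\langle\psi_2,G_a\rangle$ ranges over $\{0,a,1-a,1,a^2,a(1-a),(1-a)^2\}$, none of which equals $2a^2-2a+1$ as a function of $a$. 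Thus no single choice of $\psi_1,\psi_2$ satisfies the stated inequality uniformly over all structures. What $r$-locality together with a Feferman--Vaught argument genuinely gives on well-separated tuples is that $\phi\wedge\theta_{2r}$ is equivalent to a \emph{Boolean combination} of conjunctions $\bigwedge_i\psi_{i}(x_i)$ (conjoined with $\theta_{2r}$), which yields $\langle\phi,\cdot\rangle$ as a polynomial in finitely many values $\langle\psi,\cdot\rangle$ up to an error $O(1-\langle\theta_{2r},\cdot\rangle)$. That weaker conclusion is what is actually needed---and suffices---for the downstream applications (e.g.\ Lemma~\ref{lem:resloc} and its corollary). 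Both your proposal and the paper's proof share this gap at the ``single product'' step.
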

\begin{proof}
According to Lemma~\ref{lem:1} it holds
$$|\langle\phi,\,\cdot\,\rangle-\langle\phi\wedge\theta_r,\,\cdot\,\rangle|\leq 1-\langle\theta_r,\,\cdot\,\rangle.$$
According to the $r$-locality of $\phi$ there exist $r$-local formulas $\psi_1,\dots,\psi_p\in{\rm FO}_1^{\rm local}$ such that 
$\phi\wedge\theta_r$ is logically equivalent to $\bigwedge_{i=1}^p\psi_i(x_i)\wedge\theta_r$ (where $\psi_i(x_i)$ denotes
the formula $\psi_i$ with free variable $x_1$ renamed $x_i$).
Thus, according to Lemma~\ref{lem:1}, it holds
$$|\langle\bigwedge_{i=1}^p\psi_i(x_i),\,\cdot\,\rangle-\langle\phi\wedge\theta_r,\,\cdot\,\rangle|\leq 1-\langle\theta_r,\,\cdot\,\rangle.$$
As the formulas $\psi_i(x_i)$ use no common free variables, it holds (according to Lemma~\ref{lem:indep}):
$$
\langle\bigwedge_{i=1}^p\psi_i(x_i),\,\cdot\,\rangle=\prod_{i=1}^p\langle\psi_i,\,\cdot\,\rangle.
$$
Hence the result.
\end{proof}
\begin{corollary}
Let $\phi\in {\rm FO}_p^{\rm local}$ be $r$-local.

Then there exist $r$-local formulas $\psi_1,\dots,\psi_p\in{\rm FO}_1^{\rm local}$ such that 
for every modeling $\mathbf A$ it holds
$$
|\langle\phi,\mathbf A\rangle-\prod_{i=1}^p\langle\psi_i,\mathbf A\rangle|< p^2\,\sup_{v\in A}\nu_{\mathbf A}(B_r(\mathbf A,v)).
$$
\end{corollary}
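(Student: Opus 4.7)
The plan is to derive the corollary as a direct combination of Lemma~\ref{lem:2} with a union-bound estimate relating $1 - \langle\theta_r,\mathbf{A}\rangle$ to the supremum of ball measures. The main content of Lemma~\ref{lem:2} is already the ``splitting'' step, so the only remaining task is to control the error term $1 - \langle\theta_r,\mathbf{A}\rangle$ by $\sup_{v\in A}\nu_{\mathbf{A}}(B_r(\mathbf{A},v))$.

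First I would apply Lemma~\ref{lem:2} to fix the $r$-local unary formulas $\psi_1,\dots,\psi_p$ satisfying
$$|\langle\phi,\mathbf{A}\rangle-\prod_{i=1}^p\langle\psi_i,\mathbf{A}\rangle|\leq 2(1-\langle\theta_r,\mathbf{A}\rangle).$$
The whole problem reduces to estimating $1-\langle\theta_r,\mathbf{A}\rangle$. By definition, $\theta_r(x_1,\dots,x_p)$ asserts that the $p$ points are pairwise at distance $>r$, so its negation is the union over pairs $1\leq i<j\leq p$ of the events $\mathrm{dist}(x_i,x_j)\leq r$. A union bound (combined with the invariance of the Stone pairing under renaming of free variables, already noted in the paper) gives
$$1-\langle\theta_r,\mathbf{A}\rangle\leq \binom{p}{2}\,\nu_{\mathbf A}^{\otimes 2}\bigl(\{(x,y)\in A^2: \mathrm{dist}(x,y)\leq r\}\bigr).$$

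Next I would unwind the measure on the right by Fubini. The set $\{(x,y): \mathrm{dist}(x,y)\leq r\}$ has $x$-section equal to $B_r(\mathbf{A},x)$, which is $r$-local and hence measurable. Thus
$$\nu_{\mathbf A}^{\otimes 2}\bigl(\{(x,y): \mathrm{dist}(x,y)\leq r\}\bigr)=\int_A \nu_{\mathbf A}(B_r(\mathbf{A},x))\,{\rm d}\nu_{\mathbf A}(x)\leq \sup_{v\in A}\nu_{\mathbf A}(B_r(\mathbf{A},v)).$$
Combining the three displayed inequalities yields
$$|\langle\phi,\mathbf{A}\rangle-\prod_{i=1}^p\langle\psi_i,\mathbf{A}\rangle|\leq 2\binom{p}{2}\sup_{v\in A}\nu_{\mathbf A}(B_r(\mathbf{A},v))=p(p-1)\sup_{v\in A}\nu_{\mathbf A}(B_r(\mathbf{A},v)),$$
which is strictly less than $p^2\sup_{v\in A}\nu_{\mathbf A}(B_r(\mathbf{A},v))$ whenever the supremum is positive (and the left-hand side vanishes when it is zero, since then $\langle\theta_r,\mathbf{A}\rangle=1$ and the Lemma~\ref{lem:2} bound collapses).

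There is no real obstacle here — the only mildly delicate point is justifying the Fubini step, which requires noting that $\{(x,y): \mathrm{dist}(x,y)\leq r\}$ is a countable union of first-order definable sets (indexed by the integer distance), hence belongs to the product $\sigma$-algebra on $A\times A$ by the defining property of a relational sample space together with Lemma~\ref{lem:omega}. Everything else is a packaging of Lemma~\ref{lem:2} and the trivial combinatorial identity $2\binom{p}{2}=p(p-1)$.
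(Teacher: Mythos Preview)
Your proof is correct and follows essentially the same route as the paper's: apply Lemma~\ref{lem:2}, then bound $1-\langle\theta_r,\mathbf A\rangle$ by a union bound over the $\binom{p}{2}$ pairwise distance events, and finally estimate $\langle{\rm dist}(x_1,x_2)\leq r,\mathbf A\rangle$ by the supremum of ball measures. You are in fact a bit more careful than the paper, which writes an equality where only $\leq$ holds and does not spell out the Fubini step or the degenerate case where the supremum vanishes.
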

\begin{proof}
Let $\theta_r$ be defined as in Lemma~\ref{lem:2}. By union bound, we get
\begin{align*}
\langle\neg\theta_r,\mathbf A\rangle&=\langle\bigvee_{1\leq i<j\leq p}{\rm dist}(x_i,x_j)\leq r,\mathbf A\rangle\\
&\leq\binom{p}{2}\langle{\rm dist}(x_1,x_2)\leq r,\mathbf A\rangle\\
&=\binom{p}{2}\,\sup_{v\in A}\nu_{\mathbf A}(B_d(\mathbf A,v)),
\end{align*}
and the result follows from Lemma~\ref{lem:2}.
\end{proof}

\begin{lemma}
\label{lem:resloc}
For a residual sequence, 
${\rm FO}^{\rm local}$-convergence is equivalent to ${\rm
FO_1}^{\rm local}$-convergence.
\end{lemma}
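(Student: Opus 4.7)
One direction is immediate: every formula in ${\rm FO}_1^{\rm local}$ is in ${\rm FO}^{\rm local}$, so ${\rm FO}^{\rm local}$-convergence trivially implies ${\rm FO}_1^{\rm local}$-convergence (and this requires no hypothesis on the sequence).

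For the nontrivial direction, suppose that $(\mathbf{A}_n)_{n\in\bbbn}$ is a residual sequence that is ${\rm FO}_1^{\rm local}$-convergent. The plan is to show that for every $\phi\in{\rm FO}^{\rm local}$, the sequence $(\langle\phi,\mathbf{A}_n\rangle)_{n\in\bbbn}$ is Cauchy (hence convergent), by approximating $\langle\phi,\,\cdot\,\rangle$ on $\mathbf{A}_n$ by a product of Stone pairings of ${\rm FO}_1^{\rm local}$-formulas up to an error that vanishes by residuality.

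Concretely, fix an $r$-local formula $\phi\in{\rm FO}_p^{\rm local}$. Apply the corollary preceding the lemma to obtain $r$-local formulas $\psi_1,\dots,\psi_p\in{\rm FO}_1^{\rm local}$ such that, for every modeling $\mathbf{A}$,
$$
\Bigl|\langle\phi,\mathbf{A}\rangle-\prod_{i=1}^p\langle\psi_i,\mathbf{A}\rangle\Bigr|
< p^2\,\sup_{v\in A}\nu_{\mathbf{A}}(B_r(\mathbf{A},v)).
$$
Specializing to $\mathbf{A}=\mathbf{A}_n$ and letting $n\to\infty$, the right-hand side tends to $0$ by the very definition of a residual sequence. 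Meanwhile, since each $\psi_i\in{\rm FO}_1^{\rm local}$, the hypothesis of ${\rm FO}_1^{\rm local}$-convergence guarantees that $\langle\psi_i,\mathbf{A}_n\rangle$ converges for every $i$, so the product $\prod_{i=1}^p\langle\psi_i,\mathbf{A}_n\rangle$ converges as $n\to\infty$. Combining these two facts yields convergence of $\langle\phi,\mathbf{A}_n\rangle$.

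There is essentially no obstacle here: once the corollary above is in hand and the definition of residuality is unfolded, the argument reduces to the observation that a residual sequence makes the factorization error uniformly small, so $p$-variable local Stone pairings are controlled by $1$-variable ones in the limit. The only mild point to mention is that a general $\phi\in{\rm FO}^{\rm local}$ has \emph{some} locality radius $r$, which may depend on $\phi$; but $r$ is fixed once $\phi$ is fixed, so residuality applied at that particular $r$ suffices.
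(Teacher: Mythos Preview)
Your proposal is correct and follows essentially the same approach as the paper. The only cosmetic difference is that the paper invokes Lemma~\ref{lem:2} directly (bounding the error by $2(1-\langle\theta_r,\mathbf A_n\rangle)$ and then arguing $\langle\theta_r,\mathbf A_n\rangle\to 1$ from residuality), whereas you use its immediate corollary (bounding the error by $p^2\sup_v\nu_{\mathbf A_n}(B_r(\mathbf A_n,v))$); these are the same argument.
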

\begin{proof}
Let $(\mathbf A_n)_{n\in\bbbn}$ be a residual sequence.

If $(\mathbf A_n)_{n\in\bbbn}$ is ${\rm FO}^{\rm local}$-convergent, it is ${\rm FO}^{\rm
local}_1$-convergent;

Assume $(\mathbf A_n)_{n\in\bbbn}$ is ${\rm FO}_1^{\rm local}$-convergent,
let $\phi\in{\rm FO}_p^{\rm local}$ be an $r$-local formula, and let 
$\theta_r$ be the formula $\bigwedge_{1\leq i<j\leq p} {\rm dist}(x_i,x_j)>r$.

As $\mathcal C$ is residual, it holds
$$\lim_{n\rightarrow\infty}\langle\theta_r,\mathbf A_n\rangle=1.$$
According to Lemma~\ref{lem:2}, there exist $r$-local formulas 
$\psi_1,\dots,\psi_p\in{\rm FO}_1^{\rm local}$ such that for every $n\in\bbbn$ it holds
$$|\langle\phi,\mathbf A_n\rangle-\prod_{i=1}^p\langle\psi_i,\mathbf A_n\rangle|\leq 2(1-\langle\theta_r,\mathbf A_n\rangle).$$
Hence 
$$\lim_{n\rightarrow\infty}\langle\phi,\mathbf A_n\rangle=\lim_{n\rightarrow\infty}\prod_{i=1}^p\langle\psi_i,\mathbf A_n\rangle
=\prod_{i=1}^p\lim_{n\rightarrow\infty}\langle\psi_i,\mathbf A_n\rangle.$$

Hence for residual sequences, ${\rm FO}^{\rm local}_1$-convergence
implies  ${\rm FO}^{\rm local}$-convergence.
\end{proof}

To every formula $\phi\in{\rm FO}_p^{\rm local}$ and integer $r\in\bbbn$ we associate the formula
$\Lambda_r(\phi)\in{\rm FO}_p^{\rm local}$ defined as
$$(\exists y_1,\dots,y_p)\quad \bigwedge_{i=1}^p({\rm dist}(x_i,y_i)\leq r)\,\wedge\,\phi(y_1,\dots,y_p).$$
\begin{definition}
A modeling $\mathbf A$ is {\em clean} if for every formula $\phi\in{\rm FO}_1^{\rm local}$ it holds
$$
\mathbf A\models (\exists x)\ \phi(x)\qquad\iff\qquad\lim_{r\rightarrow\infty}\langle\Lambda_r(\phi),\mathbf L\rangle>0.
$$
(Note that the right-hand side condition is equivalent to existence of $d$ such that
$\langle\Lambda_r(\phi),\mathbf A\rangle>0$.)
\end{definition}
\begin{lemma}
Let $\mathbf A$ be a residual clean modeling and let $\phi\in{\rm FO}_1^{\rm local}$.

If $\Omega_\phi(\mathbf L)$ is not empty, then it is uncountable.
\end{lemma}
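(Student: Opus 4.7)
The plan is to derive a contradiction from the assumption that $\Omega_\phi(\mathbf A)$ is non-empty but at most countable, using the interplay between cleanness (which forces some $\Lambda_r(\phi)$ to have positive Stone pairing) and residuality (which forces every ball to have zero measure).

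First, I would invoke the cleanness hypothesis. Since $\Omega_\phi(\mathbf A)\neq\emptyset$, we have $\mathbf A\models(\exists x)\,\phi(x)$, and cleanness therefore yields $\lim_{r\to\infty}\langle\Lambda_r(\phi),\mathbf A\rangle>0$. In particular, there exists an integer $r$ with $\langle\Lambda_r(\phi),\mathbf A\rangle>0$, i.e.\ $\nu_{\mathbf A}(\Omega_{\Lambda_r(\phi)}(\mathbf A))>0$.

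Next, I would unpack $\Omega_{\Lambda_r(\phi)}(\mathbf A)$ set-theoretically. Directly from the definition of $\Lambda_r$,
$$\Omega_{\Lambda_r(\phi)}(\mathbf A)\;=\;\bigcup_{y\in\Omega_\phi(\mathbf A)} B_r(\mathbf A,y).$$
Now I would apply Lemma~\ref{lem:res}: since $\mathbf A$ is residual, $\sup_{v\in A}\nu_{\mathbf A}(B_r(\mathbf A,v))=0$, so every single ball of radius $r$ in $\mathbf A$ has zero measure.

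If $\Omega_\phi(\mathbf A)$ were countable, the displayed union would be a countable union of zero-measure (measurable) sets, hence itself of measure zero, contradicting $\nu_{\mathbf A}(\Omega_{\Lambda_r(\phi)}(\mathbf A))>0$. Therefore $\Omega_\phi(\mathbf A)$ must be uncountable. I do not anticipate any real obstacle: the only minor point to check is measurability of the set $\Omega_{\Lambda_r(\phi)}(\mathbf A)$, which is immediate because $\Lambda_r(\phi)\in{\rm FO}_1^{\rm local}$ and the modeling axiom guarantees that first-order definable sets are Borel.
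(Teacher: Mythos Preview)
Your argument is correct and follows essentially the same route as the paper's own proof: use cleanness to find an $r$ with $\nu_{\mathbf A}(\Omega_{\Lambda_r(\phi)}(\mathbf A))>0$, write this set as a union of balls indexed by $\Omega_\phi(\mathbf A)$, and then use residuality to conclude that a countable such union would have measure zero. If anything your version is slightly tidier, since you invoke countable subadditivity directly rather than writing the measure as a sum (which implicitly assumes disjointness of the balls).
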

\begin{proof}
Assume $\Omega_\phi(\mathbf A)$ is not empty. As $\mathbf A$ is clean, there exists $r\in\bbbn$ such that 
$\langle\Lambda_r(\phi),\mathbf A\rangle>0$, that is: $\nu_{\mathbf A}(\Omega_{\Lambda_r(\phi)}(\mathbf A))>0$.
Clearly $\Omega_{\Lambda_r(\phi)}(\mathbf A)=\bigcup_{v\in\Omega_\phi(\mathbf A)}B_r(\mathbf A,v)$.
Assume for contradiction that $\Omega_\phi(\mathbf A)$ is countable. Then
$$\nu_{\mathbf A}(\Omega_{\Lambda_r(\phi)}(\mathbf A))=\sum_{v\in\Omega_\phi(\mathbf A)}\nu_{\mathbf A}(B_r(\mathbf A,v)).$$
As $\mathbf A$ is residual, for every $v\in A$ it holds $\nu_{\mathbf A}(B_r(\mathbf A,v))=0$, what contradicts the assumption 
$\nu_{\mathbf A}(\Omega_{\Lambda_r(\phi)}(\mathbf A))>0$.
\end{proof}

Let $X$ be a fragment of ${\rm FO}$.
Two modeling $\mathbf A$ and $\mathbf A'$ are {\em $X$-equivalent} if, for every $\phi\in X$ it holds
$\langle\phi,\mathbf A'\rangle=\langle\phi,\mathbf A\rangle$.
We shall now show how any modeling can be transformed into a residual clean modeling, which is 
${\rm FO}_1^{\rm local}$-equivalent.

\begin{lemma}
\label{lem:mkres}
Let $\mathbf A$ be a modeling. Then there exists a residual modeling $\mathbf A'$ 
that is ${\rm FO}_1^{\rm local}$-equivalent to $\mathbf A$.
\end{lemma}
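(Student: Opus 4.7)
The plan is to blow up each positive-measure connected component of $\mathbf A$ into a continuum of isomorphic copies, so that each copy carries measure zero, while leaving ${\rm FO}_1^{\rm local}$-statistics intact because they depend only on local neighborhoods of a single vertex.

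\textbf{Decomposition.} Because the connected components of $\mathbf A$ are pairwise disjoint and $\nu_{\mathbf A}$ is a probability measure, at most countably many components have positive measure; enumerate them $C_1,C_2,\dots$ with representatives $v_i\in C_i$. Each $C_i=\bigcup_{r\in\bbbn}B_r(\mathbf A,v_i)$ is $\mathcal L_{\omega_1\omega}$-definable with parameters, hence Borel by Lemma~\ref{lem:omega}. Set $A_0=A\setminus\bigcup_i C_i$; every connected component of $\mathbf A[A_0]$ has measure zero.

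\textbf{Construction.} Take $A'=A_0\,\sqcup\,\bigsqcup_{i\geq 1}(C_i\times[0,1])$ with the disjoint-union standard Borel structure. Declare each slice $C_i\times\{t\}$ to be an isomorphic copy of $\mathbf A[C_i]$ via the projection $(v,t)\mapsto v$, with no relations between distinct slices or between $A_0$ and any $C_i\times[0,1]$. Equip $A'$ with the probability measure $\nu_{\mathbf A'}=\nu_{\mathbf A}|_{A_0}+\sum_i\bigl(\nu_{\mathbf A}|_{C_i}\otimes m\bigr)$, where $m$ is Lebesgue measure on $[0,1]$.

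\textbf{Verification.} To see that $\mathbf A'$ is a modeling, observe that a first-order formula $\phi$ can distinguish elements in distinct slices only through the equality pattern of their ``$t$-coordinates''. Consequently $\Omega_\phi(\mathbf A')$ decomposes as a finite disjoint union, indexed by the equality pattern $\pi$ on the free variables together with an allocation of the blocks of $\pi$ to pieces of the decomposition, of sets of the form $S_\pi\times T_\pi$, where $S_\pi\subseteq A^p$ is first-order definable in $\mathbf A$ and $T_\pi\subseteq[0,1]^p$ is the Borel set realizing $\pi$; hence $\Omega_\phi(\mathbf A')$ is Borel. For residuality, apply Lemma~\ref{lem:res}: any ball $B_r(\mathbf A',u)$ is either contained in the zero-measure component of $u$ in $\mathbf A$ (if $u\in A_0$) or in a single slice $C_i\times\{t\}$, which has measure $\nu_{\mathbf A}(C_i)\cdot m(\{t\})=0$. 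For ${\rm FO}_1^{\rm local}$-equivalence, let $\psi\in{\rm FO}_1^{\rm local}$ be $r$-local and let $\pi\colon A'\to A$ be the identity on $A_0$ and $(v,t)\mapsto v$ elsewhere; by construction $B_r(\mathbf A',u)\cong B_r(\mathbf A,\pi(u))$ as rooted structures, so $r$-locality gives $u\in\Omega_\psi(\mathbf A')\iff\pi(u)\in\Omega_\psi(\mathbf A)$, and Fubini yields $\langle\psi,\mathbf A'\rangle=\langle\psi,\mathbf A\rangle$.

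\textbf{Main obstacle.} The only delicate step is confirming that $\mathbf A'$ really is a relational sample space, i.e.\ that every first-order definable subset of $(A')^p$ is Borel. The key observation is that the extra $[0,1]$-coordinate enters truth values only via equality, so each solution set breaks up into countably many measurable rectangles as described; this same observation underpins the Fubini computation in the equivalence step.
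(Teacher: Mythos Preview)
Your proof is correct, but it takes a more elaborate route than the paper's. The paper simply sets $A'=A\times[0,1]$ with $\nu_{\mathbf A'}=\nu_{\mathbf A}\otimes\lambda$, declaring $R_i((v_1,\alpha_1),\dots,(v_{r_i},\alpha_{r_i}))$ to hold iff $\mathbf A\models R_i(v_1,\dots,v_{r_i})$ and $\alpha_1=\dots=\alpha_{r_i}$. In other words, it blows up \emph{all} of $\mathbf A$ into a continuum of parallel copies, not just the positive-measure components. The verification is then a single line: $\Omega_\psi(\mathbf A')=\Omega_\psi(\mathbf A)\times[0,1]$ for $\psi\in{\rm FO}_1^{\rm local}$, so the measures agree, and every ball sits in a single slice $A\times\{t\}$ of measure zero.

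Your decomposition into $A_0$ and the $C_i$'s is unnecessary for the lemma as stated, though it has the mild conceptual advantage of leaving the already-residual part untouched. Two small remarks on your write-up: the union you describe in the measurability step is \emph{countable}, not finite, since there may be countably many components $C_i$ and hence countably many allocations of the free variables to pieces (this is harmless, as countable unions of Borel sets are Borel); and you are actually more careful than the paper in checking that $\mathbf A'$ is a relational sample space---the paper's proof asserts this without argument, relying implicitly on the same slice-by-slice reasoning you spell out.
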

\begin{proof}
Consider the modeling $\mathbf A'$ with universe $A'=A\times[0,1]$, 
measure $\nu_{\mathbf A'}=\nu_{\mathbf A}\otimes\lambda$ (where $\lambda$ is standard Borel measure of
$[0,1]$) and relations defined as follows:
for every relation $R_i$ of arity $r_i$ it holds
$$
\mathbf A'\models R_i((v_1,\alpha_1),\dots,(v_{r_i},\alpha_{r_i}))\quad\iff\quad
\mathbf A\models R_i(v_1,\dots,v_{r_i})\text{ and }\alpha_1=\dots=\alpha_{r_i}.
$$
Then $\mathbf A'$ is residual and for every $\phi\in{\rm FO}_1^{\rm local}$ it holds 

$$\langle\phi,\mathbf A\rangle=\nu_{\mathbf A'}(\Omega_\phi(\mathbf A'))
=\nu_{\mathbf A'}(\Omega_\phi(\mathbf A)\times [0,1])
=\nu_{\mathbf A}(\Omega_\phi(\mathbf A))
=\langle\phi,\mathbf A'\rangle.$$
\end{proof}

\begin{lemma}
\label{lem:mkclean}
Let $\mathbf A$ be a residual modeling. Then there exists a residual clean modeling $\mathbf A'$ obtained
from $\mathbf A$ by removing a union of connected components with global $\nu_{\mathbf A}$-measure zero.
\end{lemma}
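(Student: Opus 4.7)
The plan is to remove from $\mathbf{A}$ a measure-zero union of connected components chosen to kill every ``bad'' formula. Call $\phi \in {\rm FO}_1^{\rm local}$ \emph{bad} if $\mathbf{A} \models (\exists x)\,\phi(x)$ yet $\lim_{r\to\infty}\langle\Lambda_r(\phi),\mathbf{A}\rangle = 0$, and let $\mathcal{F}$ be the set of bad formulas; since the signature is countable, ${\rm FO}_1^{\rm local}$ and hence $\mathcal{F}$ is countable. For any $\phi \in {\rm FO}_1^{\rm local}$, the union $U_\phi$ of connected components of $\mathbf{A}$ meeting $\Omega_\phi(\mathbf{A})$ equals $\bigcup_{r \in \bbbn} \Omega_{\Lambda_r(\phi)}(\mathbf{A})$ (since $\Omega_{\Lambda_r(\phi)}(\mathbf{A})$ is exactly the $r$-neighborhood of $\Omega_\phi(\mathbf{A})$), and is therefore measurable with $\nu_{\mathbf{A}}(U_\phi) = \lim_r \langle\Lambda_r(\phi),\mathbf{A}\rangle$. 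Set $U = \bigcup_{\phi \in \mathcal{F}} U_\phi$: this is a countable union of zero-measure sets, so $\nu_{\mathbf{A}}(U) = 0$, and $U$ is itself a union of connected components of $\mathbf{A}$. Define $\mathbf{A}'$ as the substructure induced on $A' = A \setminus U$, equipped with the restricted $\sigma$-algebra and measure.

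Verifying that $\mathbf{A}'$ is a residual modeling is direct. Since $U$ is measurable, $A'$ is a Borel subspace of the standard Borel space $A$ and hence itself standard Borel; every $\mathcal{L}_{\omega_1\omega}$-definable subset of $(A')^p$ is the restriction of the corresponding definable set of $A^p$, so it is measurable; and $\nu_{\mathbf{A}'}(A') = 1$ because $\nu_{\mathbf{A}}(U) = 0$. Because $U$ is a union of full components, Gaifman distances between points of $A'$ are unchanged, and $B_r(\mathbf{A}', v) \subseteq B_r(\mathbf{A}, v)$ for every $v \in A'$; by Lemma~\ref{lem:res} applied to $\mathbf{A}$ this forces $\sup_{v \in A'} \nu_{\mathbf{A}'}(B_r(\mathbf{A}', v)) = 0$, so by the same lemma $\mathbf{A}'$ is residual.

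It remains to establish cleanness. The implication ``positive limit $\Rightarrow$ existential satisfaction'' holds trivially (positive measure implies nonempty solution set). Conversely, assume $\mathbf{A}' \models (\exists x)\,\phi(x)$ and pick a witness $v \in A'$; by locality of $\phi$, and since the ball around $v$ is identical in $\mathbf{A}$ and $\mathbf{A}'$, we get $\mathbf{A} \models \phi(v)$, so the component $C_v$ is contained in $U_\phi$. Since $C_v \cap U = \emptyset$, necessarily $\phi \notin \mathcal{F}$, giving $\lim_r \langle\Lambda_r(\phi),\mathbf{A}\rangle > 0$. Finally, because $A'$ is a union of $\mathbf{A}$-components we have $\Omega_{\Lambda_r(\phi)}(\mathbf{A}') = \Omega_{\Lambda_r(\phi)}(\mathbf{A}) \cap A'$, and combined with $\nu_{\mathbf{A}}(U) = 0$ this yields $\langle\Lambda_r(\phi),\mathbf{A}'\rangle = \langle\Lambda_r(\phi),\mathbf{A}\rangle$ for all $r$, so the limit is still positive in $\mathbf{A}'$. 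The crux of the whole construction, and the only real obstacle, is to excise \emph{entire} connected components and on a measure-zero set: this is exactly what guarantees simultaneously that local satisfaction is preserved on the residue, that Stone pairings of local formulas are unchanged, and that residuality is inherited.
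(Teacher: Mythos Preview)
Your proof is correct and follows essentially the same approach as the paper: define the set $\mathcal{F}$ of bad local formulas, remove the union of components meeting any $\Omega_\phi(\mathbf A)$ for $\phi\in\mathcal F$, and observe this removes a measure-zero set. You are in fact more thorough than the paper, which simply asserts ``it is clear that $\mathbf A'$ is clean'' without arguing why no new bad formulas can appear; your verification via $\phi\notin\mathcal F$ and the equality $\langle\Lambda_r(\phi),\mathbf A'\rangle=\langle\Lambda_r(\phi),\mathbf A\rangle$ is the right way to close that loop.

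One small imprecision: the claim that every $\mathcal L_{\omega_1\omega}$-definable subset of $(A')^p$ is the restriction of the corresponding definable set of $A^p$ is not literally true for non-local formulas (a sentence witnessed only inside $U$ changes truth value). What is true, and sufficient, is that \emph{local} formulas satisfy this restriction property because $U$ is a union of full components; by Gaifman's theorem every first-order definable set in $\mathbf A'$ is then a Boolean combination of such restrictions, hence measurable. Alternatively, note that $U$ is $\mathcal L_{\omega_1\omega}$-definable in $\mathbf A$ and invoke Lemma~\ref{lemma:interpomega}.
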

\begin{proof}
Let $\phi\in{\rm FO}_1^{\rm local}$ be such that $\mathbf A\models (\exists x)\phi(x)$
and $\lim_{r\rightarrow\infty}\langle\Lambda_r(\phi),\mathbf A\rangle=0$.
For $v\in A$, denote by $\mathbf A_v$ the connected component of $\mathbf A$ that contains $v$, that is:
$\mathbf A_v=\bigcup_{r\in\bbbn}B_r(\mathbf A,v)$.
Note that if $A\models\phi(v)$ and if $u\in\mathbf A_v$ then $\mathbf A\models\Lambda_r(\phi)(u)$ but
$\langle\Lambda_r(\Lambda_r(\phi)),\mathbf A\rangle=\langle\Lambda_r(\phi),\mathbf A\rangle=0$.

Note that the assumption on $\phi$ rewrites 
as ``$\Omega_\phi(\mathbf A)\neq\emptyset$ while for every 
$v\in\Omega_\phi(\mathbf A)$ it holds $\nu_{\mathbf A}(\mathbf A_v)=0$''.

Then 
\begin{align*}
\nu_{\mathbf A}\bigl(\bigcup_{v\in\Omega_\phi(\mathbf A)}\mathbf A_v\bigr)&=
\nu_{\mathbf A}\bigl(\bigcup_{v\in\Omega_\phi(\mathbf A)}\bigcup_{r\in\bbbn}B_r(\mathbf A,v)\bigr)
=\nu_{\mathbf A}\bigl(\bigcup_{r\in\bbbn}\bigcup_{v\in\Omega_\phi(\mathbf A)}B_r(\mathbf A,v)\bigr)\\
&=\nu_{\mathbf A}\bigl(\bigcup_{r\in\bbbn}\Omega_{\Lambda_r(\phi)}(\mathbf A)\bigr)
=\sum_{r\in\bbbn}\langle\Lambda_r(\phi),\mathbf A\rangle
=0.
\end{align*}

Denote by $\mathcal F$ the set of all $\phi\in{\rm FO}_1^{\rm local}$ such that
$$\mathbf A\models(\exists x)\phi(x)\quad\text{ and }\quad\lim_{r\rightarrow\infty}\langle\Lambda_r(\phi),\mathbf A\rangle=0,$$
and let $\mathbf A'$ be obtained by removing $\bigcup_{\phi\in\mathcal F}\bigcup_{v\in\Omega_\phi(\mathbf A)}\mathbf A_v$ from $\mathbf A$. As for every $\phi\in\mathcal F$ it holds $\nu_{\mathbf A}\bigl(\bigcup_{v\in\Omega_\phi(\mathbf A)}\mathbf A_v\bigr)=0$ and as $\mathcal F$ is countable, the modeling $\mathbf A'$ differs from $\mathbf A$ by a set of connected components
of global measure $0$. Moreover, it is clear that $\mathbf A'$ is clean.
\end{proof}

Recall that ${\rm FO}$-convergence can be decomposed into elementary convergence and ${\rm FO}^{\rm local}$-convergence:
\begin{lemma}[\cite{CMUC,NPOM1arxiv}]
A sequence $(\mathbf A_n)_{n\in\bbbn}$ is ${\rm FO}$-convergent if and only if it is both elementary convergent and
${\rm FO}^{\rm local}$-convergent.

Consequently, a modeling ${\mathbf L}$ is a modeling ${\rm FO}$-limit of a sequence $(\mathbf A_n)_{n\in\bbbn}$
if and only if it is both an elementary limit and a modeling ${\rm FO}^{\rm local}$-limit of it.
\end{lemma}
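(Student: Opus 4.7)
The plan is to reduce the equivalence to the Gaifman locality theorem in its general form, which asserts that every formula $\phi(x_1,\dots,x_p)\in{\rm FO}_p$ is logically equivalent to a Boolean combination of $r$-local formulas in ${\rm FO}_p^{\rm local}$ together with local sentences. Theorem~\ref{thm:gaifman0} states only the sentence case, but the proof from~\cite{Gaifman1982} yields this stronger statement, which will be the only nontrivial ingredient needed.

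The forward direction will be immediate: elementary convergence and ${\rm FO}^{\rm local}$-convergence are convergence on fragments of ${\rm FO}$. For the backward direction, I would fix $\phi\in{\rm FO}$ and apply the Gaifman decomposition to write $\phi$ as a Boolean combination $B(\psi_1,\dots,\psi_k,\sigma_1,\dots,\sigma_m)$ of local formulas $\psi_j$ and local sentences $\sigma_i$. Elementary convergence supplies an index $N$ such that for every $n\geq N$ each $\sigma_i$ takes a fixed truth value $\epsilon_i\in\{0,1\}$ in $\mathbf A_n$; substituting these constants collapses $B$ into a purely local Boolean combination $B'(\psi_1,\dots,\psi_k)$. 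Expanding $B'$ in disjunctive normal form, for $n\geq N$ the quantity $\langle\phi,\mathbf A_n\rangle$ becomes a fixed linear combination (via inclusion--exclusion) of Stone pairings of the form $\langle\bigwedge_{j\in J}\psi_j\wedge\bigwedge_{j\in J'}\neg\psi_j,\mathbf A_n\rangle$. Each such conjunction is again a local formula (its locality radius being the maximum of the radii of the $\psi_j$'s), so ${\rm FO}^{\rm local}$-convergence ensures that every term converges, and hence so does $\langle\phi,\mathbf A_n\rangle$.

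For the assertion about modeling limits, I would apply the same decomposition to each $\phi\in{\rm FO}$ and compute $\langle\phi,\mathbf L\rangle$ term by term. The contribution of each local sentence $\sigma_i$ reduces to the requirement that $\mathbf L\models\sigma_i$ agree with the eventual truth value of $\sigma_i$ in $\mathbf A_n$ (that is, $\mathbf L$ is an elementary limit), while the contribution of the remaining local factors reduces to $\langle\psi_j,\mathbf L\rangle=\lim_n\langle\psi_j,\mathbf A_n\rangle$ (that is, $\mathbf L$ is a modeling ${\rm FO}^{\rm local}$-limit). Assembling these identities gives $\langle\phi,\mathbf L\rangle=\lim_n\langle\phi,\mathbf A_n\rangle$ for every $\phi\in{\rm FO}$; the converse is trivial.

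The only real obstacle will be the appeal to the formula version of Gaifman locality rather than the sentence version provided here as Theorem~\ref{thm:gaifman0}; the cited sentence version alone is not sufficient, so the proof must either reprove locality for formulas with free variables or quote the stronger classical statement. Beyond this, the argument is routine bookkeeping of truth-value stabilization and of the fact that a finite conjunction of local formulas remains local.
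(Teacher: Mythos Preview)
The paper does not actually prove this lemma; it is stated with a citation to \cite{CMUC,NPOM1arxiv} and no proof is given. Your argument via the full Gaifman locality theorem (for formulas with free variables) is precisely the standard proof found in those references, and it is correct as written; your observation that the sentence-only version stated as Theorem~\ref{thm:gaifman0} is insufficient, and that one must invoke the general Gaifman normal form, is exactly right.
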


The interest of residual clean modelings stands in the following.
\begin{lemma}
\label{lem:mkfull}
Let $(G_n)_{n\in\bbbn}$ be a residual ${\rm FO}$-convergent sequence.

If $\mathbf L$ is a residual clean modeling ${\rm FO}_1^{\rm local}$-limit of $(G_n)_{n\in\bbbn}$ 
and $M$ is a countable elementary limit of $(G_n)_{n\in\bbbn}$ then 
 $\mathbf L\cup M$ is a modeling ${\rm FO}$-limit of $(G_n)_{n\in\bbbn}$.
\end{lemma}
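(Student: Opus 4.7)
The strategy is to apply the decomposition of ${\rm FO}$-convergence into elementary and ${\rm FO}^{\rm local}$-convergence (the lemma just preceding the statement) and to check both pieces for $\mathbf L\cup M$, where by $\mathbf L\cup M$ I mean the disjoint union, with universe $L\sqcup M$, no relations crossing $L$ and $M$, measure equal to $\nu_{\mathbf L}$ extended by zero on the countable set $M$, and $\sigma$-algebra generated by $\Sigma_{\mathbf L}$ and $\mathcal P(M)$. The ${\rm FO}^{\rm local}$-part is easy: any local formula $\phi$ with $p\geq 1$ free variables is insensitive to the $M$-component up to null sets, since tuples having at least one coordinate in $M$ form a null set and on $L^p$ satisfaction of $\phi$ in $\mathbf L\cup M$ coincides with that in $\mathbf L$ by locality (as $L$ is a union of whole connected components of $\mathbf L\cup M$). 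Thus $\langle\phi,\mathbf L\cup M\rangle=\langle\phi,\mathbf L\rangle$, which equals $\lim_n\langle\phi,G_n\rangle$ by combining the ${\rm FO}_1^{\rm local}$-limit hypothesis on $\mathbf L$ with Lemma~\ref{lem:resloc} applied to the residual sequence $(G_n)$.

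For elementary convergence, Gaifman's theorem (Theorem~\ref{thm:gaifman0}) reduces the task to local sentences of the form
$$\sigma=\exists x_1\cdots\exists x_k\bigl(\bigwedge_{1\leq i<j\leq k}{\rm dist}(x_i,x_j)>2r\ \wedge\ \bigwedge_{i=1}^k\psi_i(x_i)\bigr),$$
for which I must show $\mathbf L\cup M\models\sigma$ if and only if eventually $G_n\models\sigma$. One direction is immediate: if eventually $G_n\models\sigma$ then $M\models\sigma$ (by the elementary-limit hypothesis on $M$), and since $M$ sits inside $\mathbf L\cup M$ as a union of connected components, the witnesses in $M$ also witness $\sigma$ in $\mathbf L\cup M$. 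For the converse, fix witnesses in $\mathbf L\cup M$ and partition their indices into $I=\{i:v_i\in L\}$ and $J=\{i:v_i\in M\}$; by locality of the $\psi_i$ and the absence of edges between $L$ and $M$, this yields two local sub-sentences $\sigma_I$ and $\sigma_J$ with $\mathbf L\models\sigma_I$ and $M\models\sigma_J$.

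The crux is to merge these satisfactions into a single configuration of well-separated witnesses inside some $G_n$. The elementary-limit property of $M$ gives, for $n$ large, witnesses $(v_j)_{j\in J}$ for $\sigma_J$ directly in $G_n$. For each $i\in I$, the assumption $\mathbf L\models\exists x\,\psi_i(x)$ combined with cleanness of $\mathbf L$ yields an integer $r'$ (taken uniform in $i$) with $\langle\Lambda_{r'}(\psi_i),\mathbf L\rangle>0$; the ${\rm FO}_1^{\rm local}$-convergence of $\mathbf L$ then gives $\liminf_n\langle\Lambda_{r'}(\psi_i),G_n\rangle>0$. I greedily pick vertices $w_i\in G_n$ (one index $i\in I$ at a time) satisfying $\Lambda_{r'}(\psi_i)$ and at distance $>2r+2r'$ from all previously chosen $w$'s and from all $v_j$'s; this is feasible because residuality of $(G_n)$ forces each forbidden ball to have measure $o(1)$, while the set of $\Lambda_{r'}(\psi_i)$-vertices has density bounded below. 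Replacing each $w_i$ by a $\psi_i$-witness $u_i$ within distance $r'$, the triangle inequality guarantees pairwise distance $>2r$ among the $u_i$'s and with the $v_j$'s, so $(u_i)_{i\in I}\cup (v_j)_{j\in J}$ witnesses $G_n\models\sigma$.

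The main obstacle is this merging step: it forces the simultaneous use of cleanness of $\mathbf L$ (to turn the qualitative existence of $\psi_i$-witnesses in $\mathbf L$ into positive pre-limit density in $G_n$), the residuality of $(G_n)$ (to keep the excluded neighborhoods around the already-chosen witnesses sublinear), and the elementary-limit property of $M$ (to anchor the $M$-part of $\sigma$ inside the same $G_n$), all coordinated in a single finite structure.
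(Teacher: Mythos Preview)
Your proof is correct and follows the paper's overall strategy: split ${\rm FO}$-convergence into the ${\rm FO}^{\rm local}$ and elementary parts, handle the former via Lemma~\ref{lem:resloc} and the zero-measure of $M$, and reduce the latter via Gaifman to local sentences, where cleanness and residuality do the work.

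The one place where your execution genuinely differs from the paper's is the merging step. The paper combines witnesses \emph{inside $\mathbf L$}: from $\langle\Lambda_{r_0}(\psi_i),\mathbf L\rangle>0$ for each $i$ and residuality of the \emph{modeling} $\mathbf L$, it concludes $\langle\Lambda_{r_0}(\phi),\mathbf L\rangle>0$ (random independent vertices are a.s.\ far apart), then transfers this multi-variable local density to $G_n$ --- which implicitly uses that a residual ${\rm FO}_1^{\rm local}$-limit is automatically an ${\rm FO}^{\rm local}$-limit (again Lemma~\ref{lem:resloc}, applied on both sides). You instead transfer only the single-variable densities $\langle\Lambda_{r'}(\psi_i),\cdot\rangle$ (which needs only the stated ${\rm FO}_1^{\rm local}$-limit hypothesis) and do the combining by a greedy selection \emph{inside $G_n$}, using residuality of the \emph{sequence}. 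Your version also treats the mixed case ($I\neq\emptyset\neq J$) explicitly, whereas the paper's reduction ``it suffices to show $\mathbf L\models\sigma\Rightarrow M\models\sigma$'' leaves that case to the reader. Both arguments are valid; yours is a bit more self-contained, the paper's a bit shorter.
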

\begin{proof}
According to Theorem~\ref{thm:gaifman0}, it is sufficient to check that if $\psi_1,\dots,\psi_n$ are
$r$-local formulas with a single free variable and if we let
$$\phi(x_1,\dots,x_n):\quad\bigwedge_{1\leq i<j\leq n}{\rm dist}(x_i,x_j)>2r\ \wedge\ \bigwedge_{i=1}^n\psi_i(x_i)$$
then it holds
$$\mathbf L\cup M\models (\exists x_1,\dots x_n)\phi(x_1,\dots,x_n)\quad\iff\quad
M\models (\exists x_1,\dots x_n)\phi(x_1,\dots,x_n).$$
But (according to the locality assumptions) it is equivalent to check 
 that for every $r$-local $\psi_1,\dots,\psi_n$ and associated $\phi$ it holds
$$\mathbf L\models (\exists x_1,\dots x_n)\phi(x_1,\dots,x_n)\quad\Rightarrow\quad
M\models (\exists x_1,\dots x_n)\phi(x_1,\dots,x_n).$$
But  if
$\mathbf L\models (\exists x_1,\dots x_n)\phi(x_1,\dots,x_n)$, then
forall  $1\leq i\leq n$ it holds $\mathbf L\models (\exists x)\psi_i(x)$ hence, as $\mathbf L$ is clean,
it that there exists $r_0\in\bbbn$ such that $\langle\Lambda_{r_0}(\psi_i),\mathbf L\rangle>0$
for every $1\leq i\leq n$.
As $\mathbf L$ is residual, this implies $\langle\Lambda_{r_0}(\phi),\mathbf L\rangle>0$.
Thus there exits $n_0$ such that for every $n\geq n_0$ it holds $\langle\Lambda_{r_0}(\phi),G_n\rangle>0$.
In particular the elementary limit $M$ of $G_n$ satisfies 
$(\exists x_1,\dots,x_n)\phi(x_1,\dots,x_n)$.
\end{proof}

\begin{corollary}
\label{cor:res}
A residual ${\rm FO}$-convergent sequence $(G_n)_{n\in\bbbn}$ admits a modeling ${\rm FO}$-limit
if and only if it admits a modeling ${\rm FO}_1^{\rm local}$-limit.
\end{corollary}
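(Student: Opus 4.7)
The plan is as follows. The forward direction is immediate, because ${\rm FO}_1^{\rm local}\subseteq{\rm FO}$, so any modeling ${\rm FO}$-limit is automatically a modeling ${\rm FO}_1^{\rm local}$-limit. Only the converse requires work, and the strategy there is to take an arbitrary modeling ${\rm FO}_1^{\rm local}$-limit and successively upgrade it using the preceding lemmas until we can bolt on a countable elementary limit via Lemma~\ref{lem:mkfull}.

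Concretely, suppose $(G_n)_{n\in\bbbn}$ is a residual ${\rm FO}$-convergent sequence admitting a modeling ${\rm FO}_1^{\rm local}$-limit $\mathbf L_0$. First I would normalize $\mathbf L_0$ into a residual clean modeling while preserving its ${\rm FO}_1^{\rm local}$-statistics. By Lemma~\ref{lem:mkres}, there is a residual modeling $\mathbf L_1$ that is ${\rm FO}_1^{\rm local}$-equivalent to $\mathbf L_0$; hence $\mathbf L_1$ is itself a modeling ${\rm FO}_1^{\rm local}$-limit of $(G_n)_{n\in\bbbn}$. By Lemma~\ref{lem:mkclean}, removing from $\mathbf L_1$ a union of connected components of total measure zero yields a residual clean modeling $\mathbf L$, and since the excised set is null every Stone pairing $\langle\phi,\cdot\rangle$ is left unchanged, so $\mathbf L$ is still a modeling ${\rm FO}_1^{\rm local}$-limit of $(G_n)_{n\in\bbbn}$.

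Next, since the sequence $(G_n)_{n\in\bbbn}$ is ${\rm FO}$-convergent it is in particular elementary convergent, so the sentences whose Stone pairing tends to $1$ form a complete consistent first-order theory; by the downward Löwenheim--Skolem theorem this theory has a countable model $M$, which is a countable elementary limit of $(G_n)_{n\in\bbbn}$. I then apply Lemma~\ref{lem:mkfull} to $\mathbf L$ and $M$: the disjoint union $\mathbf L\cup M$ is a modeling ${\rm FO}$-limit of $(G_n)_{n\in\bbbn}$.

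There is no real obstacle in the corollary beyond checking that the three normalization steps above chain together cleanly; the substantive content — that Gaifman locality allows one to recover satisfaction of arbitrary first-order sentences from $1$-local statistics plus an elementary witness, once cleanness and residuality are in force — has already been discharged in Lemma~\ref{lem:mkfull}.
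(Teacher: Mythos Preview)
Your proof is correct and is exactly the argument the paper intends: the corollary is stated immediately after Lemma~\ref{lem:mkfull}, and the paper explicitly remarks (just after the corollary) that the chain Lemma~\ref{lem:mkres} $\to$ Lemma~\ref{lem:mkclean} $\to$ Lemma~\ref{lem:mkfull} is what reduces the full ${\rm FO}$-limit problem to the ${\rm FO}_1^{\rm local}$ case. Your inclusion of the normalization step via Lemma~\ref{lem:mkres} is appropriate, since an arbitrary modeling ${\rm FO}_1^{\rm local}$-limit of a residual sequence need not itself be residual.
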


In this context, the following conjecture is interesting.
\begin{conjecture}
\label{conj:1}
Every ${\rm FO}$-convergent residual sequence admits
a modeling ${\rm FO}$-limit.
\end{conjecture}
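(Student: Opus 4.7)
The plan is to use Corollary~\ref{cor:res} to reduce the conjecture to producing a modeling ${\rm FO}_1^{\rm local}$-limit of any ${\rm FO}^{\rm local}$-convergent residual sequence $(\mathbf A_n)_{n\in\bbbn}$. Once such a limit $\mathbf L_0$ is in hand, Lemmas~\ref{lem:mkres} and~\ref{lem:mkclean} can be applied to make it residual and clean without changing its ${\rm FO}_1^{\rm local}$-statistics, after which Lemma~\ref{lem:mkfull} combines $\mathbf L_0$ with any countable elementary limit $M$ of $(\mathbf A_n)$ (obtained, after extraction of an elementary-convergent subsequence, from compactness of the Stone space of complete theories) to yield a modeling ${\rm FO}$-limit $\mathbf L_0\cup M$. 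Hence the whole conjecture reduces to the ${\rm FO}_1^{\rm local}$ case.

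To build the candidate ${\rm FO}_1^{\rm local}$-limit I would follow a Benjamini--Schramm style construction. For each radius $r$, ${\rm FO}_1^{\rm local}$-convergence provides a limit distribution $\mu^{(r)}$ on (isomorphism classes of) rooted $r$-ball structures. These form a projective system in $r$, whose inverse limit is a Borel probability measure $\mu$ on the Polish space $\mathcal R$ of rooted countable connected $\sig$-structures (equipped with the topology of local convergence). I would then equip $L=\mathcal R\times[0,1]$ with the product measure $\mu\otimes\leb$ and declare a tuple $((S_1,s_1,\alpha_1),\dots,(S_k,s_k,\alpha_k))$ to lie in a relation $R_i$ exactly when all $\alpha_j$ coincide, all rooted structures $(S_j,s_j)$ are pairwise isomorphic, and $R_i$ holds in their common representative on the corresponding images of the distinguished roots. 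The $[0,1]$-factor ensures residuality (as in Lemma~\ref{lem:mkres}), while each $\alpha$-fibre encodes a connected rooted structure drawn according to $\mu$. A direct computation should then yield
$$\langle\phi,\mathbf L\rangle=\mu^{(r)}(\{t:t\models\phi\})=\lim_{n\rightarrow\infty}\langle\phi,\mathbf A_n\rangle$$
for every $r$-local $\phi\in{\rm FO}_1^{\rm local}$, because only the rooted isomorphism type of the fibre is probed.

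The main obstacle, and the reason the statement is posed as a conjecture, is in verifying that $\mathbf L$ is genuinely a relational sample space, i.e.\ that every first-order-definable subset of $L^p$ is Borel measurable. Since elements of $\mathcal R$ are isomorphism classes, quantification over vertices of the fibre structures requires a measurable selection of representatives; achieving this in general likely requires a Borel uniformisation theorem for the orbit equivalence relation of the natural action on pointed structures, and it is not clear that such a uniformisation exists without additional structural hypotheses on~$\mathcal C$. A second, more combinatorial difficulty is that in the unbounded-degree regime the local types need not live in a compact space of rooted structures, so some tightness control (in the spirit of nowhere density) may be needed before the projective limit $\mu$ is well-defined. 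I would expect the full conjecture to require substantially more machinery than sketched here, and a successful proof will probably exploit the hypothesis that $\mathcal C$ is sparse (e.g.\ nowhere dense) rather crucially; the construction above is then the template, with sparsity used precisely to overcome the measurability and tightness issues just described.
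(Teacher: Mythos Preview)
The statement you are attempting to prove is presented in the paper as an open \emph{conjecture}; the paper does not supply a proof, and your own final paragraph correctly senses this. What the paper does establish is precisely the reduction you describe in your first paragraph: via Lemmas~\ref{lem:mkres}, \ref{lem:mkclean}, \ref{lem:mkfull} (equivalently Corollary~\ref{cor:res}), Conjecture~\ref{conj:1} is equivalent to the assertion that every ${\rm FO}_1^{\rm local}$-convergent residual sequence admits a modeling ${\rm FO}_1^{\rm local}$-limit. So your reduction step matches the paper exactly, and the remaining content of your proposal is a heuristic attack on an open problem rather than a comparison target.

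On the substance of that attack, there is a genuine gap beyond the two you flag. In your definition of the relations on $L=\mathcal R\times[0,1]$ you require that the rooted structures $(S_j,s_j)$ be \emph{pairwise isomorphic as rooted structures} and then evaluate $R_i$ on the images of the roots in a common representative. But if $(S_1,s_1)\cong(S_2,s_2)\cong\dots$ as rooted structures, the isomorphisms send every $s_j$ to the \emph{same} vertex, so only diagonal instances $R_i(v,\dots,v)$ can ever hold. Your modeling therefore has no nontrivial edges or higher-arity relations at all: each $\alpha$-fibre is a collection of isolated points indexed by rooted local types, not a copy of the intended connected structure. The Benjamini--Schramm space $\mathcal R$ records the local type \emph{at} a vertex, but to build a genuine relational sample space one must realise, within each fibre, the other vertices of the component and the relations among them---this is exactly where the paper, in the special case of forests (Lemma~\ref{lem:tree1}), works much harder: it passes to the Stone space $S_1$ of ${\rm FO}_1^{\rm local}$-types, introduces auxiliary $[0,1)$ and $\mathbb S^1$ factors, and hand-builds a measurable parent map $F$ together with fibring maps $\xi,\zeta$ so that the resulting structure is an honest rooted forest with the correct type statistics. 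That construction is highly specific to the tree signature and does not generalise in any obvious way, which is presumably why the general statement remains conjectural.
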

Note that, according to Lemmas~\ref{lem:mkres},~\ref{lem:mkclean} and~\ref{lem:mkfull}, Conjecture~\ref{conj:1} is equivalent to the 
(seemingly weaker) conjecture that every ${\rm FO}_1^{\rm local}$-convergent residual sequence admits
a modeling ${\rm FO}_1^{\rm local}$-limit.
\section{Non-dispersive Sequences}
The notion of residual sequences derives from the notion of residual modelings, that is: modelings without
connected components with non-zero measure). Similarly the notion of non-dispersive sequences derives from the notion
of connected modelings (modulo a zero-measure set).

\begin{definition}
A sequence $(\mathbf A_n)_{n\in\bbbn}$ of modelings is {\em non-dispersive} if
$$\forall\epsilon>0\,\exists d\in\bbbn,\quad \liminf_{n\rightarrow\infty}\sup_{v\in A_n} \nu_{\mathbf A_n}(B_d(\mathbf A_n,v))>1-\epsilon.$$

In the case of rooted structures, we usually want a stronger statement that the structures remain concentrated around their roots:
a sequence $(\mathbf A_n,\rho_n)_{n\in\bbbn}$ of rooted modelings is {\em $\rho$-non-dispersive} if
$$\forall\epsilon>0\,\exists d\in\bbbn,\quad \liminf_{n\rightarrow\infty}\nu_{\mathbf A_n}(B_d(\mathbf A_n,\rho_n))>1-\epsilon.$$
\end{definition}
Note that every $\rho$-non-dispersive sequence is obviously non-dispersive.

\begin{remark}
Let $(\mathbf A_n)_{n\in\bbbn}$ be a non-dispersive ${\rm FO}^{\rm local}$-convergent sequence.
If $(\mathbf A_n)_{n\in\bbbn}$ has a modeling ${\rm FO}^{\rm local}$-limit $\mathbf L$, then
$\mathbf L$ has a full measure connected component, which is also a modeling ${\rm FO}^{\rm local}$-limit
of $(\mathbf A_n)_{n\in\bbbn}$.
\end{remark}

\begin{lemma}
\label{lem:equivr}
Let $(\mathbf A_n,\rho_n)_{n\in\bbbn}$ be a $\rho$-non-dispersive ${\rm FO}_1$-convergent sequence, with
modeling ${\rm FO}_1^{\rm local}$-limit $(\mathbf L,\rho)$ and a countable elementary limit $(M,\varrho)$.

Let $M_\bullet$ and $\mathbf L_\bullet$ be the connected component of the root in $M$ and $\mathbf L$, respectively.
Then $M_\bullet$ and $\mathbf L_\bullet$ are elementarily equivalent.
\end{lemma}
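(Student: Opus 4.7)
The plan is to use Gaifman's locality theorem to reduce to local sentences, and then to encode the satisfaction of such a sentence in the connected component of the root by a single ${\rm FO}_1^{\rm local}$ formula evaluated at the root, which lets us chain the elementary and modeling limit convergences.

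By Theorem~\ref{thm:gaifman0}, it suffices to show, for every local sentence of the form
$\theta = \exists x_1\ldots x_k \bigl(\bigwedge_{i<j} {\rm dist}(x_i,x_j) > 2r \wedge \bigwedge_i \psi_i(x_i)\bigr)$
with each $\psi_i$ being $r$-local, that $M_\bullet \models \theta$ iff $\mathbf L_\bullet \models \theta$. For any rooted structure $(N,\alpha)$ with $N_\bullet$ the connected component of $\alpha$, any tuple of witnesses for $\theta$ in $N_\bullet$ lies at some finite distance $D$ from $\alpha$ in $N$; combined with the $r$-locality of the $\psi_i$ (which makes satisfaction of $\psi_i(v)$ in $N_\bullet$ equivalent to satisfaction in $N$ whenever $v\in N_\bullet$), this gives
$$N_\bullet \models \theta \quad\iff\quad \exists D \in \bbbn,\ N \models \Phi_r^D(\alpha),$$
where $\Phi_r^D(x_1) := \exists y_1\ldots y_k \bigl(\bigwedge_i {\rm dist}(x_1, y_i) \le D \wedge \bigwedge_{i<j} {\rm dist}(y_i, y_j) > 2r \wedge \bigwedge_i \psi_i(y_i)\bigr)$ is an $(r+D)$-local formula in ${\rm FO}_1^{\rm local}$.

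I would then chain the two convergence hypotheses. Elementary convergence of $(\mathbf A_n, \rho_n) \to (M, \varrho)$ yields $M \models \Phi_r^D(\varrho)$ iff $\mathbf A_n \models \Phi_r^D(\rho_n)$ for all sufficiently large $n$. Applying the modeling ${\rm FO}_1^{\rm local}$-limit property of $(\mathbf L,\rho)$ to the ${\rm FO}_1^{\rm local}$ formula $\Phi_r^D$ gives $\mathbf L \models \Phi_r^D(\rho)$ iff $\mathbf A_n \models \Phi_r^D(\rho_n)$ for all sufficiently large $n$. Chaining these two equivalences and existentially quantifying over $D$ produces $M_\bullet \models \theta$ iff $\mathbf L_\bullet \models \theta$.

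The main obstacle is establishing the second equivalence above: the modeling limit property is naturally phrased in terms of Stone pairings (which are measures), rather than in terms of pointwise truth at the distinguished root $\rho$. This is precisely where the $\rho$-non-dispersive hypothesis enters: since almost all the mass of $\mathbf A_n$ is concentrated in a ball around $\rho_n$ (and likewise for $\mathbf L$ near $\rho$), the truth of an ${\rm FO}_1^{\rm local}$ formula at the root is detectable from Stone pairings up to vanishing error, so the rooted ${\rm FO}_1$-convergence rigorously transfers truth at the root of $\mathbf A_n$ to truth at the root of $\mathbf L$.
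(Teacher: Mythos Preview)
Your approach is essentially the paper's: both reduce via Gaifman to local sentences and then bridge $M$ and $\mathbf L$ by converting truth of a local formula at the root into a Stone-pairing lower bound for the corresponding one-free-variable formula (your $\Phi_r^D(x_1)$ is exactly the paper's auxiliary $\zeta$), using $\rho$-non-dispersiveness on the $\mathbf A_n$ side and the full measure of $\mathbf L_\bullet$ on the $\mathbf L$ side. One small caveat: your biconditional $\mathbf L\models\Phi_r^D(\rho)\Leftrightarrow\mathbf A_n\models\Phi_r^D(\rho_n)$ for large $n$ does not hold with the \emph{same} $D$ on both sides (the detour through Stone pairings of $\Phi_r^{D'}(x_1)$ and back to the root may enlarge the radius), but since you existentially quantify over $D$ at the end this is harmless --- the paper simply treats the two directions separately rather than phrasing it as a single chain of equivalences.
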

\begin{proof}
As $(\mathbf A_n,\rho_n)_{n\in\bbbn}$ is $\rho$-non-dispersive, $\mathbf L_\bullet$ has 
full measure.
According to Theorem~\ref{thm:gaifman0}, it is sufficient to check that if $\psi_1,\dots,\psi_n$ are
$r$-local formulas with a single free variable and if we let
$$\phi(x_1,\dots,x_n):\quad\bigwedge_{1\leq i<j\leq n}{\rm dist}(x_i,x_j)>2r\ \wedge\ \bigwedge_{i=1}^n\psi_i(x_i)$$
then it holds
$$M_\bullet\models\exists x_1\dots\exists x_n\ \phi(x_1,\dots,x_n)\quad\iff\quad
\mathbf L_\bullet\models\exists x_1\dots\exists x_n\ \phi(x_1,\dots,x_n).$$

Assume $\mathbf L_\bullet\models\exists x_1\dots\exists x_n\ \phi(x_1,\dots,x_n)$.
Let $v_1,\dots,v_n\in L_\bullet$ be such that $\mathbf L_\bullet\models\phi(v_1,\dots,v_n)$.
As $\mathbf L_\bullet$ is a full measure connected component of $\mathbf L$, there exists 
$d\geq\max_{1\leq i\leq n}{\rm dist}(v_i,\rho)$ such that $\nu_{\mathbf L}(B_d(\mathbf L,\rho))\geq 1/2$.
Let $\zeta\in{\rm FO}_1^{\rm local}$ be the $2d$-local formula
$$
\exists y_1\dots\exists y_n\ \bigl(\bigwedge_{i=1}^n{\rm dist}(x_1,y_i)\leq 2d\ \wedge\phi(y_1,\dots,y_n)\big).
$$
Then obviously $\Omega_\zeta(\mathbf L)\supseteq B_d(\mathbf L,\rho)$ hence
$\langle\zeta,\mathbf L\rangle\geq 1/2$. As $\mathbf L$ is a modeling ${\rm FO}_1^{\rm local}$-limit
of $(\mathbf A_n,\rho_n)_{n\in\bbbn}$, there exists $n_0$ such that for every $n\geq n_0$ it holds
$\langle\zeta,\mathbf A_n\rangle\geq 1/4$. In particular, for every $n\geq n_0$ it holds 
$\mathbf A_n\models\exists x_1\dots\exists x_n\ \phi(x_1,\dots,x_n)$ hence
$\mathbf M_\bullet\models\exists x_1\dots\exists x_n\ \phi(x_1,\dots,x_n)$.

Conversely, assume that $\mathbf M_\bullet\models\exists x_1\dots\exists x_n\ \phi(x_1,\dots,x_n)$.
Let $v_1,\dots,v_n\in M_\bullet$ be such that $\mathbf M_\bullet\models\phi(v_1,\dots,v_n)$, and let
$d=\max_{1\leq i\leq n}{\rm dist}(v_i,\varrho)$. As $(M_\bullet,\rho_n)$ is an elementary limit of 
$(\mathbf A_n,\rho_n)_{n\in\bbbn}$, there exists $n_0$ such that for every $n\geq n_0$ it holds
$$
\mathbf A_n\models\exists x_1\dots\exists x_n\ \bigl(\bigwedge_{i=1}^n{\rm dist}(x_i,\rho_n)\leq d\ \wedge\ 
\psi(x_1,\dots,x_n)\bigr).
$$
As $(\mathbf A_n,\rho_n)_{n\in\bbbn}$ is $\rho$-non-dispersive, there exists $D\geq d$ and $n_1\geq n_0$ such that
for every $n\geq n_0$, it holds $|B_D(\mathbf A_n,\rho_n)|\geq |\mathbf A_n|/2$.
Let $\zeta\in{\rm FO}_1^{\rm local}$ be the $2D$-local formula
$$
\exists y_1\dots\exists y_n\ \bigl(\bigwedge_{i=1}^n{\rm dist}(x_1,y_i)\leq 2D\ \wedge\phi(y_1,\dots,y_n)\big).
$$
Then obviously $\Omega_\zeta(\mathbf A_n)\supseteq B_D(\mathbf A_n,\rho_n)$ hence
$\langle\zeta,\mathbf A_n\rangle\geq 1/2$. As $\mathbf L$ is a modeling ${\rm FO}_1^{\rm local}$-limit
of $(\mathbf A_n,\rho_n)_{n\in\bbbn}$, it holds $\langle\zeta,\mathbf L\rangle\geq 1/2$ hence,
as $\mathbf L_\bullet$ is full dimensional, all of $\Omega_\zeta(\mathbf L)$ but a subset 
with $\nu_{\mathbf L}$-measure zero is included in $L_\bullet^n$. Hence
$\mathbf L_\bullet\models\exists x_1\dots\exists x_n\ \phi(x_1,\dots,x_n)$.
\end{proof}
\begin{lemma}
\label{lem:mkfullc}
Let $p\geq 1$ and let $(\mathbf A_n,\rho_n)_{n\in\bbbn}$ be a $\rho$-non-dispersive ${\rm FO}_p$-convergent sequence, with
modeling ${\rm FO}_p^{\rm local}$-limit $(\mathbf L,\rho)$ and countable elementary limit $(M,\varrho)$.

Let $M_\bullet$ and $\mathbf L_\bullet$ be the connected components of the root in $M$ and $\mathbf L$, respectively.
Then $\mathbf L_\bullet\cup(M\setminus M_\bullet)$ is a modeling ${\rm FO}_p$-limit of $(\mathbf A_n,\rho_n)_{n\in\bbbn}$.
\end{lemma}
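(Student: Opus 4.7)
The plan is to adapt the proof of Lemma~\ref{lem:mkfull} to the non-dispersive setting, reducing via Gaifman's normal form to two separate checks: a measure-theoretic control of ``local'' content and a purely logical control of the ``sentential'' content. Write $\mathbf B := \mathbf L_\bullet \cup (M\setminus M_\bullet)$, equipped with the measure $\nu_{\mathbf L}$ extended by zero off $L_\bullet$. A preliminary observation is that $\nu_{\mathbf L}(L_\bullet)=1$: the formula ${\rm dist}(x_1,\rho)\leq d$ lies in ${\rm FO}_1^{\rm local}$, ${\rm FO}_p$-convergence implies ${\rm FO}_1^{\rm local}$-convergence, and thus the $\rho$-non-dispersiveness of $(\mathbf A_n,\rho_n)_{n\in\bbbn}$ transfers to $\mathbf L$ through the ${\rm FO}_1^{\rm local}$-limit property.

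First I would invoke Gaifman's theorem (in the form for formulas with free variables): every $\phi\in{\rm FO}_p$ is equivalent to a Boolean combination of $r$-local formulas $\psi(x_1,\dots,x_p)$ and local sentences $\sigma$, for some radius $r$ depending on $\phi$. After grouping by truth assignments of the sentences, it suffices to prove, for each atomic piece of the form $\sigma\wedge\psi$, that $\langle\sigma\wedge\psi,\mathbf B\rangle=\lim_{n}\langle\sigma\wedge\psi,\mathbf A_n\rangle$. Since $\sigma$ has no free variables, both sides factor as the truth value of $\sigma$ times the Stone pairing of $\psi$. The local-formula factor is handled as follows: because $\psi$ is $r$-local and $\mathbf L_\bullet$ is a union of connected components of $\mathbf L$, the $r$-neighborhood in $\mathbf B$ of any tuple in $L_\bullet^p$ coincides with its $r$-neighborhood in $\mathbf L$; combined with $\nu_{\mathbf B}(L_\bullet)=1$ this yields $\langle\psi,\mathbf B\rangle=\langle\psi,\mathbf L\rangle=\lim_n\langle\psi,\mathbf A_n\rangle$ from the assumed ${\rm FO}_p^{\rm local}$-convergence of $(\mathbf A_n,\rho_n)$ to $(\mathbf L,\rho)$.

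For the sentence factor, I need $\mathbf B\models\sigma\iff M\models\sigma$, after which elementary convergence to $M$ closes the argument because $[\mathbf A_n\models\sigma]$ is eventually constant equal to $[M\models\sigma]$. By Lemma~\ref{lem:equivr}, $\mathbf L_\bullet\equiv M_\bullet$, so the task reduces to the classical preservation of elementary equivalence under disjoint union of a common addend in a relational signature: $X_1\equiv X_2$ implies $X_1\sqcup Y\equiv X_2\sqcup Y$ for any $Y$. This follows directly from Theorem~\ref{thm:gaifman0}: a local sentence demanding $k$ pairwise far witnesses can have its witnesses split arbitrarily between the summands of a disjoint union (elements of distinct summands are at infinite distance), so its truth in $X\sqcup Y$ is determined by which ``sub-local-sentences'' (obtained by picking a subset of witness indices) hold in $X$ and in $Y$ separately. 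Applied with $Y=M\setminus M_\bullet$, this gives $\mathbf B\equiv M_\bullet\sqcup(M\setminus M_\bullet)=M$.

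The main obstacle is this last bookkeeping step: spelling out carefully how witnesses split between summands when $\mathbf L_\bullet$ and $M\setminus M_\bullet$ themselves carry many internal connected components, and indexing the resulting sub-local-sentences so that equality of their truth values on $\mathbf L_\bullet$ and on $M_\bullet$ follows formally from Lemma~\ref{lem:equivr}. Once this is in place, everything else---Gaifman decomposition, control of the local factor on the full-measure component, and elementary convergence---is routine assembly.
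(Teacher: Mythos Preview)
Your proposal is correct and follows essentially the same approach as the paper: both use Lemma~\ref{lem:equivr} to get $\mathbf L_\bullet\equiv M_\bullet$, deduce $\mathbf B\equiv M$ via preservation of elementary equivalence under disjoint union with a common summand, and combine this with the fact that $\mathbf B$ differs from $\mathbf L$ only by zero-measure connected components to handle the local part. The paper compresses the argument by invoking its earlier lemma that ${\rm FO}_p$-convergence decomposes into elementary convergence plus ${\rm FO}_p^{\rm local}$-convergence, whereas you unpack this directly via Gaifman; the ``obstacle'' you flag (splitting witnesses across summands) is exactly the step the paper leaves implicit when asserting $\mathbf L_\bullet\cup(M\setminus M_\bullet)\equiv M$.
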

\begin{proof}
According to Lemma~\ref{lem:equivr}, $\mathbf L_\bullet$ and $M_\bullet$ are elementarily equivalent, so
$\mathbf L_\bullet\cup(M\setminus M_\bullet)$ and $M$ are elementarily equivalent.
 It follows that $\mathbf L_\bullet\cup(M\setminus M_\bullet)$ is both
an elementary limit of $(\mathbf A_n,\rho_n)_{n\in\bbbn}$ and
a modeling ${\rm FO}_p^{\rm local}$-limit of $(\mathbf A_n,\rho_n)_{n\in\bbbn}$ (as it differs from $\mathbf L$ by 
a set of connected components with global measure zero) hence a 
modeling ${\rm FO}_p$-limit of $(\mathbf A_n,\rho_n)_{n\in\bbbn}$.
\end{proof}

\begin{problem}
Let $(\mathbf A_n)_{n\in\bbbn}$ be a non-dispersive ${\rm FO}_1^{\rm local}$-convergent sequence with modeling
${\rm FO}_1^{\rm local}$-limit $\mathbf L$.
Does there exist $\rho_n\in A_n$ and $\rho\in L$ such that $(\mathbf A_n,\rho_n)_{n\in\bbbn}$ is a 
$\rho$-non-dispersive ${\rm FO}_1^{\rm local}$-convergent sequence with modeling ${\rm FO}_1^{\rm local}$-limit $(\mathbf L,\rho)$?
\end{problem}

Sometimes, $\rho$-non-dispersive sequences may be still quite difficult to handle, and sequences with bounded
diameter may be more tractable. It is thus natural to consider to what extent it could be possible to further
reduce to the bounded diameter case. We give here a partial answer.

\begin{lemma}
\label{lem:boundedh}
Let $(\mathbf A_n,\rho_n)$ be a $\rho$-non-dispersive ${\rm FO}_p^{\rm local}$-convergent sequence, and
let $(\mathbf L,\rho)$ be a connected modeling.

Assume that for each $d\in\bbbn$, it holds that $(B_d(\mathbf L,\rho),\rho)$ is a modeling ${\rm FO}_p^{\rm local}$-limit
of $(B_d(\mathbf A_n,\rho_n),\rho_n)$. Then $(\mathbf L,\rho)$ is a modeling ${\rm FO}_p^{\rm local}$-limit
of $(\mathbf A_n,\rho_n)$.
\end{lemma}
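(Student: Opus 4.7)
The plan is to approximate $\phi$ by a formula whose satisfaction depends only on a bounded ball around the root, where the hypothesis applies directly. Fix $\phi\in{\rm FO}_p^{\rm local}$, say $r$-local, and $\epsilon>0$. First I would choose a radius $d$ large enough that simultaneously $\nu_{\mathbf L}(B_d(\mathbf L,\rho))>1-\epsilon$ and $\nu_{\mathbf A_n}(B_d(\mathbf A_n,\rho_n))>1-\epsilon$ for all sufficiently large $n$. The second inequality is exactly the definition of $\rho$-non-dispersiveness. The first is where connectedness of $\mathbf L$ enters: since $\mathbf L$ is connected, every vertex sits at finite distance from $\rho$, so $L=\bigcup_{d\in\bbbn}B_d(\mathbf L,\rho)$, and continuity of measure along this increasing sequence of ($\mathcal L_{\omega_1\omega}$-definable, hence measurable) sets gives $\nu_{\mathbf L}(B_d(\mathbf L,\rho))\to 1$.

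Next, let $\psi(x_1,\dots,x_p)$ be the formula $\bigwedge_{i=1}^p{\rm dist}(x_i,\rho)\leq d$. Lemma~\ref{lem:indep} gives $\langle\psi,\mathbf A\rangle=\nu_{\mathbf A}(B_d(\mathbf A,\rho))^p$ for any rooted modeling $(\mathbf A,\rho)$, and Lemma~\ref{lem:1} yields
$$|\langle\phi,\mathbf A\rangle-\langle\phi\wedge\psi,\mathbf A\rangle|\leq 1-\nu_{\mathbf A}(B_d(\mathbf A,\rho))^p\leq p\epsilon$$
for $\mathbf A=\mathbf A_n$ (large $n$) and for $\mathbf A=\mathbf L$. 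This reduces the problem to comparing $\langle\phi\wedge\psi,\mathbf A_n\rangle$ with $\langle\phi\wedge\psi,\mathbf L\rangle$.

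Set $d'=d+r$. For every tuple $(v_1,\dots,v_p)\in B_d(\mathbf A,\rho)^p$, the triangle inequality puts the $r$-neighborhood of the tuple inside $B_{d'}(\mathbf A,\rho)$, so by $r$-locality satisfaction of $\phi$ in $\mathbf A$ and in $B_{d'}(\mathbf A,\rho)$ agree; the formula ${\rm dist}(x_i,\rho)\leq d$ is also evaluated identically in $\mathbf A$ and in $B_{d'}(\mathbf A,\rho)$. Because the probability measure on $B_{d'}(\mathbf A,\rho)$ is the restriction of $\nu_{\mathbf A}$ renormalized by $\nu_{\mathbf A}(B_{d'}(\mathbf A,\rho))$, this translates into
$$\langle\phi\wedge\psi,\mathbf A\rangle=\nu_{\mathbf A}(B_{d'}(\mathbf A,\rho))^p\,\langle\phi\wedge\psi,B_{d'}(\mathbf A,\rho)\rangle,$$
valid for $\mathbf A=\mathbf A_n$ and for $\mathbf A=\mathbf L$.

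Finally, the hypothesis applied at radius $d'$ gives $\langle\phi\wedge\psi,B_{d'}(\mathbf A_n,\rho_n)\rangle\to\langle\phi\wedge\psi,B_{d'}(\mathbf L,\rho)\rangle$. Both $\nu_{\mathbf A_n}(B_{d'}(\mathbf A_n,\rho_n))$ (for large $n$) and $\nu_{\mathbf L}(B_{d'}(\mathbf L,\rho))$ lie in $(1-\epsilon,1]$, so the elementary inequality $|a^p x-b^p y|\leq p|a-b|+|x-y|$ for $a,b,x,y\in[0,1]$ bounds $|\langle\phi\wedge\psi,\mathbf A_n\rangle-\langle\phi\wedge\psi,\mathbf L\rangle|$ by $2p\epsilon+o(1)$. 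Combining the three steps, $\limsup_n|\langle\phi,\mathbf A_n\rangle-\langle\phi,\mathbf L\rangle|=O(p\epsilon)$, and arbitrariness of $\epsilon$ concludes. The main subtlety is the normalization factor $\nu_\cdot(B_{d'}(\cdot,\cdot))^p$ incurred when passing between the ambient modeling and its $d'$-ball; this factor is controlled on the $\mathbf A_n$ side by non-dispersiveness and on the $\mathbf L$ side by connectedness, which is precisely why both hypotheses appear.
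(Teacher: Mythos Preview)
Your proof is correct and follows essentially the same route as the paper's: restrict to tuples lying well inside a large ball around the root using a conjunction of distance constraints, use $r$-locality to transfer the Stone pairing from the ambient structure to that ball, apply the hypothesis at the relevant radius, and control all error terms via $\rho$-non-dispersiveness and connectedness of~$\mathbf L$. The only cosmetic difference is that the paper restricts to $B_{d-r}$ inside $B_d$ while you restrict to $B_d$ inside $B_{d+r}$; in fact your version is a bit more careful than the paper's about the normalization factor $\nu(B_{d'})^p$ incurred when passing between the full structure and the ball.
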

\begin{proof}
Let $\phi\in{\rm FO}_p^{\rm local}$ be $r$-local and let $\epsilon>0$. As $(\mathbf A_n,\rho_n)_{n\in\bbbn}$ is
$\rho$-non-dispersive and as $\mathbf L$ is connected, 
there exist $d,n_0\in\bbbn$ such that $\nu_{\mathbf L}(B_d(\mathbf L,\rho))>1-\epsilon$ and such that
for every $n\geq n_0$ it holds
$|B_d(\mathbf A_n,\rho_n)|>(1-\epsilon) |A_n|$.
Let $\theta$ be the formula ${\rm dist}(x_1,\rho)\leq d-r$, and let $\theta^{(p)}$ be the formula
$\bigwedge_{i=1}^p\theta_d(x_i)$. 
Note that 
$$\langle \theta,B_d(\mathbf A_n,\rho_n)\rangle=\frac{|B_{d-r}(\mathbf A_n,\rho_n)|}{|B_{d}(\mathbf A_n,\rho_n)|}\geq \frac{|B_{d-r}(\mathbf A_n,\rho_n)|}{|A_n|}=\langle \theta,\mathbf A_n\rangle.$$
According to Lemma~\ref{lem:1} it holds
$$|\langle\phi,\mathbf A_n\rangle-\langle\phi\wedge\theta^{(p)},\mathbf A_n \rangle|\leq 1-\langle\theta,\mathbf A_n\rangle^p< p\epsilon.$$
and also
$$|\langle\phi,B_d(\mathbf A_n,\rho_n)\rangle-\langle\phi\wedge\theta^{(p)},B_d(\mathbf A_n,\rho_n) \rangle|\leq 1-\langle\theta,B_d(\mathbf A_n,\rho_n)\rangle^p< p\epsilon.$$
According to the $r$-locality of $\phi$, it holds $\langle\phi\wedge\theta^{(p)},\mathbf A_n \rangle=\langle\phi\wedge\theta^{(p)},B_d(\mathbf A_n,\rho_n) \rangle$, hence
$$|\langle\phi,\mathbf A_n\rangle-\langle\phi,B_d(\mathbf A_n,\rho_n)\rangle|< 2p\epsilon.
$$
Similarly, it holds
$$|\langle\phi,\mathbf L\rangle-\langle\phi,B_d(\mathbf L,\rho)\rangle|< 2p\epsilon.
$$
By assumption, $B_d(\mathbf L,\rho)$ is a modeling ${\rm FO}_p^{\rm local}$-limit of $(B_d(\mathbf A_n,\rho_n))_{n\in\bbbn}$. Thus there exists $n_1\geq n_0$ such that 
$|\langle\phi,B_d(\mathbf L,\rho)\rangle-\langle\phi,B_d(\mathbf A_n,\rho_n)\rangle|< p\epsilon$, hence
for every $n\geq n_1$ it holds
$$
|\langle\phi,\mathbf L\rangle-\langle\phi,\mathbf A_n\rangle|< 5p\epsilon.
$$
Considering $\epsilon\rightarrow 0$, we deduce
$$\langle\phi,\mathbf L\rangle=\lim_{n\rightarrow\infty}\langle\phi,\mathbf A_n\rangle.$$
\end{proof}
\section{Breaking}
The aim of this section is to prove that the study of ${\rm FO}$-convergent sequences of structures in 
a hereditary class naturally reduces to the study of residual sequences and $\rho$-non-dispersive sequences
in that class.

Advancing the main result of this section, Theorem~\ref{thm:break}, we state four technical lemmas.

\begin{lemma}
\label{lem:b1}
Let $0<\epsilon\leq 1$ and let $r\in\bbbn$.

Then, for every graph $G$ there exists a subset $A$ of vertices such that
\begin{enumerate}
	\item\label{enum:l11} for every $a\in A$, it holds $|B_{2r}(G,a)|\geq\epsilon |G|$;
	\item\label{enum:l12} for every $v\notin\bigcup_{a\in A}B_{2r}(G,a)$, it holds $|B_{r}(G,v)|<\epsilon |G|$;
	\item\label{enum:l13} $|A|\leq 1/\epsilon$.
\end{enumerate}
\end{lemma}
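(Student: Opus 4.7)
The plan is to build $A$ greedily. Start with $A_0 = \emptyset$ and, at step $i$, search for a vertex $v \notin \bigcup_{a \in A_i} B_{2r}(G,a)$ satisfying $|B_r(G,v)| \geq \epsilon |G|$; if such a $v$ exists add it to form $A_{i+1} = A_i \cup \{v\}$, otherwise halt. Since $G$ is finite the procedure terminates, and we let $A$ be the final set.

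Property (\ref{enum:l11}) is then immediate: each $a \in A$ was selected because $|B_r(G,a)| \geq \epsilon|G|$, and obviously $B_r(G,a) \subseteq B_{2r}(G,a)$. Property (\ref{enum:l12}) follows from the halting condition, since if $v \notin \bigcup_{a \in A} B_{2r}(G,a)$ were to have $|B_r(G,v)| \geq \epsilon|G|$ the procedure would not have stopped.

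For property (\ref{enum:l13}) the key observation is that the balls $\{B_r(G,a)\}_{a \in A}$ are pairwise disjoint. Indeed, suppose $a$ was added before $a'$ in the greedy process. At the moment $a'$ was added it did not lie in $B_{2r}(G,a)$, so ${\rm dist}(a,a') > 2r$, and by the triangle inequality $B_r(G,a) \cap B_r(G,a') = \emptyset$. Summing over $A$ gives $|A|\cdot\epsilon|G| \leq \sum_{a\in A}|B_r(G,a)| \leq |G|$, i.e.\ $|A| \leq 1/\epsilon$.

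There is no real obstacle here: this is a standard greedy covering/packing argument, and the only point requiring a moment of care is the use of the $2r$ versus $r$ distinction, which is precisely what makes the disjointness of the radius-$r$ balls work while the radius-$2r$ balls cover the thick part.
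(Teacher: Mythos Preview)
Your proof is correct and essentially identical to the paper's. The paper phrases it as choosing a \emph{maximal} set $A$ of vertices with $|B_r(G,a)|\ge\epsilon|G|$ and pairwise disjoint $r$-balls, then deduces $|A|\le 1/\epsilon$ from disjointness and deduces property~(\ref{enum:l12}) from maximality via the triangle inequality; your greedy construction produces exactly such a maximal set, since the selection condition $v\notin\bigcup_{a\in A_i}B_{2r}(G,a)$ is equivalent to $B_r(G,v)$ being disjoint from all earlier $B_r(G,a)$.
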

\begin{proof}
Let $A$ be a maximal subset of vertices of $G$ such that 
\begin{itemize}
	\item for every $a\in A$, it holds $|B_{r}(G,a)|\geq\epsilon |G|$ (Hence \eqref{enum:l11} holds);
	\item for every distinct $x,y\in A$, it holds $B_{r}(G,x)\cap B_{r}(G,y)=\emptyset$.
\end{itemize}
Obviously, $|A|\leq 1/\epsilon$, hence \eqref{enum:l13} holds. 
Moreover, by maximality of $X$, if $v$ is any vertex such that $|B_{r}(G,v)|\geq\epsilon |G|$ then 
there exists $a\in A$ such that $B_{r_0}(G,a)\cap B_{r_0}(G,v)\neq\emptyset$, thus
$v\in B_{2r}(G,a)$. Hence \eqref{enum:l12} holds.
\end{proof}

\begin{lemma}
\label{lem:b2}
Let $\epsilon>0$, let $r\in\bbbn$ and let $(G_n)_{n\in\bbbn}$ be an ${\rm FO}^{\rm local}$-convergent sequence.
Then there exists integer $q\leq 1/\epsilon$, integer $D$, and increasing function $N:\bbbn\rightarrow\bbbn$ and an  ${\rm FO}^{\rm local}$-convergent sequence $(G_n^+)_{n\in\bbbn}$ of $q$-rooted graphs, such that $G_n^+$ is a $q$-rooting of $G_{N(n)}$ (with roots $c_1^n,\dots,c_q^n$) and
\begin{itemize}
	\item $\lim_{n\rightarrow\infty} {\rm dist}_{G_n^+}(c_i^n,c_j^n)=\infty$ for every $1\leq i<j\leq q$;
	\item $B_D(G_n^+,c_i^n)\cap B_D(G_n^+,c_j^n)=\emptyset$ for every $1\leq i<j\leq q$ and every $n\in\bbbn$;
	\item $|B_D(G_n^+,c_i^n)|\geq\epsilon |G_n^+|$ for every $1\leq i\leq q$ and every $n\in\bbbn$;
	\item $|B_r(G_n^+,v)|<\epsilon |G_n^+|$ for every $v\notin\bigcup_{i=1}^q B_D(G_n^+,c_i^n)$ and every $n\in\bbbn$.
\end{itemize}
\end{lemma}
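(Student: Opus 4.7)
The plan is to apply Lemma~\ref{lem:b1} to each $G_n$ to extract a bounded number of heavy centers, then perform several successive subsequence extractions to control their number, the asymptotics of their pairwise distances, and the ${\rm FO}^{\rm local}$-convergence of the resulting rooted sequence. First, invoke Lemma~\ref{lem:b1} with parameters $\epsilon$ and $r$ to obtain for every $n$ a set $A_n\subseteq V(G_n)$ with $|A_n|\leq 1/\epsilon$, $|B_{2r}(G_n,a)|\geq\epsilon|G_n|$ for each $a\in A_n$, and $|B_r(G_n,v)|<\epsilon|G_n|$ for each $v\notin\bigcup_{a\in A_n}B_{2r}(G_n,a)$. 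By the pigeonhole principle, pass to a subsequence along which $|A_n|=Q$ is constant (with $Q\leq 1/\epsilon$), and fix enumerations $A_n=\{a_1^n,\dots,a_Q^n\}$.

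Next, for each of the $\binom{Q}{2}$ pairs $(i,j)$ the integer sequence ${\rm dist}_{G_n}(a_i^n,a_j^n)$ either contains a bounded subsequence or tends to infinity, so by iterated extraction we may assume that for each pair either the distance tends to infinity or it converges to a finite integer $d_{ij}$ (and is therefore eventually equal to $d_{ij}$). Call a pair \emph{near} if $d_{ij}$ is finite; the triangle inequality makes nearness an equivalence relation on $\{1,\dots,Q\}$. Select one representative $c_1^n,\dots,c_q^n$ from each class, so that distances between distinct representatives tend to infinity, and set $D=2r+\max\{d_{ij}:i,j\text{ near}\}$ (with the convention $D=2r$ if no pair is near). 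Then for every $a_k^n$ the representative $c_{i(k)}^n$ of its class eventually satisfies ${\rm dist}(a_k^n,c_{i(k)}^n)+2r\leq D$, hence $B_{2r}(G_n,a_k^n)\subseteq B_D(G_n,c_{i(k)}^n)$.

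Extract one more subsequence so that simultaneously ${\rm dist}(c_i^n,c_j^n)>2D$ holds for all distinct $i,j$ and the inclusions $B_{2r}(G_n,a_k^n)\subseteq B_D(G_n,c_{i(k)}^n)$ hold for every $k$ and every $n$. The first condition forces the balls $B_D(G_n,c_i^n)$ to be pairwise disjoint; the second ensures that every $v\notin\bigcup_i B_D(G_n,c_i^n)$ lies outside $\bigcup_{a\in A_n}B_{2r}(G_n,a)$, so $|B_r(G_n,v)|<\epsilon|G_n|$ by Lemma~\ref{lem:b1}. The bound $|B_D(G_n,c_i^n)|\geq|B_{2r}(G_n,c_i^n)|\geq\epsilon|G_n|$ and the divergence of inter-representative distances are immediate from the construction. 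Finally, enumerate the (countable) set of formulas in ${\rm FO}^{\rm local}$ over the signature enriched by $q$ new constant symbols and perform a diagonal extraction so that every Stone pairing $\langle\phi,(G_n,c_1^n,\dots,c_q^n)\rangle$ converges; this yields the required ${\rm FO}^{\rm local}$-convergent $q$-rooted sequence $(G_n^+)_{n\in\bbbn}$ along some composite subsequence $N(n)$.

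The main technical point is that $D$ must be chosen \emph{after} the intra-class distances have stabilized so that it can absorb them, while the inter-class distances must then be pushed beyond $2D$ using the fact that distances between representatives tend to infinity; the two extractions can be performed in this order because the second only tightens the first without disturbing the heavy-ball property of the retained representatives. Everything else is a routine application of compactness and countable enumeration of ${\rm FO}^{\rm local}$.
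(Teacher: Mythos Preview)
Your proof is correct and follows essentially the same route as the paper's: apply Lemma~\ref{lem:b1}, stabilize the number of heavy centers, stabilize pairwise distances, pick one representative per finite-distance class, set $D=2r+\max\{d_{ij}\text{ finite}\}$, and shift to make the $D$-balls disjoint. The only organizational difference is that the paper performs a single ${\rm FO}$-compactness extraction on the marked structures first and then reads off the distance limits from elementary convergence, whereas you do the bookkeeping extractions by hand and postpone the ${\rm FO}^{\rm local}$-compactness extraction to the end; both orderings work and yield the same objects.
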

\begin{proof}
Consider the signature obtained by adding $K=\lfloor 1/\epsilon\rfloor$ unary symbols $R_1,\dots,R_{K}$.
According to Lemma~\ref{lem:b1}, there exists, for each $G_n$, vertices
$z_1^n,\dots,z_{k_n}^n$ such that
\begin{itemize}
	\item for every $1\leq i\leq k_n$, it holds $|B_{2r}(G_n,z_i^n)|\geq\epsilon |G_n|$;
	\item for every $v\notin\bigcup_{i=1}^{k_n}B_{2r}(G_n,z_i^n)$, it holds $|B_{r}(G_n,v)|<\epsilon |G_n|$;
	\item $k_n \leq K$.
\end{itemize}
We mark vertex $z_1^n,\dots,z_{k_n}^n$ by $R_1,\dots,R_{k_n}$ thus obtaining a structure $\mathbf A_n$.
By compactness, the sequence $(\mathbf A_n)_{n\in\bbbn}$ has an ${\rm FO}$-converging subsequence 
$(\mathbf A_{N_1(n)})_{n\in\bbbn}$. Moreover, as the number of roots of $\mathbf A_n$ converges (by elementary convergence),
we can assume without loss of generality that the subsequence is such that all the structures $\mathbf A_{N_1(n)}$
use exactly the marks $R_1,\dots,R_p$ (with $p\leq K$). According to the elementary convergence of $(\mathbf A_{N_1(n)})_{n\in\bbbn}$, the limit 
$d_{i,j}=\lim_{n\rightarrow\infty}{\rm dist}_{\mathbf A_{N_1(n)}}(z_i^{N_1(n)},z_j^{N_1(n)})$ exists for every $1\leq i<i\leq p$ 
and this limit can be either an integer or $\infty$.
Let $I$ be a maximal subset of $\{1,\dots,p\}$ such that $d_{i,j}=\infty$ for every distinct $i,j\in I$, and let $q=|I|$.
Without loss of generality, we assume that $I=\{1,\dots,q\}$.
Define 
$$D=2r+\max\{d_{i,j}| 1\leq i<j\leq p\text{ and }d_{i,j}<\infty\}.$$
Then each ball $B_{2r}(\mathbf A_{N_1(n)},z_i^{N_1(n)})$ with $q<i\leq p$ is included in one of the balls
$B_{D}(\mathbf A_{N_1(n)},z_j^{N_1(n)})$ with $1\leq j\leq q$.
As $d_{i,j}=\infty$ for every $1\leq i<j\leq q$, there exists $n_0$ such that for every $n\geq n_0$ it holds
${\rm dist}_{\mathbf A_{N_1(n)}}(z_i^{N_1(n)},z_j^{N_1(n)})>2D$.
We let $N(n)=N_1(n+n_0)$, $c_i^n=z_i^{N(n)}$, and $G_n^+=\mathbf A_{N(n)}$.
\end{proof}
Let $0<\epsilon\leq 1$, let $r,D\in\bbbn$, and
let $(G_n^+)_{n\in\bbbn}$ be an ${\rm FO}$-convergent sequence of $q$-rooted graphs (with roots $c_i^n$) such that
\begin{itemize}
	\item $\lim_{n\rightarrow\infty} {\rm dist}_{G_n^+}(c_i^n,c_j^n)=\infty$ for every $1\leq i<j\leq q$;
	\item $B_D(G_n^+,c_i^n)\cap B_D(G_n^+,c_j^n)=\emptyset$ for every $1\leq i<j\leq q$ and every $n\in\bbbn$;
	\item $|B_D(G_n^+,c_i^n)|\geq\epsilon |G_n^+|$ for every $1\leq i\leq q$ and every $n\in\bbbn$;
	\item $|B_r(G_n^+,v)|<\epsilon |G_n^+|$ for every $v\notin\bigcup_{i=1}^q B_D(G_n^+,c_i^n)$ and every $n\in\bbbn$.
\end{itemize}
For $1\leq i\leq q$, we define the function
$f_i:\bbbn\rightarrow\bbbr^+$ by
$$f_i(d)=\lim_{n\rightarrow\infty}\frac{|B_d(G_n^+,c_i^n)|}{|G_n^+|}.$$
and we let $\lambda_i=\lim_{d\rightarrow\infty}f_i(d)$.

For $1\leq i\leq q$, we also define $g_i:\bbbn\times(0,1)\rightarrow\bbbn$ by
$$g_i(d,x)=\min\biggl\{n_0:\quad (\forall n\geq n_0)\ \biggl|\frac{|B_d(G_n^+,c_i^n)|}{|G_n^+|}-f_i(d)\biggr|<x\biggr\}.$$

We further define the function $h:\bbbn\rightarrow\bbbn$ by
$$h(x)=\min\{d:\ f_i(d)\geq\lambda_i-x\},$$
and the function $w:\bbbn\rightarrow\bbbn$ by
$$w(d)=\min\{n_0:\quad(\forall n\geq n_0)\ (\forall 1\leq i<j\leq q)\ {\rm dist}_{G_n^+}(c_i^n,c_j^n)>2d\}.$$

\begin{lemma}
For every $\epsilon'>0$ and $r'\in\bbbn$ there exist $d_0,n_0\in\bbbn$ such that for every $n\geq n_0$ it holds
\begin{itemize}
	\item $f_i(d_0-r')\geq \lambda_i-\epsilon'$,
	\item $|B_{d_0-r'}(G_n^+,c_i^n)-f_i(d_0-r')|<\epsilon' |G_n^+|$,
	\item $|B_{d_0}(G_n^+,c_i^n)-f_i(d_0)|<\epsilon' |G_n^+|$,
	\item $|B_{d_0+r'}(G_n^+,c_i^n)-f_i(d_0+r')|<\epsilon' |G_n^+|$,
	\item $|B_{d_0+r'}(G_n^+,c_i^n)\setminus B_{d_0-r'}(G_n^+,c_i^n)|<\epsilon'|G_n^+|$,
	\item the $c_i^n$'s are pairwise at distance strictly greater than $2d_0+4r'$.
\end{itemize}
\end{lemma}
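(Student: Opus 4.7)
The plan is to choose $d_0$ first, using only the limit functions $f_i$ and the numbers $\lambda_i$, and then pick $n_0$ large enough so that all empirical ball-size fractions are close to their limits and the roots $c_i^n$ are pairwise far apart. The crucial observation is that each $f_i$ is non-decreasing (because $B_d(G_n^+,c_i^n)\subseteq B_{d+1}(G_n^+,c_i^n)$) and bounded above by $\lambda_i=\sup_d f_i(d)$, so once $f_i(d_0-r')$ is within, say, $\epsilon'/4$ of $\lambda_i$, the boundary shell $f_i(d_0+r')-f_i(d_0-r')$ is automatically at most $\epsilon'/4$ as well. This single monotonicity fact is what makes everything work.

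Concretely, for each $1\le i\le q$ write $h_i(x)=\min\{d: f_i(d)\ge\lambda_i-x\}$ (well defined since $\lim_d f_i(d)=\lambda_i$), and set
\[
d_0 \;=\; r'+\max_{1\le i\le q} h_i(\epsilon'/4).
\]
This guarantees $\lambda_i-f_i(d_0-r')<\epsilon'/4$ for every $i$, which gives the first bullet with slack, and by monotonicity also yields the flatness bound $f_i(d_0+r')-f_i(d_0-r')<\epsilon'/4$. Next I take
\[
n_0 \;=\; \max\bigl(\{g_i(d,\epsilon'/4) : 1\le i\le q,\ d\in\{d_0-r',d_0,d_0+r'\}\}\cup\{w(d_0+2r')\}\bigr).
\]
The $g_i$-conditions deliver bullets (2), (3), (4) (again with slack $\epsilon'/4<\epsilon'$), and $w(d_0+2r')$ produces pairwise root distances strictly greater than $2(d_0+2r')=2d_0+4r'$, which is the last bullet.

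It remains to verify the fifth bullet, which is a direct triangle-inequality computation from the previous two steps:
\begin{align*}
|B_{d_0+r'}(G_n^+,c_i^n)\setminus B_{d_0-r'}(G_n^+,c_i^n)|
&= |B_{d_0+r'}(G_n^+,c_i^n)|-|B_{d_0-r'}(G_n^+,c_i^n)| \\
&\le \bigl(f_i(d_0+r')-f_i(d_0-r')+\tfrac{\epsilon'}{2}\bigr)|G_n^+| \\
&< \bigl(\tfrac{\epsilon'}{4}+\tfrac{\epsilon'}{2}\bigr)|G_n^+|<\epsilon'|G_n^+|.
\end{align*}

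I do not foresee a genuine obstacle here: every input ingredient (the functions $f_i$, $g_i$, $h_i$, $w$ and the constants $\lambda_i$) is supplied by the immediately preceding paragraph, and the argument is essentially a compactness-style bookkeeping exercise. The only mild subtlety, and the reason for writing $\epsilon'/4$ throughout rather than a larger fraction, is that bullet (5) is the binding constraint: it combines a shell of $f_i$-width at most $\epsilon'/4$ with two empirical errors of size $\epsilon'/4$ each, so a budget finer than $\epsilon'/2$ is needed to keep the total strictly below $\epsilon'$.
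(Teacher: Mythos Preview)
Your argument is correct and follows essentially the same route as the paper: choose $d_0$ via the functions $h_i$ so that $f_i(d_0-r')$ is already close to $\lambda_i$, then choose $n_0$ large via the functions $g_i$ and $w$, and deduce the shell estimate (bullet~5) from monotonicity of $f_i$ plus the triangle inequality. The only cosmetic differences are that the paper works with $\epsilon'/3$ rather than $\epsilon'/4$ and also imposes $d_0\ge D+2r$ (needed later, not for this lemma); your choice of $w(d_0+2r')$ for the last bullet is in fact the correct one.
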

\begin{proof}
Choose $d_0\geq D+2r$ and $\geq r+\max_i h_i(\epsilon'/3)$,
$n_0\geq\max_i \max(g_i(d_0-r,\epsilon'/3),g_i(d_0,\epsilon'/3),g_i(d_0+r,\epsilon'/3))$ and 
$n_0\geq w(d_0-r)$. Note that the third condition then follows from
\begin{equation*}
\begin{split}
\frac{|B_{d_0+r}(G_n^+,c_i^n)\setminus B_{d_0-r}(G_n^+,c_i^n)|}{|G_n^+|}&\leq
\biggl|\frac{|B_{d_0+r}(G_n^+,c_i^n)}{|G_n^+|}-f_i(d_0+r)\biggr|\\
&\quad+\biggl|\frac{|B_{d_0-r}(G_n^+,c_i^n)}{|G_n^+|}-f_i(d_0-r)\biggr|\\
&\quad+f_i(d_0+r)-f_i(d_0-r)
\end{split}
\end{equation*}
and the obvious inequality $f_i(d_0+r)-f_i(d_0-r)\leq\lambda_i-f_i(d_0-r)$.
\end{proof}
\begin{lemma}
For every $\epsilon'>0$ and $r'\in\bbbn$ there exist $d_0,n_0\in\bbbn$ with the following properties:
Define rooted graphs $H_{i,n}=(B_{d_0}(G_n^+,c_i^n),c_i^n)$
and unrooted graph $R_n=G_n^+\setminus\bigcup_i H_{i,n}$, let $G_n^*=R_n\cup\bigcup_i H_{i,n}$,
and let $G_n'={\rm Unmark}(G_n^*)$.
Then for every $n\geq n_0$:
\begin{itemize}
	\item $|H_{i,n}-\lambda_i|G_n^+||<\epsilon' |G_n^+|$,
	\item $|B_{d_0}(H_{i,n},c_i^n)|>(1-\epsilon'/\epsilon) \lambda_i |G_n^+|$,
	\item for every vertex $v\in R_n$ it holds
	$\lambda_0 |B_r(R_n,v)|<\epsilon |G_n^+|$,
	\item for every $r'$-local $\phi\in{\rm FO}_p^{\rm local}$ it holds
	$|\langle\phi,G_n\rangle-\langle\phi,G_n'\rangle|<p\epsilon'/\epsilon$.
\end{itemize}
\end{lemma}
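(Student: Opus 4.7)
The plan is to invoke the previous lemma with a sufficiently small parameter $\epsilon'' \leq \epsilon'/q$ (where $q\leq 1/\epsilon$) and the same $r'$, obtaining integers $d_0 \geq D+2r$ and $n_0$ that satisfy all six conclusions listed there. I then verify the four bullets one by one. The first three are essentially immediate bookkeeping; the fourth is where the real work lies.

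For the first bullet, since $H_{i,n}$ has universe $B_{d_0}(G_n^+, c_i^n)$, the chosen $d_0$ gives both $\bigl||B_{d_0}(G_n^+,c_i^n)|/|G_n^+|-f_i(d_0)\bigr|<\epsilon''$ and $\lambda_i-\epsilon''\leq f_i(d_0-r')\leq f_i(d_0)\leq\lambda_i$ by monotonicity of $f_i$. Combining these and taking $\epsilon''\leq\epsilon'/2$ yields $\bigl||H_{i,n}|-\lambda_i|G_n^+|\bigr|<\epsilon'|G_n^+|$. The second bullet then follows from the first, once we note that $B_{d_0}(H_{i,n},c_i^n)=H_{i,n}$: indeed, any shortest $G_n^+$-path from $c_i^n$ to $v\in H_{i,n}$ has length $\leq d_0$ and all its intermediate vertices are themselves at distance $\leq d_0$ from $c_i^n$, so the path stays in $H_{i,n}$. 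Since $d_0\geq D$ implies $\lambda_i\geq\epsilon$, the additive bound $(\lambda_i-\epsilon')|G_n^+|$ from the first bullet is at least $(1-\epsilon'/\epsilon)\lambda_i|G_n^+|$. For the third bullet, any $v\in R_n$ lies outside every $B_{d_0}(G_n^+,c_i^n)\supseteq B_{2r}(G_n^+,c_i^n)$, so the defining hypothesis on $(G_n^+)_{n}$ gives $|B_r(G_n^+,v)|<\epsilon|G_n^+|$; since $R_n$ is an induced subgraph of $G_n^+$ the inclusion $B_r(R_n,v)\subseteq B_r(G_n^+,v)$ is immediate (reading $\lambda_0$ as a harmless typo for $1$).

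The fourth bullet is the main content. The observation is that $G_n'$ is obtained from $G_n^+$ only by deleting the edges that cross some boundary $\partial H_{i,n}$ and by erasing the root marks. For an $r'$-local formula $\phi\in{\rm FO}_p^{\rm local}$, the value of $\phi$ at a tuple $(v_1,\ldots,v_p)$ depends only on the union of balls $B_{r'}(\,\cdot\,,v_j)$; hence $\phi(v_1,\ldots,v_p)$ is computed identically in $G_n^+$ and in $G_n'$ provided no $v_j$ is within distance $r'$ of any cut edge. The set of such bad vertices is contained in the thin annulus $\bigcup_{i=1}^q\bigl(B_{d_0+r'}(G_n^+,c_i^n)\setminus B_{d_0-r'}(G_n^+,c_i^n)\bigr)$, whose cardinality is at most $q\epsilon''|G_n^+|$ by the fifth bullet of the previous lemma. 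A union bound over the $p$ free variables then shows that the proportion of affected $p$-tuples is at most $pq\epsilon''\leq p\epsilon'/\epsilon$. The main obstacle in this plan is precisely this fourth step: one must carefully localise the effect of the surgery to a thin annulus so that $r'$-locality of $\phi$ confines the discrepancy to a controllably small fraction of tuples, which is exactly what the preceding lemma was designed to provide.
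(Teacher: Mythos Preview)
Your proof is correct and follows the same approach as the paper: both confine the discrepancy between $\langle\phi,G_n\rangle$ and $\langle\phi,G_n'\rangle$ to $p$-tuples meeting the thin annulus of width about $2r'$ around the cut radius $d_0$, bound that annulus via the preceding lemma, and use $r'$-locality to see that all other tuples are evaluated identically in $G_n^+$ and $G_n^*$. The only difference is packaging---the paper introduces a formula $\theta$ expressing ``$x_1$ is not in the annulus'' and applies Lemma~\ref{lem:1} separately in $G_n^+$ and in $G_n^*$ (so its triangle inequality actually yields $2p\epsilon'/\epsilon$ rather than the stated $p\epsilon'/\epsilon$), whereas you count affected tuples directly.
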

\begin{proof}
Obviously, $\langle\phi,G_n\rangle=\langle\phi,G_n^+\rangle$ and 
$\langle\phi,G_n'\rangle=\langle\phi,G_n^*\rangle$.

Let $\theta$ be the formula
defined as 
$$\bigl(\bigvee_i {\rm dist}(x_1,c_i)\leq d_0-r'\bigr)\vee
\bigl(\bigwedge_i {\rm dist}(x_1,c_i)> d_0+r'\bigr),$$
and let $\theta^{(p)}$ be the formula $\bigwedge_{i=1}^p\theta(x_i)$.

For every $n\geq n_0$, it holds 
$\langle\theta^{(p)},G_n^+\rangle=\langle\theta,G_n^+\rangle^p>(1-\epsilon'/\epsilon)^p$ hence
$1-\langle\theta^{(p)},G_n^+\rangle<p\epsilon'/\epsilon$.
Thus 
$$|\langle\phi,G_n^+\rangle-\langle\phi\wedge\theta^{(p)},G_n^+\rangle|\leq 1-\langle\theta^{(p)},G_n^+\rangle<p\epsilon'/\epsilon.$$

Also, $\langle\theta^{(p)},G_n^*\rangle=\langle\theta,G_n^*\rangle^p>(1-\epsilon'/\epsilon)^p$ hence
$1-\langle\theta^{(p)},G_n^*\rangle<p\epsilon'/\epsilon$.
Thus
$$|\langle\phi,G_n^*\rangle-\langle\phi\wedge\theta^{(p)},G_n^*\rangle|\leq 1-\langle\theta^{(p)},G_n^*\rangle<p\epsilon'/\epsilon$$

According to the $r'$-locality of $\phi$ it holds
$\langle\phi\wedge\theta^{(p)},G_n^+\rangle=\langle\phi\wedge\theta^{(p)},G_n^*\rangle$.
Hence $|\langle\phi,G_n\rangle-\langle\phi,G_n'\rangle|<2p\epsilon'/\epsilon$.
\end{proof}

Now for $a\in\bbbn$ we let $\epsilon'=\epsilon/a, r'=a$, $\hat H_{i,a}=H_{i,n_0(a)}$,
$\hat R_{a}=R_{n_0(a)}$ and $\hat G_a=G_{n_0(a)}'$.
Then it holds for every $n\in\bbbn$:
\begin{itemize}
	\item $(\hat H_{i,n})_{n\in\bbbn}$ is $\rho$-non-dispersive,
	\item $\lim_{n\rightarrow\infty} |\hat H_{i,n}|/|\hat G_n|=\lambda_i$,
	\item for every vertex $v\in R_n$ it holds
	$\lambda_0 |B_r(\hat R_n,v)|<\epsilon |G_n|$,
	\item for every $r$-local $\phi\in{\rm FO}_p^{\rm local}$ and every $n\geq r$ it holds
	$|\langle\phi,G_n\rangle-\langle\phi,\hat G_n\rangle|<p/n$ thus
	$(G_{n})_{n\in\bbbn}$ and $(\hat G_{n})_{n\in\bbbn}$ have the same ${\rm FO}^{\rm local}$-limit.
\end{itemize}

\begin{theorem}
\label{thm:break}
Let $(G_n)_{n\in\bbbn}$ be an ${\rm FO}_p^{\rm local}$-convergent sequence. Then there exist
a $\rho$-non-dispersive ${\rm FO}_p^{\rm local}$-convergent sequences $(\hat H_{i,n})_{n\in\bbbn}$ of rooted graphs
($i\in\bbbn$), a residual ${\rm FO}_p^{\rm local}$-convergent sequence $(\hat R_n)_{n\in\bbbn}$, 
positive real numbers $\lambda_i>0$, and an increasing function $\varphi:\bbbn\rightarrow\bbbn$ such that:
\begin{enumerate}
	\item  The sequences $(G_{n})_{n\in\bbbn}$ and $(\hat G_{n})_{n\in\bbbn}$ have the same ${\rm FO}^{\rm local}$-limit, where
$\hat G_n=R_n\cup\bigcup_{i\in\bbbn}{\rm Unmark}(\hat H_{i,n})$. 
\item If $(\hat H_{i,n})_{n\in\bbbn}$ has  ${\rm FO}_p^{\rm local}$-modeling limit $\mathbf L_i$,
$(\hat R_n)_{n\in\bbbn}$ has ${\rm FO}_1^{\rm local}$-modeling limit $\mathbf L_0$,  $\lambda_0=1-\sum_{i\in\bbbn}\lambda_i$, and $\mathbf L=\coprod_{i\geq 0}(\mathbf L_i,\lambda_i)$, then ${\rm Unmark}(\mathbf L)$  is an ${\rm FO}_p^{\rm local}$-modeling limit of $(G_n)_{n\in\bbbn}$.
\item Furthermore, if $(G_n)_{n\in\bbbn}$ is ${\rm FO}_p$-convergent, then it has a modeling ${\rm FO}_p$-limit, which can be obtained by first cleaning $\mathbf L_0$, computing $\mathbf L$, taking the disjoint union with some countable graph, and then forgetting marks.
\end{enumerate}
\end{theorem}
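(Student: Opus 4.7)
The overall plan is to iterate the single-step breaking construction established in the paragraphs preceding the theorem with a geometric sequence of thresholds $\epsilon_k=2^{-k}$, and then to combine the results by a standard diagonal extraction. The unnamed lemmas immediately above assert that for each fixed $\epsilon>0$ one can split an ${\rm FO}^{\rm local}$-convergent sequence into at most $\lfloor 1/\epsilon\rfloor$ rooted $\rho$-non-dispersive pieces whose root balls have density at least $\epsilon$, together with a residual remainder whose radius-$r$ balls all have density strictly less than $\epsilon$, and this decomposition perturbs Stone pairings of $r'$-local formulas by at most $p\epsilon'/\epsilon$ for any prescribed $\epsilon'>0$ and $r'\in\bbbn$.

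First I would apply this construction with $\epsilon_1=1/2$ to a subsequence of $(G_n)$, obtaining $q_1\leq 2$ non-dispersive sequences $(\hat H_{i,n})_{i\leq q_1}$ with masses $\lambda_i\geq 1/2$ and a remainder $(\hat R^{(1)}_n)$ whose balls of every fixed radius have density less than $1/2$ in $G_n$. I would then apply the same construction to $(\hat R^{(1)}_n)$ with $\epsilon_2=1/4$, producing further non-dispersive sequences and a remainder of ball density below $1/4$, and so on. Iterating with $\epsilon_k=2^{-k}$ produces a countable collection of rooted non-dispersive sequences $(\hat H_{i,n})_{i\in\bbbn}$ and, at stage $k$, a remainder whose radius-$r$ balls have density at most $2^{-k}$, hence ultimately a residual sequence $(\hat R_n)$.

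The main technical step is a diagonal extraction: choose an increasing $\varphi:\bbbn\rightarrow\bbbn$ so that for every fixed $i$ the sequence $(\hat H_{i,\varphi(n)})_n$ is ${\rm FO}_p^{\rm local}$-convergent, the ratios $|\hat H_{i,\varphi(n)}|/|G_{\varphi(n)}|$ converge to some $\lambda_i>0$, the residual remainder (taken at deep enough stages of the iteration) is ${\rm FO}_p^{\rm local}$-convergent and residual, and the cumulative error $|\langle\phi,G_{\varphi(n)}\rangle-\langle\phi,\hat G_{\varphi(n)}\rangle|$ tends to zero for every fixed $r$-local $\phi$; this yields property~(1). For~(2), once the modeling limits $\mathbf L_i$ are given, set $\lambda_0=1-\sum_{i\geq 1}\lambda_i$ and form $\mathbf L=\coprod_{i\geq 0}(\mathbf L_i,\lambda_i)$ on the disjoint-union standard Borel space. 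For any $r$-local $\phi\in{\rm FO}_p^{\rm local}$, all the $p$-tuples witnessing $\phi$ in $\mathbf L$ lie in a single connected component up to a null set (by $r$-locality), so $\langle\phi,\mathbf L\rangle$ decomposes as a weighted sum $\sum_{i\geq 0}\lambda_i^p\langle\phi,\mathbf L_i\rangle$, which matches $\lim_n\langle\phi,G_{\varphi(n)}\rangle$ by construction; the passage through the basic interpretation scheme ${\rm Unmark}$ is handled by Lemma~\ref{lemma:interpomega}.

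For~(3), I would first invoke Lemma~\ref{lem:mkclean} to turn $\mathbf L_0$ into a residual clean modeling (removing only a null set of connected components), then apply Lemma~\ref{lem:mkfull} to adjoin an arbitrary countable elementary limit of $(G_{\varphi(n)})_n$ to $\mathbf L$, upgrading the ${\rm FO}^{\rm local}$-limit to an ${\rm FO}$-limit. The principal obstacle is the bookkeeping of the diagonal extraction: one must control simultaneously the ${\rm FO}_p^{\rm local}$-convergence of countably many non-dispersive subsequences, the convergence of the masses $\lambda_i$ with $\sum_i\lambda_i\leq 1$, the uniform shrinking of the residual balls, and the geometric summation of approximation errors. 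The delicate point is that the non-dispersive components extracted at deeper stages may have diameters growing without bound, so the radius and deviation parameters at stage $k$ must be chosen compatibly with the $r$-locality of the test formulas; the parameter scheme $\epsilon'=\epsilon/a$, $r'=a$ of the preceding diagonal extraction is precisely designed for this, and its iteration provides the required joint control.
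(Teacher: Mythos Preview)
Your overall architecture---iterate the single-step breaking of Lemmas~\ref{lem:b1}--\ref{lem:b2} and the two unnamed lemmas with thresholds $\epsilon_k=2^{-k}$, then diagonalize---is exactly what the paper's construction leads to, and the bookkeeping you describe for part~(1) is sound.

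There is, however, a genuine error in your verification of~(2). The identity
\[
\langle\phi,\mathbf L\rangle=\sum_{i\geq 0}\lambda_i^p\,\langle\phi,\mathbf L_i\rangle
\]
is false. Take $\phi$ to be any tautology with $p\geq 2$ free variables: the left side equals~$1$ while the right side is $\sum_i\lambda_i^p<1$ whenever at least two of the $\lambda_i$ are positive. The underlying claim that ``all $p$-tuples witnessing $\phi$ lie in a single connected component up to a null set'' is not a consequence of $r$-locality; an $r$-local formula evaluated on a tuple whose coordinates lie in different components simply factors (cf.\ Lemma~\ref{lem:2}), it does not become false. The correct argument is to invoke Theorem~\ref{thm:convcomb}, which handles exactly the passage from ${\rm FO}^{\rm local}$-limits of the pieces $\mathbf L_i$ to the ${\rm FO}^{\rm local}$-limit of the weighted disjoint union $\coprod_i(\mathbf L_i,\lambda_i)$, and whose proof accounts properly for cross-component tuples.

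Your treatment of~(3) also has a gap. Lemma~\ref{lem:mkfull} applies only when the \emph{entire} modeling limit is residual and clean, but here $\mathbf L$ contains the positive-measure connected components coming from the $\mathbf L_i$ with $i\geq 1$. Cleaning $\mathbf L_0$ alone does not make $\mathbf L$ residual. To upgrade to a full ${\rm FO}_p$-limit one must treat the non-dispersive pieces via Lemma~\ref{lem:mkfullc} (which replaces the connected component of the root by one elementarily equivalent to that of a countable elementary limit) and combine this with the residual/clean machinery on $\mathbf L_0$; alternatively, one appeals to the ``moreover'' clause of Theorem~\ref{thm:convcomb}, which promotes the disjoint union to an ${\rm FO}$-limit under uniform elementary convergence of the family of pieces.
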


Our main result immediately follows from Theorem~\ref{thm:break}

\begin{maintheorem}{1}
\label{thm:main}
Let $\mathcal C$ be a hereditary class of structures.

Assume that for every $\mathbf A_n\in\mathcal C$ and every $\rho_n\in A_n$  ($n\in\bbbn$) the following
properties hold:
\begin{enumerate}
	\item  if $(\mathbf A_n)_{n\in\bbbn}$ is  ${\rm FO}_1^{\rm local}$-convergent and residual, then it has
a modeling ${\rm FO}_1^{\rm local}$-limit;
\item  if $(\mathbf A_n,\rho_n)_{n\in\bbbn}$ is ${\rm FO}^{\rm local}$-convergent
(resp. ${\rm FO}_p^{\rm local}$-convergent) and
$\rho$-non-dispersive  then it has
a modeling ${\rm FO}^{\rm local}$-limit (resp. a ${\rm FO}_p^{\rm local}$-limit).
\end{enumerate}

Then $\mathcal C$ admits modeling limits (resp. modeling ${\rm FO}_p$-limits).

Moreover, if in cases (1) and (2) the modeling limits satisfy the Strong Finitary Mass Transport Principle, then
$\mathcal C$ admits modeling limits (resp. modeling ${\rm FO}_p$-limits) that satisfy the Strong Finitary Mass Transport Principle.
\end{maintheorem}
\section{Extended Comb Lemma}

\begin{definition}
A {\em component relation system} for a class $\mathcal C$ of modelings is a sequence
$\varpi_d$ of equivalence relations such that for every $d\in\bbbn$ and for 
every $\mathbf A\in\mathcal C$ there is a partition
of the $\varpi_d$-equivalence classes of $A$ into two parts $\mathcal E_0(\varpi_d,\mathbf A)$ and
$\mathcal E_+(\varpi_d,\mathbf A)$ such that:
\begin{itemize}
	\item every class in $\mathcal E_0(\varpi_d,\mathbf A)$ is a singleton;
	\item $\nu_{\mathbf A}(\bigcup\mathcal E_0(\varpi_d,\mathbf A))<\epsilon(d)+\eta(|A|)$ (where $\lim_{d\rightarrow\infty}\epsilon(d)=\lim_{n\rightarrow\infty}\eta(n)=0$);
	\item two vertices $x,y$ in $\bigcup\mathcal E_+(\varpi_d,\mathbf A)$ belong to a same connected component of $\mathbf A$ if and only if $\mathbf A\models\varpi_d(x,y)$ (i.e. iff $x$ and $y$ belong to a same class).
\end{itemize}
\end{definition}
\begin{definition}[\cite{NPOM1arxiv}]
A family of sequence $(\mathbf A_{i,n})_{n\in\bbbn}\ (i\in I)$ 
of $\sig$-structures is
{\em uniformly elementarily convergent} if, for every formula
$\phi\in{\rm FO}_1(\sig)$ there is an integer $N$ such that it holds
$$
\forall i\in I,\ \forall n'\geq n\geq N,\quad 
(\mathbf A_{i,n}\models (\exists x)\phi(x))\Longrightarrow
(\mathbf A_{i,n'}\models (\exists x)\phi(x)).
$$
\end{definition}
(Note that if a family $(\mathbf A_{i,n})_{n\in\bbbn}\ (i\in I)$
of sequences is uniformly elementarily convergent, then
each sequence $(\mathbf A_{i,n})_{n\in\bbbn}$ is elementarily convergent.)

The proof of the next theorem essentially follows
the lines of Section 3.3 of~\cite{NPOM1arxiv}.
We do not provide the updated version of the proof here, as the proof is quite 
long and technical, but does not present particular additional difficulties when
compared to the original version.

\begin{theorem}[Extended comb structure]
\label{thm:ecomb}
Let $(\mathbf A_n)_{n\in\bbbn}$ be an ${\rm FO}^{\rm local}$-convergent sequence of
finite $\sig$-structures with component relation system $\varpi_d$.

Then there exist $I\subseteq\bbbn\cup\{0\}$ and, for each $i\in I$, a real $\alpha_i$ and a sequence
$(\mathbf B_{i,n})_{n\in\bbbn}$ of $\sig$-structures, such that
$\mathbf A_n=\bigcup_{i\in I}\mathbf B_{i,n}$ (for all $n\in\bbbn$), $\sum_{i\in{I}}\alpha_i=1$, and 
for each $i\in I$ it holds
\begin{itemize}
	\item $\alpha_i=\lim_{n\rightarrow\infty}\frac{|\mathbf B_{i,n}|}{|\mathbf A_n|}$, and
	$\alpha_i>0$ if $i\neq 0$;
	\item if $i=0$ and $\alpha_0>0$ then $(\mathbf B_{i,n})_{n\in\bbbn}$ is ${\rm FO}^{\rm local}$-convergent and
	residual;
\item if $i>0$ then $(\mathbf B_{i,n})_{n\in\bbbn}$ is ${\rm FO}^{\rm local}$-convergent
	 and non-dispersive.
	\end{itemize}
Moreover, if $(\mathbf A_n)_{n\in\bbbn}$ is ${\rm FO}$-convergent we can require
the family $\{(\mathbf B_{i,n})_{n\in\bbbn}: i\in I\}$ to be uniformly elementarily-convergent.
\end{theorem}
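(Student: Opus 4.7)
The plan is to extract, via a diagonal argument on $d$ and $n$, the asymptotically significant connected components of the $\mathbf A_n$ (those whose relative size in $\mathbf A_n$ does not vanish) and to collect everything else into a single residual piece. The component relation system $\varpi_d$ gives, for each fixed $d$ and each $n$, a way to detect the ``recognizable'' part $\bigcup\mathcal E_+(\varpi_d,\mathbf A_n)$ of $\mathbf A_n$: inside this part, the equivalence classes of $\varpi_d$ coincide with connected components, while the complement is a union of singleton classes with total $\nu_{\mathbf A_n}$-measure bounded by $\epsilon(d)+\eta(|A_n|)$, which vanishes as $d,n\to\infty$.

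First I would fix an enumeration: for each $d$ and $n$, list the classes in $\mathcal E_+(\varpi_d,\mathbf A_n)$ in decreasing order of size and let $\alpha_{i,d,n}$ denote the relative size of the $i$-th largest one. Compactness of $[0,1]^{\bbbn\times\bbbn}$ together with a diagonal extraction yields a subsequence of $n$ along which $\alpha_{i,d,n}\to\alpha_{i,d}$ for every $i,d\in\bbbn$. Because the classes of $\varpi_d$ inside $\mathcal E_+$ are (exactly) unions of connected components and because the defect in $\mathcal E_0$ shrinks as $d\to\infty$, the numbers $\alpha_{i,d}$ stabilize in $d$: any genuine limiting component of density $\alpha_i>0$ is eventually detected as one of the large classes, and a further diagonal extraction in $d$ gives a well-defined sequence $\alpha_1\geq\alpha_2\geq\cdots\geq 0$ with $\sum_{i\geq 1}\alpha_i\leq 1$, together with, for each $i\geq 1$ with $\alpha_i>0$, a canonical choice of substructure $\mathbf B_{i,n}\subseteq\mathbf A_n$ representing the $i$-th limiting component. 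I then set $\mathbf B_{0,n}=\mathbf A_n\setminus\bigcup_{i\geq 1}\mathbf B_{i,n}$, so that $\mathbf A_n=\bigcup_{i\in I}\mathbf B_{i,n}$ and $\sum_{i\in I}\alpha_i=1$.

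Verification of the three required properties is then direct. \emph{${\rm FO}^{\rm local}$-convergence} of each $(\mathbf B_{i,n})_{n\in\bbbn}$ is secured by one more diagonal extraction, using the countability of ${\rm FO}^{\rm local}(\sig)$ and that Stone pairings lie in $[0,1]$. \emph{Non-dispersivity} for $i\geq 1$ follows from the construction: up to a vanishing part, $\mathbf B_{i,n}$ \emph{is} a single connected component of $\mathbf A_n$ of asymptotic mass $\alpha_i$, hence for any $\epsilon>0$ we may choose $d$ large enough that some ball of radius $d$ in $\mathbf B_{i,n}$ accounts for a fraction at least $1-\epsilon$ of its mass; this is exactly the liminf condition in the definition of non-dispersivity. \emph{Residuality} of $(\mathbf B_{0,n})_{n\in\bbbn}$ (when $\alpha_0>0$) holds because every connected component of $\mathbf B_{0,n}$ has been explicitly excluded from having limiting relative density bounded away from $0$, so $\sup_{v}\nu_{\mathbf B_{0,n}}(B_r(\mathbf B_{0,n},v))\to 0$ as $n\to\infty$ for each fixed $r$.

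The main obstacle will be the bookkeeping of the double diagonal extraction: reconciling the $\varpi_d$-classes across different values of $d$ (since increasing $d$ both promotes singletons from $\mathcal E_0$ into $\mathcal E_+$ and can glue previously-separated pieces into a single recognized class) while maintaining a coherent identification of the limiting components $\mathbf B_{i,n}$ from one value of $d$ to the next, and controlling the residual carefully so that all of the asymptotic mass is accounted for. Finally, to upgrade to uniformly elementarily convergent families under the hypothesis that $(\mathbf A_n)_{n\in\bbbn}$ is ${\rm FO}$-convergent, I would appeal to Gaifman's theorem (Theorem~\ref{thm:gaifman0}) and perform one more extraction: every first-order sentence is a Boolean combination of local sentences, and for each local sentence $\sigma$, only boundedly many components of any $\mathbf A_n$ can simultaneously satisfy $\sigma$, which allows a single $N$ to work uniformly over $i\in I$ as required by the uniform elementary convergence condition.
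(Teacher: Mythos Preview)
The paper does not actually supply a proof of this theorem: it only states that ``the proof of the next theorem essentially follows the lines of Section~3.3 of~\cite{NPOM1arxiv}'' and omits the details. So there is no in-paper argument to compare against, only the outline you give.

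Your overall strategy --- order the $\varpi_d$-classes in $\mathcal E_+(\varpi_d,\mathbf A_n)$ by size, extract limits $\alpha_i$ by a diagonal argument, and dump the remainder into $\mathbf B_{0,n}$ --- is the natural ``comb'' picture and is almost certainly what the referenced argument does. However, your justification of non-dispersivity for $i\geq 1$ is wrong as written. You argue: ``up to a vanishing part, $\mathbf B_{i,n}$ is a single connected component of $\mathbf A_n$ of asymptotic mass $\alpha_i$, hence for any $\epsilon>0$ we may choose $d$ large enough that some ball of radius $d$ in $\mathbf B_{i,n}$ accounts for a fraction at least $1-\epsilon$ of its mass.'' Connectedness by itself does \emph{not} give this. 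Take $\mathbf A_n=P_n$, the path on $n$ vertices: it is a single connected component with $\alpha_1=1$, and your procedure selects it as $\mathbf B_{1,n}$; but for any fixed $d$ one has $\sup_v\nu_{P_n}(B_d(P_n,v))=(2d+1)/n\to 0$, so the sequence is residual, not non-dispersive. The radius $d$ in the definition of non-dispersive must be uniform in $n$, and nothing in ``single connected component of positive limiting mass'' provides that uniformity.

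What is missing is a genuine use of the parameter $d$ in the component relation system. The point of $\varpi_d$ is not merely that it detects components on $\mathcal E_+$, but that it does so by a relation depending on $d$; one needs to exploit this (for instance, that membership in a common $\varpi_d$-class forces the vertices to lie within some $g(d)$-ball, or an analogous diameter control) to conclude that a persistent $\varpi_d$-class of positive limiting mass is non-dispersive. Without such an argument, your selection rule will happily pick out components like long paths that should instead be absorbed into the residual piece $\mathbf B_{0,n}$. Your final paragraph on uniform elementary convergence is also too sketchy --- the claim that ``only boundedly many components can simultaneously satisfy $\sigma$'' is not obviously true for a general local sentence $\sigma$ and needs a real argument.
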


The following theorem is proved in~\cite{NPOM1arxiv}:
\begin{theorem}
\label{thm:convcomb}
Assume $J$ is a countable set, $\alpha_i$ ($i\in I$) are reals, and $(\mathbf B_{i,n})_{n\in\bbbn}$ ($i\in I$) are
sequences of $\sig$-structures such that
	$\alpha_i=\lim_{n\rightarrow\infty}\frac{|\mathbf B_{i,n}|}{|\bigcup_{j\in I}\mathbf B_{j,n}|}\ (\forall i\in I)$,
	$\sum_{i\in I}\alpha_i=1$, and for each $i\in I$, $(B_{i,n})_{n\in\bbbn}$ is ${\rm FO}^{\rm local}$-convergent.
Then $\mathbf A_n=\bigcup_{i\in I}\mathbf B_{i,n}$ is ${\rm FO}^{\rm local}$-convergent.
Also, if there exist  $\sig$-modelings $\mathbf L_i$ ($i\in I$) such that
 for each $i\in I$, $\mathbf B_{i,n}\xrightarrow{{\rm FO}^{\rm local}}\mathbf L_i$,
then $\mathbf A_n\xrightarrow{{\rm FO}^{\rm local}}\coprod_{i\in I}(\mathbf L_i,\alpha_i)$.

Moreover, if the family $\{(\mathbf B_{i,n})_{n\in\bbbn}: i\in I\}$ is uniformly elementarily-convergent,
then $(\mathbf A_n)_{n\in\bbbn}$ is ${\rm FO}$-convergent.
Also, if there exist  $\sig$-modelings $\mathbf L_i$ ($i\in I$) such that for each $i\in I$ it holds
$\mathbf B_{i,n}\xrightarrow{{\rm FO}}\mathbf L_i$ (for $i>0$ and $i=0$ if $\alpha_0>0$)
and
$\mathbf B_{0,n}\xrightarrow{{\rm FO}_0}\mathbf L_0$ (if $\alpha_0=0$)
then
$\mathbf A_n\xrightarrow{{\rm FO}}\coprod_{i\in I}(\mathbf L_i,\alpha_i)$.
\end{theorem}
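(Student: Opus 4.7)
The plan is to reduce everything to Gaifman's locality theorem together with the product structure that a local formula acquires on a disjoint union of connected components. The key observation is that if $\phi\in{\rm FO}_p^{\rm local}$ is $r$-local and the $\mathbf B_{i,n}$ are vertex-disjoint (being distinct components of $\mathbf A_n$), then satisfaction of $\phi(v_1,\dots,v_p)$ depends only on the $r$-balls around the $v_j$ inside the unique $\mathbf B_{i,n}$ containing each $v_j$. Consequently, for each map $\sigma:\{1,\dots,p\}\to I$ encoding how free variables are distributed across components, $\phi$ splits (up to logical equivalence on the relevant subset of $\mathbf A_n^p$) into local pieces $\phi_i^\sigma$ with pairwise disjoint sets of free variables, and one obtains the expansion
\[
\langle\phi,\mathbf A_n\rangle \;=\; \sum_{\sigma:\{1,\dots,p\}\to I}\Biggl(\prod_{i\in\sigma(\{1,\dots,p\})}\Bigl(\tfrac{|\mathbf B_{i,n}|}{|\mathbf A_n|}\Bigr)^{|\sigma^{-1}(i)|}\langle\phi_i^{\sigma},\mathbf B_{i,n}\rangle\Biggr).
\]

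First I would prove the ${\rm FO}^{\rm local}$ parts of the statement. Each term above converges as $n\to\infty$ because $|\mathbf B_{i,n}|/|\mathbf A_n|\to\alpha_i$ and, by hypothesis, $\langle\phi_i^{\sigma},\mathbf B_{i,n}\rangle\to\langle\phi_i^{\sigma},\mathbf L_i\rangle$. To interchange the countable sum with the limit, I would exploit $\sum_i\alpha_i=1$: given $\varepsilon>0$, pick a finite $J\subseteq I$ with $\sum_{i\notin J}\alpha_i<\varepsilon$; the contribution to $\langle\phi,\mathbf A_n\rangle$ from $\sigma$ whose image meets $I\setminus J$ is bounded, via union bound and Lemma~\ref{lem:1}, by $p\cdot(1-\sum_{i\in J}|\mathbf B_{i,n}|/|\mathbf A_n|)$, which tends to $p\sum_{i\notin J}\alpha_i\leq p\varepsilon$. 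The same expansion applied to the disjoint-union modeling $\coprod_{i\in I}(\mathbf L_i,\alpha_i)$ (whose measure puts mass $\alpha_i$ on $\mathbf L_i$, so that the $\sigma$-term becomes $\prod_i\alpha_i^{|\sigma^{-1}(i)|}\langle\phi_i^\sigma,\mathbf L_i\rangle$) yields the identical limit, proving $\mathbf A_n\xrightarrow{{\rm FO}^{\rm local}}\coprod_{i\in I}(\mathbf L_i,\alpha_i)$.

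Next, for the ${\rm FO}$-convergence parts, I would invoke Gaifman's theorem (Theorem~\ref{thm:gaifman0}): every first-order sentence is a Boolean combination of local sentences of the form $\exists x_1\dots\exists x_k\,(\bigwedge_{i<j}{\rm dist}(x_i,x_j)>2r\wedge\bigwedge_i\psi_i(x_i))$ with $r$-local $\psi_i$. Satisfaction of such a sentence in $\mathbf A_n=\bigcup_i\mathbf B_{i,n}$ reduces to the purely combinatorial question of how the $k$ scattered witnesses can be distributed across components, and thus depends only on the family of truth values $\bigl(\mathbf B_{i,n}\models(\exists x_1\dots\exists x_m)(\bigwedge_{i<j}{\rm dist}(x_i,x_j)>2r\wedge\bigwedge_i\psi_{j_i}(x_{j_i}))\bigr)_{i\in I}$ for all subfamilies $\{\psi_{j_1},\dots,\psi_{j_m}\}$. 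Uniform elementary convergence of the family $\{(\mathbf B_{i,n})_n:i\in I\}$ is exactly what is needed to ensure these truth values stabilize uniformly in $i$, so each local sentence converges in $\mathbf A_n$, giving ${\rm FO}$-convergence. For the modeling ${\rm FO}$-limit, the measure-bearing structure is $\coprod_{i\in I}(\mathbf L_i,\alpha_i)$ from the first part; when $\alpha_0=0$ the residual sequence $\mathbf B_{0,n}$ contributes no measure to any Stone pairing but its elementary theory must still be carried by a measure-zero summand, which is why only ${\rm FO}_0$-convergence to $\mathbf L_0$ is required in that case, and adjoining $\mathbf L_0$ as a set of connected components of measure zero suffices to match the elementary limit without disturbing any Stone pairing $\langle\phi,\,\cdot\,\rangle$.

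\textbf{Main obstacle.} The most delicate point is the uniform tail control together with the asymmetric ${\rm FO}_0$-only hypothesis when $\alpha_0=0$: one must verify that scattered-existential Gaifman sentences whose witnesses may lie partly in the residual part still converge using only the elementary information about $\mathbf B_{0,n}$ combined with uniform elementary convergence across all $i\in I$. This is the step where uniform elementary convergence is indispensable, as it rules out a pathological countable accumulation of local existential statements over the infinitely many components; making this quantitative (fixing the number of witnesses $k$ and the locality radius $r$, and then using uniform stabilization over $i\in I$ to truncate to a finite $J$) and checking that the adjoined measure-zero copy of $\mathbf L_0$ realizes the correct elementary theory without affecting any Stone pairing is where the proof requires the most care.
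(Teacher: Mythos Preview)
The paper does not actually prove Theorem~\ref{thm:convcomb}: it is stated with the preface ``The following theorem is proved in~\cite{NPOM1arxiv}'' and no argument is given here. So there is no in-paper proof to compare your proposal against; the result is imported wholesale from the companion paper.

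That said, your outline is the expected one and almost certainly matches the argument in~\cite{NPOM1arxiv}: decompose local Stone pairings according to how the free variables are distributed among the components, control the countable tail via $\sum_i\alpha_i=1$, and handle sentences by Gaifman's theorem together with uniform elementary convergence. One technical wrinkle worth flagging: for a fixed distribution map $\sigma$, an $r$-local $\phi$ does not in general factor as a \emph{single} conjunction $\bigwedge_i\phi_i^\sigma$ over the groups $\sigma^{-1}(i)$; rather, by a Feferman--Vaught style argument it is equivalent (on tuples distributed according to $\sigma$) to a finite Boolean combination $\bigvee_k\bigwedge_i\psi_{i,k}^\sigma$ of such products. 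Your displayed expansion of $\langle\phi,\mathbf A_n\rangle$ should therefore carry an extra finite sum over $k$ (or, equivalently, be phrased in terms of local types of the groups). This does not affect the convergence argument or the tail estimate, but as written the single-product formula is an oversimplification.
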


\section{Limit of Forests}
\label{sec:trees}
\subsection{Limit of Residual Sequences of Forests}
In this section we shall prove that every 
${\rm FO}_1^{\rm local}$-convergent residual sequence of trees
 has a modeling ${\rm FO}_1^{\rm local}$-limit that satisfies
the Strong Finitary Mass Transport Principle.

In this section, we consider rooted forests with edges oriented from the roots. Roots are marked by unary relation $M$ and arcs
by binary relation $R$. Rooted forests are defined by the following countable set of axioms:
\begin{enumerate}
	\item for each $r\in\bbbn$, a formula stating that two distinct roots are at distance at least $r$ (for each $r\in\bbbn$);
\item  a formula stating that every vertex has indegree exactly one if it is not a root, and zero if it is a root;
\item for each $r\in\bbbn$, a formula stating that there is no circuit of length $r$.
\end{enumerate}
In other words, a rooted forest is an directed acyclic graph such that all the vertices but the roots (which are sources) have indegree $1$, and
such that each connected component contains at most one root.
Note that a rooted forest has two types of connected components: connected components that contain a root, and (infinite) connected components
that do not contain a root. 

We first state a lemma relating first-order properties of $p$-tuples in a rooted forest to first-order properties of individual vertices.

\begin{lemma}
\label{lem:npto1}
Fix rooted forests $\mathbf Y,\mathbf Y'$. 
Let $u_1,\dots,u_p$ be $p$ vertices of $\mathbf Y$, let $u_1',\dots,u_p'$
be $p$ vertices of $\mathbf Y'$, and let $r,n\in\bbbn$.
We denote by ${\rm Father}$ both the first-order defined mapping that maps a non-root vertex to its unique in-neighbor and leaves roots fixed.

Assume that for every $1\leq i\leq p$ and every $pr$-local formula $\phi\in{\rm FO}_1^{\rm local}$ with quantifier rank
at most $n+r$ it holds 
$$\mathbf Y\models\phi(u_i)\quad\iff\quad \mathbf Y'\models\phi(u_i')$$ 
and
that for every $1\leq i,j\leq p$ and every $0\leq k,l\leq r$, it holds
$$\mathbf Y\models{\rm Father}^k(u_i)={\rm Father}^l(u_j)\quad\iff\quad\mathbf Y'\models{\rm Father}^k(u_i')={\rm Father}^l(u_j').$$

Then, for every $r$-local formula $\psi\in{\rm FO}_p^{\rm local}$  with quantifier rank
at most $n$ it holds
$$\mathbf Y\models\psi(u_1,\dots,u_p)\quad\iff\quad\mathbf Y'\models\psi(u_1',\dots,u_p').$$
\end{lemma}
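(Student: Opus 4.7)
The plan is to translate the statement into an Ehrenfeucht--Fra{\"i}ss{\'e} game on the relevant $r$-neighborhoods and to exhibit a duplicator winning strategy that exploits the rooted-forest structure together with the two hypotheses. Because $\psi$ is $r$-local of quantifier rank at most $n$, its value at $(u_1,\dots,u_p)$ depends only on the induced substructure $B_r(\mathbf{Y},u_1,\dots,u_p)$ with the $u_i$'s marked, and analogously for $\mathbf{Y}'$. By the standard EF-game characterization of first-order equivalence up to quantifier rank $n$, the conclusion reduces to showing that the duplicator has a winning strategy in the $n$-round EF game played on $(B_r(\mathbf{Y},u_1,\dots,u_p),u_1,\dots,u_p)$ versus $(B_r(\mathbf{Y}',u_1',\dots,u_p'),u_1',\dots,u_p')$.

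Next I would exploit the forest structure. In a rooted forest, every vertex of $B_r(\mathbf{Y},u_i)$ is obtained from $u_i$ by first ascending some $k\leq r$ steps to an ancestor and then descending at most $r-k$ steps; consequently two balls $B_r(\mathbf{Y},u_i)$ and $B_r(\mathbf{Y},u_j)$ lie in the same Gaifman-component of the union precisely when $u_i,u_j$ have a suitable common ancestor, which is exactly the information controlled by hypothesis (2). Each Gaifman-component of $B_r(\mathbf{Y},u_1,\dots,u_p)$ is therefore the union of at most $p$ overlapping $r$-balls, has diameter $O(pr)$, and is entirely contained in $B_{pr}(\mathbf{Y},u_i)$ for any $u_i$ lying in it. Thus hypothesis (2) matches up the partitions of $\{1,\dots,p\}$ into components between $\mathbf{Y}$ and $\mathbf{Y}'$, and places each $u_i'$ in the corresponding component of the $r$-ball of the primed tuple.

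The duplicator's strategy is then built round by round. When spoiler plays a vertex $w\in B_r(\mathbf{Y},u_i)$, I would encode the isomorphism type of $(u_i,w)$ inside $B_{pr}(\mathbf{Y},u_i)$---together with the ancestor-relations of $w$ to the already played points and to the other $u_j$'s---as a $pr$-local formula $\phi_w\in{\rm FO}_1^{\rm local}$ about $u_i$ of quantifier rank at most $n+r-s$, where $s$ is the current round number. Hypothesis (1) applied to $\phi_w$ then produces a vertex $w'\in B_{pr}(\mathbf{Y}',u_i')$ with the analogous local position, and hypothesis (2) ensures that $w'$ respects the ancestor relations with every $u_j'$. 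Iterating for $n$ rounds yields a partial isomorphism of the two marked $r$-balls, as required.

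The main obstacle will be the bookkeeping of the quantifier-rank and locality-radius budgets, together with the case analysis when $w$ lies simultaneously in $B_r(u_i)$ for several distinct $i$'s (which forces one to choose $i$ compatibly with hypothesis (2)). The statement of the lemma has been tuned precisely for this accounting: the ``$+r$'' in the quantifier-rank bound of hypothesis (1) absorbs the $r$ existential quantifiers needed to descend from an ancestor of $u_i$ to an arbitrary vertex of $B_r(u_i)$, and the $pr$-local radius bounds the diameter of each component of the $r$-ball of the tuple, so that the entire relevant neighbourhood is visible from any single $u_i$ in it. Verifying that the formula $\phi_w$ can indeed be written within this shrinking budget at every round is the technical crux of the proof.
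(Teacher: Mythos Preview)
Your Ehrenfeucht--Fra{\"i}ss{\'e} strategy is the right approach and is what the paper intends: the paper gives no self-contained argument here, stating only that the proof is ``similar to the proof of Lemma~4.13 in~\cite{NPOM1arxiv}'', which carries out exactly this kind of back-and-forth construction for rooted colored trees of bounded height. So there is nothing substantive to compare against beyond confirming that your outline matches the standard template.

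One imprecision in your outline is worth flagging. You say that two balls $B_r(\mathbf Y,u_i)$ and $B_r(\mathbf Y,u_j)$ lie in the same Gaifman component of the union ``precisely when $u_i,u_j$ have a suitable common ancestor, which is exactly the information controlled by hypothesis~(2)''. That is not quite right: the balls meet whenever $d(u_i,u_j)\le 2r$, yet the lowest common ancestor may sit more than $r$ steps above one of the two vertices (for instance $u_i$ at depth~$14$ and $u_j$ at depth~$5$ on distinct branches below a root at depth~$0$, with $r=10$), so no identity ${\rm Father}^k(u_i)={\rm Father}^l(u_j)$ with $k,l\le r$ records the overlap directly. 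The two hypotheses must therefore be played off against each other throughout the game rather than used in separate phases---hypothesis~(1) supplies the ambient $pr$-local structure around each $u_i$, while hypothesis~(2) pins down which nearby ancestors coincide---and your round-by-round description already does this. The issue is only with the informal preamble, not with the strategy itself. The rank and radius bookkeeping you identify as the crux is indeed where all the work lies.
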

\begin{proof}
The proof is similar to the proof of Lemma~4.13 in~\cite{NPOM1arxiv}.
\end{proof}

\begin{lemma}
\label{lem:tree1}
Every ${\rm FO}_1^{\rm local}$-convergent residual sequence of forests
$(Y_n)_{n\in\bbbn}$ has a modeling ${\rm FO}_1^{\rm local}$-limit that satisfies the
Strong Finitary Mass Transport Principle.
\end{lemma}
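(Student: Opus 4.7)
The plan is to construct $\mathbf L$ on a standard Borel universe obtained by decorating the Stone space of $\mathrm{FO}_1^{\rm local}$-types with a Lebesgue coordinate, and to define the father relation via a Borel family of Lebesgue-preserving maps designed so that local types, residuality, and the mass-transport inequality all come out correctly. Let $S$ denote the Stone space of complete $\mathrm{FO}_1^{\rm local}$-types in the signature of rooted forests, $S_M\subseteq S$ the clopen subset of root types (those $t$ with $M(x)\in t$), and $\tilde F:S\setminus S_M\to S$ the father-type map, which is continuous because the $r$-type of a non-root vertex determines the $(r-1)$-type of its father via the axioms of the section. For each $n$ the type map $v\mapsto\mathrm{tp}(v)$ pushes the uniform measure on $Y_n$ forward to a Borel probability $\mu_n$ on $S$, and by $\mathrm{FO}_1^{\rm local}$-convergence the sequence $\mu_n$ converges weakly to a Borel probability $\mu$ on $S$.

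Take the limit modeling to be $L:=S\times[0,1]$ with Borel measure $\nu_{\mathbf L}:=\mu\otimes\lambda$ and marking $M^{\mathbf L}:=S_M\times[0,1]$. Select, in a jointly Borel manner, a family of Lebesgue-preserving maps $\psi_t:[0,1]\to[0,1]$ for $t\in S\setminus S_M$ whose inverse-branch decomposition of $[0,1]$ realizes the multiplicities of child-types prescribed by $\tilde F(t)$, and declare the father of each non-root point $(t,x)$ to be $(\tilde F(t),\psi_t(x))$. One must then verify that this makes $\mathbf L$ a rooted forest in the sense of the section, that $\mathbf L$ is a relational sample space, and that the $\mathrm{FO}_1^{\rm local}$-type of $(t,x)$ in $\mathbf L$ is exactly $t$.

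Assuming this construction, the remaining verifications follow mechanically. For every $\phi\in\mathrm{FO}_1^{\rm local}$, the solution set $\Omega_\phi(\mathbf L)$ is the cylinder $K_\phi\times[0,1]$, where $K_\phi=\{t\in S:\phi\in t\}$ is clopen, whence $\langle\phi,\mathbf L\rangle=\mu(K_\phi)=\lim_n\mu_n(K_\phi)=\lim_n\langle\phi,Y_n\rangle$, showing that $\mathbf L$ is an $\mathrm{FO}_1^{\rm local}$-modeling limit. Residuality follows from Lemma~\ref{lem:res}: each $r$-ball $B_r(\mathbf L,(t,x))$ meets any fiber $\{s\}\times[0,1]$ in only countably many $\psi$-iterates and inverse branches of $x$, hence has $\nu_{\mathbf L}$-measure zero. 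For the Strong Finitary Mass Transport Principle, given measurable $X,Y\subseteq L$ with each point of $X$ having at least $a$ neighbors in $Y$ and each point of $Y$ at most $b$ neighbors in $X$, both bounds on the measure of the arc relation between $X$ and $Y$ are obtained by integrating the edge indicator along the Lebesgue-preserving father map, yielding $a\,\nu_{\mathbf L}(X)\le b\,\nu_{\mathbf L}(Y)$.

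The main obstacle is the construction of the Borel family $(\psi_t)_{t\in S\setminus S_M}$ so that (i) each $\psi_t$ is Lebesgue-preserving, (ii) the inverse-branch decomposition of $\psi_t$ realizes the child-type multiplicities encoded by $t$ together with $\mu$, and (iii) the resulting directed graph is genuinely a rooted forest whose $\mathrm{FO}_1^{\rm local}$-type at each $(t,x)$ coincides with $t$. Requirement (iii) is where Lemma~\ref{lem:npto1} (specialized to $p=1$) enters, reducing type matching to consistency of iterated father-types and of local branching patterns, and allowing a recursive verification on the locality radius. Once this is accomplished, the remaining verifications reduce to routine cylinder-set and change-of-variables manipulations.
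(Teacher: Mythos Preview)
Your overall architecture matches the paper's: build the limit on the Stone space of $\mathrm{FO}_1^{\rm local}$-types decorated with a Lebesgue coordinate, use the continuous father-type map $\tilde F$, and recover the local type of each point from its first coordinate. The verification that $\langle\phi,\mathbf L\rangle=\mu(K_\phi)$ and the appeal to Lemma~\ref{lem:npto1} are the right moves.

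However, you have pushed the entire content of the proof into the phrase ``select, in a jointly Borel manner, a family of Lebesgue-preserving maps $\psi_t$'' and then declared this the main obstacle without supplying the idea that resolves it. The real difficulty is your requirement~(iii), acyclicity, and it is \emph{not} automatic on $S\times[0,1]$. Consider the limit of paths $P_n$: the limit measure $\mu$ is supported on a single type $t_0\in S$ satisfying $\tilde F(t_0)=t_0$, and each vertex has exactly one child of type $t_0$. Your $\psi_{t_0}$ must therefore be a Lebesgue-preserving bijection of $[0,1]$ with \emph{no periodic orbit whatsoever}; more generally, for any finite cycle of types under $\tilde F$, the composed $\psi$ along the cycle must be aperiodic. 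Nothing in your sketch hints at how to produce such maps, let alone a Borel family of them.

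The paper solves this by splitting $S$ into the clopen pieces $S_1^{(r)}$ (types at distance $r$ from a root) and the residual closed set $S_1^\circ$ (types with no root at any finite distance), and adjoining an \emph{extra} $\mathbb S^1$ factor over $S_1^\circ$ on which the father map acts by a fixed irrational rotation $\theta\mapsto\theta+\theta_0$. This single device guarantees acyclicity on the rootless part (orbits of an irrational rotation are infinite), while on each $S_1^{(r)}$ acyclicity is automatic because the father map strictly decreases $r$. The Lebesgue coordinate is then used separately, via an explicit map $\xi(T,\alpha)$ depending on the sibling multiplicity $w(T)$ (including a binary-digit extraction $\zeta$ when $w(T)=\infty$), to realise child multiplicities. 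Your proposal conflates these two roles into a single $\psi_t$ and leaves both unresolved. Without the irrational-rotation idea (or an equivalent aperiodic mechanism), the construction cannot be completed.
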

\begin{proof}
Let $(Y_n)_{n\in\bbbn}$ be an ${\rm FO}_1^{\rm local}$-convergent residual sequence of forests.
We mark a root (by unary relation $M$) in each connected component of $Y_n$ and orient the edges of $Y_n$ from the root
(we denote by $R(x,y)$ the binary relation expressing the existence of an arc from $x$ to $y$). 
By extracting
a subsequence, we assume that (the rooted oriented) $(Y_n)_{n\in\bbbn}$ is ${\rm FO}_1^{\rm local}$-convergent.

Connected components of the limit may or not contain a root.
For instance, if $Y_n$ is the union of $\sqrt{n}$ copies of stars of order $\sqrt{n}$, then every connected 
component in the limit contains a root; however, if $Y_n$ is a path of length $n$, only one connected component (with zero measure) in the limit
contains a root.

Local formulas form a Boolean algebra. Let $S(\mathcal B({\rm FO}_1^{\rm local}))$ be the associated Stone space, and let 
$S_1$ be the closed subspace of $S(\mathcal B({\rm FO}_1^{\rm local}))$ formed by all the $T\in S(\mathcal B({\rm FO}_1^{\rm local}))$ 
that contain all the axioms of rooted forests (see the beginning of this section).

As $(Y_n)_{n\in\bbbn}$ is ${\rm FO}_1^{\rm local}$-convergent,  there exists (see~\cite{CMUC,NPOM1arxiv}) a limit measure $\mu$ on $S_1$ such that
for every $\phi\in {\rm FO}_1^{\rm local}$ it holds
$$
\lim_{n\rightarrow\infty}\langle\phi,Y_n\rangle=\int_{S_1}\mathbf 1_{K(\phi)}(T)\,{\rm d}\mu(T),
$$
where $K(\phi)=\{T\in S_1: \phi\in T\}$.

We partition $S_1$ into countably many measurable parts as follows:
\begin{itemize}
	\item for each non-negative integer $r$, $S_1^{(r)}$ denotes the clopen subset of $S_1$ defined by
	$$S_1^{(r)}=\{T: ((\exists z)\ M(z)\wedge {\rm dist}(z,x_1)=r)\in T\};$$
	\item $S_1^\circ$ is the closed subset of $S_1$ defined by
	$$S_1^\circ=S_1\setminus\bigcup_{r\geq 0}S_1^{(r)}.$$
\end{itemize}
We further define a measurable mapping $\zeta:[0,1)\rightarrow[0,1)$ as follows:
Let $x\in[0,1)$, $x=\sum_{i\geq 0}x_i2^{-i}$ (with $\{i: x_i=1\}$ not cofinite). 
We define
$$\zeta(x)=(\sum_{i\in\bbbn}x_{2i}2^{-i}) \bmod 1.$$

We define $F:S_1\rightarrow S_1$ by
$$
F(T)=\begin{cases}
T&\text{if }M(x_1)\in T\\
\{\phi:\ ((\exists z) R(z,x_1)\wedge\phi(z))\in T\}&\text{otherwise}
\end{cases}
$$
(Note that $F(T)$ is indeed an ultrafilter of $\mathcal B({\rm FO}_1^{\rm local})$ 
as there exists exactly one $z$ such that $R(z,x_1)$.)

Define $w:S_1\setminus S_1^{(0)}\rightarrow\{0,1,\dots,\infty\}$ as the supremum of the integers
$k$ such that there exists a tree $\mathbf A$ with universe $A$ and $a\in A$ such that
there are at least $k$ sons $b_1,\dots,b_k$ of $a$ such that 
	for all $\phi\in{\rm FO}_1^{\rm local}$ it holds $\mathbf A\models\phi(b_i)$ if and only if $\phi\in T$.

Finally, we define the mapping $\xi:(S_1\setminus S_1^{(0)})\times[0,1)\rightarrow [0,1)$ by
$$
\xi(T,\alpha)=\begin{cases}
	w(T)\alpha\bmod 1&\text{if }w(T)<\infty\\
	\zeta(\alpha)&\text{otherwise}
	\end{cases}
$$
Note that for every $(T,\alpha)\in (S_1\setminus S_1^{(0)})\times[0,1)$, the set
$$\{\alpha'\in[0,1):\ \xi(T,\alpha')=\xi(T,\alpha)\}$$ 
has cardinality $w(T)$ (if $w(T)<\infty$) and is infinite (if $w(T)=\infty$).

Using these special functions, we can construct limit modelings of residual sequences of forests with the following simple form:

Let $Z=\bigl(\bigcup_{r\geq 0}S_1^{(r)}\times [0,1)\bigr)\cup \bigl(S_1^\circ\times [0,1)\times \mathbb S^1\bigr),$
where $\mathbb S^1$ is the unit circle (identified here with reals mod $2\pi$).
It is clear that $Z$ is the standard Borel space.
We fix a real $\theta_0$ such that $\theta_0$ and $\pi$ are incommensurable, and we define a rooted directed forest $\mathbf Z$ on $Z$ has follows:
\begin{itemize}
	\item for $z\in Z$, it holds $M(z)$ if and only if $z\in S_1^{(0)}\times [0,1)$;
	\item for positive integer $r$ and $z\in S_1^{(r)}\times [0,1)$, $z=(T,\alpha)$, the vertex $z$ has exactly 
	one incoming edge from the vertex $(F(T),\xi(T,\alpha))\in S_1^{(r-1)}\times[0,1)$;
	\item for $z\in S_1^\circ\times[0,1)\times\mathbb S^1$, $z=(T,\alpha,\theta)$ the vertex $z$ has exactly
	one incoming edge from the vertex $(F(T),\xi(T,\alpha),\theta+\theta_0)$.
\end{itemize}
It is easily checked that for $z\in Z$ ($z=(T,\alpha)$ or $z=(T,\alpha,\theta)$), the set of formulas
$\phi\in{\rm FO}_1^{\rm local}$ such that $\mathbf Z\models\phi(z)$ is exactly $T$.

We now prove that $\mathbf Z$ is a relational sample space.
It suffices to prove that for every $p\in\bbbn$ and every $\varphi\in{\rm FO}_p^{\rm local}$ the set 
$$
\Omega_\varphi(\mathbf Z)=\{(v_1,\dots,v_p)\in V_h^p:\ \mathbf
Z\models\varphi(v_1,\dots,v_p)\}
$$
is measurable.

Let $\varphi\in{\rm FO}_{p}^{\rm local}$ and let $n={\rm qrank}(\varphi)$. 
We partition $V_h$ into equivalence classes modulo
$\equiv^{n+r}$, which we denote $C_1,\dots,C_N$.
Let $i_1,\dots,i_p\in[N]$ and, for $1\leq j\leq p$, let
$v_{j}$ and $v_{j}'$ belong to $C_{i_j}$.
According to
Lemma~\ref{lem:npto1}, if for every $1\leq i<j\leq p$ and $1\leq k,l\leq r$ it holds
$${\rm Father}^k(u_i)={\rm Father}^{l}(u,j)\quad\iff\quad {\rm Father}^k(u_i')={\rm Father}^{l}(u,j')$$
then it holds
$$(v_1,\dots,v_p)\in \Omega_\varphi(\mathbf Z)\quad\iff\quad (v_1',\dots,v_p')\in\Omega_\varphi(\mathbf Z).$$
According to the encoding of the vertices of $\mathbf Z$, the conditions
on the common ancestors rewrite as equalities and inequalities of iterated measurable functions $[0,1]\rightarrow[0,1)$.
It follows that $\Omega_\varphi(\mathbf Z)$ is measurable. Thus $\mathbf Z$ is a relational sample space.

We define a probability measure $\nu_{\mathbf Z}$ on $\mathbf Z$ to turn $\mathbf Z$ into a modeling as follows:
\begin{itemize}
	\item on $\bigcup_{r\geq 0}S_1^{(r)}\times [0,1)$, $\nu_{\mathbf Z}$ is the product of the restriction of $\mu$ and the Borel measure on $[0,1)$;
	\item on $S_1^\circ\times [0,1)\times\mathbb S^1$, $\nu_{\mathbf Z}$ is the product of the restriction of $\mu$, the Borel measure on $[0,1)$, and the Haar
	(rotation invariant) probability measure of $\mathbb S^1$.
\end{itemize}

It is easily checked from the above definition of $\mathbf Z$ (and of course $\zeta,F,\xi,w$) that 
the modeling $\mathbf Z$ is a modeling ${\rm FO}_1^{\rm local}$-limit of $(Y_n)_{n\in\bbbn}$, and that 
if $\mu$ satisfies the Finitary Mass Transport Principle then $\mathbf Z$ satisfies
the Strong Finitary Mass Transport Principle.
\end{proof}

The construction of a modeling ${\rm FO}_1^{\rm local}$-limit for 
the root-free part resembles spinning wheel of a limit forest (cf~\cite{spw}) and it 
 is schematically illustrated on Fig~\ref{fig:band}.

\begin{figure}[ht]
	\centering
		\includegraphics[width=0.75\textwidth]{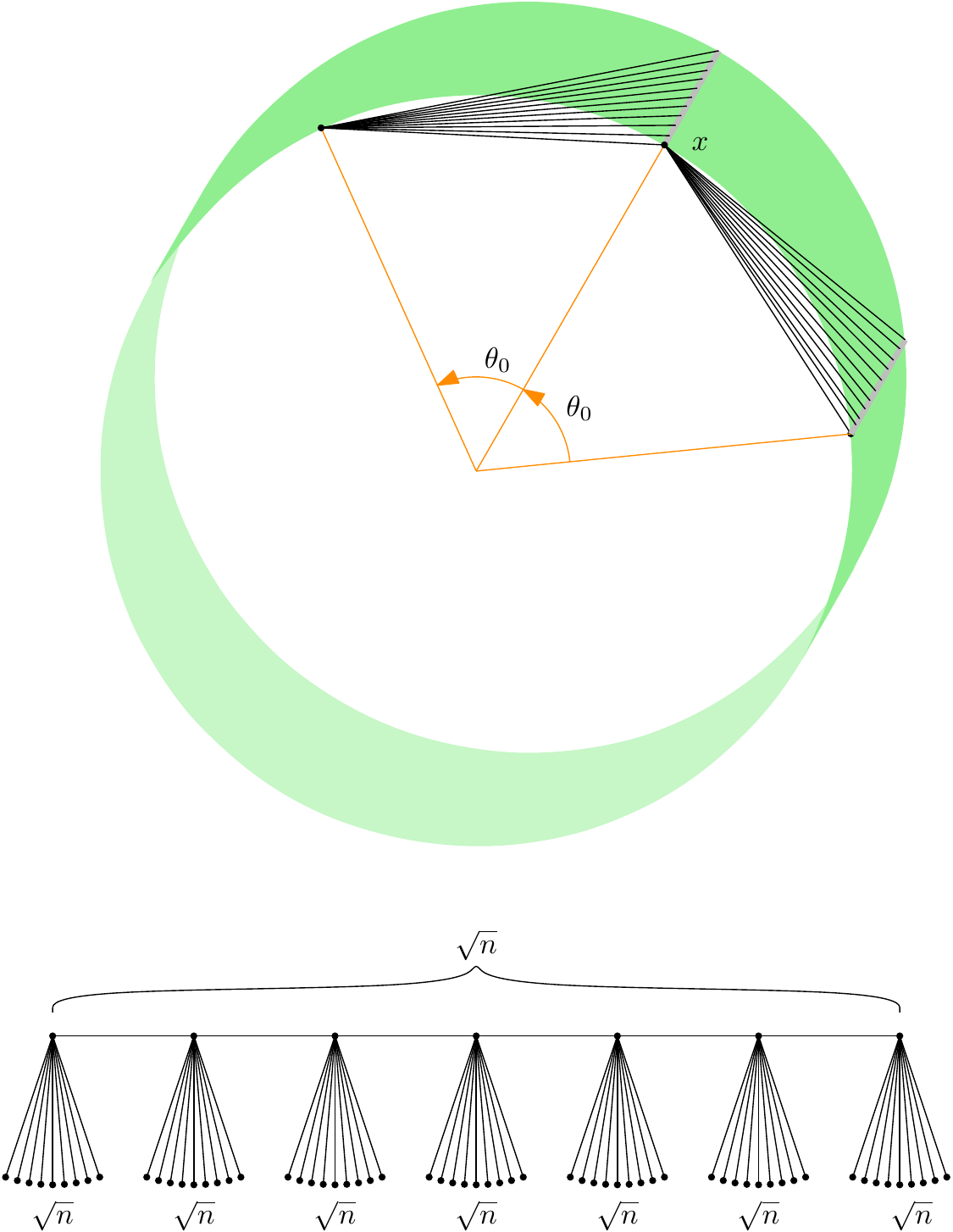}
	\caption{Modeling ${\rm FO}_1^{\rm local}$-limit of a $\sqrt{n}$-path of stars of order $\sqrt{n}$. Vertex $x$ is adjacent to $2^{\aleph_0}$ vertices on a segment and to two vertices obtained by rotation of $\pm\theta_0$, where $\theta_0$ is irrational to $\pi$. 
	}
	\label{fig:band}
\end{figure}

\subsection{Limit of $\rho$-non-dispersive Sequences of Trees}
Let $\sig$ be the signature of graphs, $\sig^\bullet$ the signature of graphs with additional unary relation $R$, 
 $\sig^+$ the signature of graphs with additional unary relations $R$ and $P$, $\sig^\omega$ the signature
of graphs with countably many additional unary relations $M_i$ and $N_i$ ($i\in\bbbn$). 
We consider two basic interpretation schemes, which we made use of already in \cite{NPOM1arxiv}:
\begin{enumerate}
  \item  $\mathsf I_{Y\rightarrow F}$ is a basic interpretation scheme 
  of $\sig^+$-structures in $\sig^\bullet$-structures defined as follows:
	for every $\sig$-structure $\mathbf A$, the domain of $\mathsf I_{Y\rightarrow F}(\mathbf A)$
	is the same as the domain of $\mathbf A$, and it holds (for every $x,y\in A$):
\begin{align*}
\mathsf I_{Y\rightarrow F}(\mathbf A)\models x\sim y&\quad\iff\quad\mathbf A\models (x\sim y)\wedge\neg R(x)\wedge\neg R(y)\\
\mathsf I_{Y\rightarrow F}(\mathbf A)\models R(x)&\quad\iff\quad\mathbf A\models (\exists z)\ R(z)\wedge (z\sim x)\\
\mathsf I_{Y\rightarrow F}(\mathbf A)\models P(x)&\quad\iff\quad\mathbf A\models R(x)
\end{align*}
In particular, 	if $Y$ is a $\sig^\bullet$-tree, with a single vertex marked by $R$ (the root),
 $\mathsf I_{Y\rightarrow F}$ maps $Y$ into a $\sig^+$-forest
$\mathsf I_{Y\rightarrow F}(Y)$, formed by
 the subtrees rooted at the sons of the former root (roots marked by $R$)
and a single vertex rooted principal component
(the former root, marked $P$);
\item $\mathsf I_{F\rightarrow Y}$ is a basic interpretation scheme 
  of $\sig^\bullet$-structures in $\sig^+$-structures defined as follows:
	for every $\sig^+$-structure $\mathbf A$, the domain of $\mathsf I_{F\rightarrow Y}(\mathbf A)$
	is the same as the domain of $\mathbf A$, and it holds (for every $x,y\in A$):
\begin{align*}
\mathsf I_{F\rightarrow A}(\mathbf A)\models x\sim y&\quad\iff\quad\mathbf A\models (x\sim y)\vee R(x)\wedge P(y)\vee R(y)\wedge P(x)\\
\mathsf I_{F\rightarrow A}(\mathbf A)\models R(x)&\quad\iff\quad\mathbf A\models P(x)
\end{align*}
In particular, 	$\mathsf I_{F\rightarrow Y}$ maps
a $\sig^+$-forest $F$ with all connected components rooted by $R$, except exactly one rooted by $P$
into a $\sig^\bullet$-tree
$\mathsf I_{F\rightarrow Y}(F)$ by making each non principal root a son of the principal root;
\end{enumerate}

\begin{lemma}
\label{lem:tree2}
Every  ${\rm FO}$-convergent $\rho$-non-dispersive sequence of rooted trees
$(Y_n)_{n\in\bbbn}$ has a modeling ${\rm FO}$-limit.
\end{lemma}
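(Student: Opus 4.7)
The plan is to reduce the existence of a modeling ${\rm FO}$-limit for $(Y_n,\rho_n)_{n\in\bbbn}$ to the already-established result that classes of colored rooted trees of bounded height admit modeling limits (recalled in the introduction from \cite{NPOM1arxiv}), using Lemma~\ref{lem:boundedh} as the bridge. Since ${\rm FO}$-convergence decomposes as elementary convergence plus ${\rm FO}^{\rm local}$-convergence, and a countable elementary limit $(M,\varrho)$ of $(Y_n,\rho_n)$ always exists by L\"owenheim--Skolem, Lemma~\ref{lem:mkfullc} tells us that it suffices to produce a connected modeling $(\mathbf L,\rho)$ that is a modeling ${\rm FO}^{\rm local}$-limit of $(Y_n,\rho_n)$. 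By Lemma~\ref{lem:boundedh}, this in turn reduces to exhibiting such $(\mathbf L,\rho)$ with the property that, for every $d\in\bbbn$, the ball $(B_d(\mathbf L,\rho),\rho)$ is a modeling ${\rm FO}^{\rm local}$-limit of $(B_d(Y_n,\rho_n),\rho_n)_{n\in\bbbn}$.

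For each fixed $d$, the rooted graph $B_d(Y_n,\rho_n)$ is a rooted tree of height at most $d$, so the bounded-height rooted tree result provides, after passing to a subsequence, a modeling ${\rm FO}$-limit $\mathbf L_d$ of the ball sequence at depth $d$. Using a diagonal extraction in the compact space of probability measures on the Stone space $S(\mathcal B({\rm FO}))$ (see the Remark following Corollary~\ref{cor:omega1}), one can use the same subsequence for every $d$ and select the $\mathbf L_d$ so that they form a coherent tower in the sense that $B_d(\mathbf L_{d'},\rho)$ agrees with $\mathbf L_d$ as a modeling, for every $d\leq d'$. This tower is then assembled into a single modeling $(\mathbf L,\rho)$ following the construction template of Lemma~\ref{lem:tree1}: the underlying standard Borel space is built from the Stone space of ${\rm FO}_1^{\rm local}$-types of rooted-tree vertices, decorated with an auxiliary continuous parameter (a Borel copy of $[0,1)$ and, for the infinitely-branching part, a copy of $\mathbb S^1$ equipped with an irrational rotation) chosen to realize the prescribed branching multiplicities and masses; the father map on types induces the arc relation, and the pushforward of the Haar and Borel measures supplies the probability measure $\nu_{\mathbf L}$.

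The principal difficulty lies in this coherent assembly step. While each $\mathbf L_d$ exists by a known result, ensuring that the ball-limits are compatible layer by layer and that the resulting inverse-limit object is a standard Borel relational sample space in which every first-order definable set is measurable is delicate. It is here that the explicit type-based construction of Lemma~\ref{lem:tree1} is essential: it must be adapted from the residual setting to the connected (rooted, non-dispersive) setting by performing the recursion strictly below the distinguished root $\rho$, with the continuous parameters serving both to duplicate types with the correct multiplicity and to guarantee that the resulting $\sigma$-algebra captures all first-order definable subsets. Once $(\mathbf L,\rho)$ is in hand, Lemma~\ref{lem:boundedh} yields the desired modeling ${\rm FO}^{\rm local}$-limit, and Lemma~\ref{lem:mkfullc} upgrades it to a modeling ${\rm FO}$-limit of $(Y_n,\rho_n)_{n\in\bbbn}$.
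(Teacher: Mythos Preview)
Your final architecture---reduce to a modeling ${\rm FO}^{\rm local}$-limit via Lemma~\ref{lem:mkfullc}, and establish that limit via Lemma~\ref{lem:boundedh} by controlling all depth-$d$ truncations---matches the paper's endgame. But the construction of the modeling $(\mathbf L,\rho)$ is genuinely different, and your version has a gap at exactly the place you flag as ``the principal difficulty.''

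The paper does \emph{not} take the bounded-height result from~\cite{NPOM1arxiv} as a black box and attempt to glue depth-$d$ limits into a tower. Instead it works top-down: it applies $\mathsf I_{Y\rightarrow F}$ to split off the root, invokes the Extended Comb Theorem (Theorem~\ref{thm:ecomb}) to decompose the resulting forest into a residual part plus countably many weighted non-dispersive pieces, and recurses on each non-dispersive piece. This recursion produces a countable skeleton tree $S$, with a residual sequence (hence, by Lemma~\ref{lem:tree1}, a residual modeling limit) attached to each node. Grafting these residual modelings onto $S$ with the recorded weights yields $\mathbf L$ directly; the depth-$d$ truncations $\mathbf L^{(d)}$ are then verified to be limits of $(Y_n^{(d)})$ by iterating Theorem~\ref{thm:convcomb} and $\mathsf I_{F\rightarrow Y}$ finitely many times. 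Crucially, Lemma~\ref{lem:tree1} is used only on residual sequences, where it is designed to apply.

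Your plan, by contrast, proposes to adapt the Stone-space construction of Lemma~\ref{lem:tree1} to the non-dispersive setting. That construction works for residual sequences precisely because ${\rm FO}^{\rm local}$-convergence collapses to ${\rm FO}_1^{\rm local}$-convergence there (Lemma~\ref{lem:resloc}), so the measure on $1$-types carries all the information. In a $\rho$-non-dispersive sequence this collapse fails: a single son of the root may carry a positive fraction of the mass, and the joint ${\rm FO}_p^{\rm local}$-statistics are not recoverable from the $1$-type distribution alone. The ``ancestor relations'' of Lemma~\ref{lem:npto1} encode exactly this extra hierarchical data, and it is the comb decomposition that organizes it. Your proposal to ``perform the recursion strictly below the distinguished root'' gestures at this but does not supply it; nor does the diagonal-extraction argument give coherence of the underlying modelings (it gives convergence of the associated measures on the Stone space, which is a weaker statement). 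As written, the assembly step is a restatement of the problem rather than a solution to it.
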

\begin{proof}
Let $(Y_n)_{n\in\bbbn}$ be an ${\rm FO}$-convergent $\rho$-non-dispersive 
sequence of rooted trees.
Then ${\mathsf I}_{Y\rightarrow F}(Y_n)_{n\in\bbbn}$ is an ${\rm FO}$-convergent sequence of rooted
forests. According to Theorem~\ref{thm:ecomb}, there exist $I\subseteq\bbbn$, reals $\alpha_i$,
sequences $(B_{i,n})_{n\in\bbbn}$ for $i\in I\cup\{0\}$, such that:
\begin{itemize}
	\item $\alpha_0\geq 0$, $\alpha_i>0$ (for $i\in I$), and $\sum_{i\in I\cup\{0\}}\alpha_i=1$;
	\item $Y_n$ is the union of the $B_{i,n}$ ($i\in I\cup\{0\}$);
	\item $\lim_{n\rightarrow\infty}|B_{i,n}|/|Y_n|=\alpha_i$ (for $i\in I\cup\{0\}$);
	\item $(B_{0,n})_{n\in\bbbn}$ is an ${\rm FO}^{\rm local}$-convergent residual sequence if $\alpha_0>0$;
	\item $(B_{i,n})_{n\in\bbbn}$ is an ${\rm FO}^{\rm local}$-convergent non-dispersive sequence;
	\item the family $\{(B_{i,n},\rho_{i,n})_{n\in\bbbn}: i\in I\}$ is uniformly elementarily convergent.
\end{itemize}
In this situation we apply Theorem~\ref{thm:convcomb}:

We denote by $\mathbf L_i$ and $\mathbf L_0$ the modeling limits, so that
\begin{itemize}
	\item   $(B_{i,n})\xrightarrow{{\rm FO}}\mathbf L_i$ (for $i\in\bbbn$);
\item $(B_{0,n})\xrightarrow{{\rm FO}}\mathbf L_0$ (if $\alpha_0>0$), and 
$(B_{0,n})\xrightarrow{{\rm FO}_0}\mathbf L_0$ (otherwise).
\end{itemize}
Then we have (by Theorem~\ref{thm:convcomb}):
$$Y_n\xrightarrow{{\rm FO}}\mathsf I_{F\rightarrow Y}\bigl(\coprod_{i\in I\cup\{0\}}(\mathbf L_i,\alpha_i)\bigr).$$

It is easily checked that each sequence $(B_{i,n})_{n\in\bbbn}$ is $\rho$-non-dispersive
(for $B_{i,n}$ rooted at its marked vertex), as a direct consequence
of the fact that $(Y_n)_{n\in\bbbn}$ (rooted at marked vertex) is $\rho$-non-dispersive.

If we repeat the same process on each $\rho$-non-dispersive sequence $(B_{i,n})_{n\in\bbbn}$ (for $i\in I\setminus\{0\}$),
 we inductively construct a countable rooted
tree $S$ and, associated to each node $v$ of the tree, a residual sequence of forests $(F_{v,n})_{n\in\bbbn}$ and
a weight $\lambda_v$. 
If we have started with a $\rho$-non-dispersive sequence, then for every $\epsilon>0$ there is integer $d$ such that
the sum of the measures of the residues attached to the nodes at height at most $d$ is at least $1-\epsilon$. 

According to Lemma~\ref{lem:tree1}, for each residual ${\rm FO}$-convergent sequence $(F_{v,n})_{n\in\bbbn}$ of forests
there is a rooted tree modeling $\mathbf L^0_v$ that is the ${\rm FO}_1^{\rm local}$-limit of $({\mathsf I}_{F\rightarrow Y}(F_{v,n}))_{n\in\bbbn}$. Hence, according to Lemmas~\ref{lem:mkres},~\ref{lem:mkclean}, and~\ref{lem:mkfull}, there is
a rooted tree modeling $\mathbf L_v$, which is the ${\rm FO}$-limit of $({\mathsf I}_{F\rightarrow Y}(F_{v,n}))_{n\in\bbbn}$.

The grafting
of the modelings $\mathbf L_v$ on the rooted tree $S$ (with weights $\lambda_v$) form a final modeling $\mathbf L$.

We prove that $\mathbf L$ is a relational sample space:
 each first-order
definable subset of $L^p$ is a $\mathcal L_{\omega_1\omega}$-definable subsets of the countable union of all the $\mathbf L_v$ in
which the roots of all the roots have been marked by distinct unary relations $M_v$. As the used language in countable, it follows
from Lemma~\ref{lem:omega} that $\mathcal L_{\omega_1\omega}$-definable subsets are Borel measurable. 

Let $d\in\bbbn$, and let $Y_n^{(d)}$ be the subtree of $Y_n$ induced by vertices at distance at most $d$ from the
root. As the trees $Y_n^{(d)}$ are obtained by an obvious interpretation, we get that $(Y_n^{(d)})_{n\in\bbbn}$ is ${\rm FO}$-convergent. Now consider $\mathbf L^{(d)}$, obtained from $\mathbf L$ by restricting to the set $X_d$ of the vertices at distance at most $d$ from the root. As the set $X_d$ is first-order defined, it is measurable. It follows that $X_d$ (with induced
$\sigma$-algebra) is a standard Borel space, and that $\mathbf L^{(d)}$ is a relational sample space.
We define a probability measure on $\mathbf L^{(d)}$ by $\nu_{\mathbf L^{(d)}}=\nu_{\mathbf L}/\nu_{\mathbf L}(X_d)$, thus
defining the modeling $\mathbf L^{(d)}$. By applying iteratively (at depth $d$) Theorem~\ref{thm:convcomb} and the interpretation
$\mathsf I_{F\rightarrow Y}$ we easily deduce that $\mathbf L^{(d)}$ is a modeling ${\rm FO}$-limit of $(Y_n^{(d)})_{n\in\bbbn}$.
Thus, according to Lemma~\ref{lem:boundedh}, we deduce that $\mathbf L$ is a modeling ${\rm FO}^{\rm local}$-limit of the sequence $(Y_n)_{n\in\bbbn}$.
By Lemma~\ref{lem:mkfullc}, we deduce that $(Y_n)_{n\in\bbbn}$ has a modeling ${\rm FO}$-limit, which is the union
of $\mathbf L$ and a countable graph.
\end{proof}

\begin{maintheorem}{2}
\label{thm:mtree}
Every ${\rm FO}$-convergent sequence of finite forests has a modeling ${\rm FO}$-limit  that satisfies the Strong Finitary Mass Transport Principle.
\end{maintheorem}
\begin{proof}
The theorem is an immediate consequence of Theorem~\ref{thm:main} and Lemmas~\ref{lem:tree1} and~\ref{lem:tree2}.
\end{proof}

\section{Conclusion}
Compared to the results obtained in~\cite{NPOM1arxiv} for rooted trees with bounded height, 
we do not have inverse theorem for tree-modeling. Our conjecture is that a tree-modeling that satisfy 
	the Strong Finitary Mass Transport Principle and whose complete theory has the finite model property is the modeling limit
	of a sequence of finite trees.

We believe that our approach can be used to obtain modeling limits of further classes of graphs.
In particular, we believe that the structure ``residual limits grafted on a countable skeleton'' might well
be universal for sequences of nowhere dense graphs.


\providecommand{\noopsort}[1]{}\providecommand{\noopsort}[1]{}
\providecommand{\bysame}{\leavevmode\hbox to3em{\hrulefill}\thinspace}
\providecommand{\MR}{\relax\ifhmode\unskip\space\fi MR }
\providecommand{\MRhref}[2]{%
  \href{http://www.ams.org/mathscinet-getitem?mr=#1}{#2}
}
\providecommand{\href}[2]{#2}

\end{document}